\documentclass[reqno]{amsart}
\usepackage{amssymb,amsmath,amsfonts,amscd,amsthm,pb-diagram}
\usepackage{amsbsy,bm}
\usepackage[usenames,dvipsnames]{color}
\usepackage[normalem]{ulem}
\usepackage{needspace}

\newtheorem{theorem}{Theorem}[section]

\newtheorem{lemma}{Lemma}[section]
\newtheorem{proposition}{Proposition}[section]

\newtheorem{remark}{Remark}[section]

\newtheorem{example}{Example}[section]
\newtheorem*{problem}{Problem}
\newtheorem*{theoremA}{Theorem A}
\newtheorem*{theoremB}{Theorem B}

\begin{document}
\title[Legendrian submanifolds with conformal Maslov form]
{Legendrian submanifolds in the unit sphere with conformal Maslov form and
constant sectional curvature}

\author{Yong Luo}
\address{
Mathematical Science Research Center,
Chongqing University of Technology, Chongqing 400054,
People's Republic of China}
\email{yongluo-math@cqut.edu.cn}

\author{Cheng Xing}
\address{
School of Mathematics and Statistics,
Henan Normal University, Xinxiang 453007,
People's Republic of China}
\email{xingcheng@nankai.edu.cn}

\thanks{2020 {\it Mathematics Subject Classification.}
Primary 53C24, Secondary 53C25, 53C40, 53C42}

\thanks{The project was supported by National Natural Science Foundation
of China (Grant Nos. 12671062 and 12501063), Postdoctoral Fellowship Program
of CPSF (Grant No. GZC20252036), China Postdoctoral Science Foundation (Grant
No. 2025M783098), and Natural Science Foundation of Henan Province (Grant Nos.
252300421303 and 262300421852)}

\thanks{C. Xing is the corresponding author}

\date{}

\keywords{Legendrian submanifold, constant sectional curvature, conformal
Maslov form, curvature pinching, rigidity result}

\begin{abstract}
This paper is concerned with the study on Legendrian submanifolds with
conformal Maslov form in the unit sphere $\mathbb{S}^{2n+1}$, which admits
a Sasakian structure $(\varphi,\xi,\eta,g)$ for $n\ge2$. As the main result,
we classify such submanifolds with constant sectional curvature, motivated by
the classification result of the minimal Legendrian submanifolds with constant
sectional curvature. Moreover, we prove that, for a closed Legendrian submanifold
$M^n$ in $\mathbb{S}^{2n+1}$ with conformal Maslov form, if its sectional curvature
satisfies the pinching $0\leq\sec_g\leq1$, then either $M^n$ is the totally geodesic
Legendrian sphere with $\sec_g=1$, or $M^n$ is a closed embedded weighted Clifford torus
with $\sec_g=0$. This extends the corresponding pinching theorem of Dillen--Vrancken
(J Math Pures Appl 69:85--93 1990) from minimal Legendrian submanifolds to the
conformal Maslov class under the same curvature bounds.
\end{abstract}

\maketitle
\numberwithin{equation}{section}

\section{Introduction}\label{sect:1}

Classification and pinching theorems for submanifolds in the unit sphere are two
classes of central and complementary themes in submanifold geometry. A classical
result of Cartan \cite{Car38} states that an isoparametric hypersurface with two
distinct principal curvatures is locally congruent to a product of two spheres.
On the rigidity side, Simons \cite{Sim68}, Lawson \cite{Law69} and Chern--do 
Carmo--Kobayashi \cite{CDK70} proved that a closed minimal submanifold $M^n\subset
\mathbb S^{n+p}$ satisfying $\|h\|^2\leq n/(2-1/p)$ is either totally geodesic or 
congruent to a minimal Clifford hypersurface or a Veronese surface, where $h$ is 
the second fundamental form and $p$ the codimension. Together, these results 
illustrate how geometric hypotheses and sharp curvature bounds can single out 
explicit submanifolds. For related classification and rigidity results in 
spheres, see \cite{AL15,Bre13,CCJ07,HLWZ20,Imm08,MN14} and the references therein.

As a real hypersurface of the complex Euclidean space $\mathbb C^{n+1}$, the
$(2n+1)$-dimensional unit sphere $\mathbb S^{2n+1}$ naturally carries a Sasakian
structure $(\varphi,\xi,\eta,g)$. An $m$-dimensional submanifold $M^m$ of $\mathbb
S^{2n+1}$ is said to be \textit{$C$-totally real}, or equivalently \textit{integral},
if the contact form vanishes on $M^m$, that is, $\eta|_{TM^m}=0$. When $m=n$, it
is called \textit{Legendrian}. This is the integral case of maximal dimension. The
Legendrian condition closely links intrinsic Riemannian geometry to the ambient contact
geometry and makes classification and rigidity problems particularly compelling. For
extensive studies, see {\cite{BBK95,But09,CD12,LW01,LLV20,Luo17,Luo18,LS22,LSY22,
Mih17,Sas14,SS20,WX26,YQ22} and the references therein.

The simplest Legendrian submanifold in $\mathbb S^{2n+1}$ is the totally geodesic
sphere, which is trivially minimal and has constant sectional curvature. This
naturally leads to the problem of finding and classifying minimal Legendrian
submanifolds that are not totally geodesic with constant sectional curvature.
A prominent example is the Clifford torus.

\begin{example}\label{exa:1.1}
Let $\mathbb S^1(r)$ be a circle of radius $r$ and denote the $n$-dimensional
torus by
\begin{equation}\label{eqn:1.1}
T^n=\mathbb S^1\big(\tfrac{1}{\sqrt{n+1}}\big)\times\cdots
\times\mathbb S^1\big(\tfrac{1}{\sqrt{n+1}}\big).
\end{equation}
Then the Clifford immersion $F_1(u):T^n\to\mathbb S^{2n+1}\subset\mathbb C^{n+1}$,
given by
\begin{equation}\label{eqn:1.2}
\begin{aligned}
F_1(u_1,\ldots,u_n)
=\tfrac{1}{\sqrt{n+1}}
(&\cos u_1,\sin u_1,\ldots,\cos u_n,\sin u_n,\\
&\cos(u_1+\cdots+u_n),-\sin(u_1+\cdots+u_n)),
\end{aligned}
\end{equation}
is a minimal Legendrian immersion with flat induced metric.
\end{example}

Combining the fundamental results of Yamaguchi--Kon--Ikawa \cite{YKI76} and
Cheng--He--Hu \cite{CHH21}, one obtains the following classification theorem
for minimal Legendrian submanifolds in the unit sphere $\mathbb S^{2n+1}$ with
constant sectional curvature.

\begin{theoremA}[\cite{CHH21,YKI76}]
Let $M^n$ $(n\geq2)$ be a minimal Legendrian submanifold in the unit sphere
$\mathbb S^{2n+1}$ with constant sectional curvature. Then $M^n$ is locally
congruent either to a totally geodesic Legendrian sphere or to the immersion
$F_1$ given in Example \ref{exa:1.1}.
\end{theoremA}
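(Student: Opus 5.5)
We prove Theorem A by combining the Gauss equation with the symmetry of the cubic form. Let $\sigma$ denote the second fundamental form. For a Legendrian $M^n\subset\mathbb S^{2n+1}$, the normal bundle splits as $\varphi(TM)\oplus\mathbb R\xi$, and the $\xi$-component of $\sigma$ vanishes. We set $h^k_{ij}=g(\sigma(e_i,e_j),\varphi e_k)$ in a local orthonormal frame; this is totally symmetric in $i,j,k$. Minimality means $\sum_i h^k_{ii}=0$ for every $k$. The Gauss equation with sectional curvature $c$ reads
\begin{equation*}
(c-1)(\delta_{il}\delta_{jk}-\delta_{ik}\delta_{jl})=\sum_m\big(h^m_{il}h^m_{jk}-h^m_{ik}h^m_{jl}\big).
\end{equation*}
Contracting twice and using minimality gives $n(n-1)(c-1)=-\|h\|^2$. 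Hence $c\le1$, and $c=1$ holds exactly when $h=0$, i.e.\ $M^n$ is the totally geodesic Legendrian sphere. From now on we assume $c<1$.

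\textbf{Step 1 (pinning down $c$).} Introduce the symmetric operators $A_k=(h^k_{ij})$. By total symmetry of $h$ they satisfy $A_ie_j=A_je_i$. The Gauss equation says that $R_0=(c-1)(\text{identity curvature tensor})$ is realized as $\sum_m A_m\wedge A_m$. The flat case $c=0$ is handled in Step 3 via flat coordinates. If $c\neq0$, the Codazzi equation $\nabla h$ totally symmetric applies, and $M^n$ is locally a space form, so Weyl-type rigidity holds. We pass to the ``Tsinghua principle'' route of Cheng--He--Hu. We compute $\Delta\|h\|^2$ by the Simons-type identity for Legendrian minimal submanifolds. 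Since $\|h\|^2$ is constant, this gives
\begin{equation*}
0=\|\nabla h\|^2+(n+1)\|h\|^2-\sum_{i,j}\|[A_i,A_j]\|^2-\sum_{i,j}(\operatorname{tr}A_iA_j)^2 .
\end{equation*}
Substituting the Gauss equation with constant $c$ makes the quartic terms explicit functions of $c$ and $\|h\|^2$. This is the classical argument of Yamaguchi--Kon--Ikawa. Combined with $\|h\|^2=n(n-1)(1-c)$, it forces either $h=0$ or $c=0$ with $\nabla h$ satisfying an algebraic constraint. The hardest part, and the content of \cite{CHH21}, is excluding $0<c<1$ and $c<0$ when the Simons identity alone leaves room (it does for general $n$). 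For this step I would instead use an algebraic approach: analyze the pointwise solutions of the Gauss equation $\sum_m A_m\wedge A_m=(c-1)I$ with $A_m$ totally symmetric and traceless. One diagonalizes $A_{e_1}$ at a point maximizing $h(v,v,v)$ on the unit sphere, as in Ejiri/Dillen--Vrancken. This yields eigenvalue relations $\lambda_1\lambda_j-\lambda_j^2=c-1$ for $j\ge2$ and $h(e_1,e_1,e_1)=\lambda_1$. An induction on dimension then splits off $e_1$ and reduces to a lower-dimensional cubic of the same type. Tracelessness then forces $c=0$, with $\lambda_1=n/\sqrt{n+1}$ up to normalization.

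\textbf{Step 2 ($\nabla h=0$).} Once $c=0$ is established, the induction gives a canonical orthonormal frame built from the maximal directions. Differentiating the relations and using Codazzi shows that this frame is parallel and that $h$ has constant components in it, so $\nabla h=0$.

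\textbf{Step 3 (identification).} With $c=0$ and $\nabla h=0$, take flat coordinates $u_1,\dots,u_n$. The structure equations $\partial_i\partial_jF=-\delta_{ij}F+\sum_k h^k_{ij}\,JF_k$ (with $JF_k=\varphi F_k$) form a constant-coefficient linear ODE system. Its solution is explicitly a sum of exponentials, and matching the constants gives $F_1$ up to a unitary motion of $\mathbb C^{n+1}$, by the uniqueness part of the fundamental theorem for submanifolds. I expect Step 1, excluding $c\notin\{0,1\}$ for general $n$, to be the main obstacle.
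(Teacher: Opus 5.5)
\textbf{There is a genuine gap in your Step 1, and it sits exactly at the case that matters, $c<0$.}

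\textbf{Why the pointwise route cannot work.} The pointwise algebraic analysis cannot determine $c$. Total symmetry, tracelessness and $[K_X,K_Y]=(c-1)X\wedge Y$ are all preserved under $K\mapsto sK$, $c-1\mapsto s^2(c-1)$. So the cubic form of the Clifford torus $F_1$, rescaled by $\sqrt{1-c}$, is an algebraic solution for every $c<1$. Your own induction shows this:
\begin{itemize}
\item tracelessness of $K_{e_1}$ and $\mu_1(\lambda_1-\mu_1)=c-1$ give $\lambda_1=(n-1)\sqrt{(1-c)/n}$ and $\mu_1=-\lambda_1/(n-1)$;
\item the projected cubic on $e_1^\perp$ is again traceless and satisfies the Gauss relation with constant $c-1-\mu_1^2$;
\item the recursion therefore closes for any $c<1$.
\end{itemize}
So no maximal-direction argument can force $c=0$, and your claim that tracelessness forces $c=0$ is false.

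\textbf{What your Simons identity actually gives.} Insert $\|h\|^2=n(n-1)(1-c)$, $\sum_{i,j}\|[A_i,A_j]\|^2=2n(n-1)(1-c)^2$ and $\operatorname{tr}(A_iA_j)=(n-1)(1-c)\delta_{ij}$. You get $\|\nabla K\|^2=-n(n^2-1)c(1-c)$. This is the case $T=0$ of \eqref{eqn:4.6} combined with \eqref{eqn:2.34}. Here $\nabla h$ must mean $\bar\nabla^\xi h=\varphi\nabla K$; the full $\bar\nabla h$ never vanishes unless $h=0$. This identity
\begin{itemize}
\item rules out $0<c<1$ at once, contrary to your claim that this range needs further work;
\item gives $\nabla K=0$ when $c=0$.
\end{itemize}
So Step 2 is unnecessary. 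That is fortunate, because a frame of maximizing directions is not canonical (the Clifford cubic has $n+1$ maximizers), and its parallelism is not justified. For $c<0$, however, the right-hand side is positive and nothing follows. Your proposal never actually handles this case.

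\textbf{The missing idea is a second-order identity.} In the paper, Theorem A is the case $T=\rho=0$ of Theorem \ref{thm:1.1}. The case $c<0$ is excluded by Lemmas \ref{lem:4.4}, \ref{lem:4.5} and Proposition \ref{pro:4.3}, as follows.
\begin{itemize}
\item Apply the connection Laplacian to $[K_X,K_Y]=(c-1)X\wedge Y$ and use $\Delta K=(n+1)cK$. This gives $\sum_a[(\nabla_{e_a}K)_X,(\nabla_{e_a}K)_Y]=-(n+1)c(c-1)X\wedge Y$.
\item Set $\mathcal A(U,X,Y)=[(\nabla_UK)_X,K_Y]$. It is fully symmetric by the differentiated Gauss equation, and $\sum_i\mathcal A(U,e_i,e_i)=0$ by minimality.
\item The commutator trace identity \eqref{eqn:4.81} then yields $\sum_{a,i}\|\mathcal A(e_a,e_i,X)\|^2=2(n^2-1)c(c-1)^2\|X\|^2$. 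This is negative for $c<0$ and $X\neq0$, a contradiction.
\end{itemize}
Your Step 3 is fine. It is the same constant-coefficient integration as in Theorem \ref{thm:5.1}, with minimality forcing equal weights as in Proposition \ref{pro:3.1}.
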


A natural generalization of submanifolds with constant sectional curvature
is given by submanifolds with parallel Ricci tensor, which include Einstein
submanifolds as a special case. In this direction, Hu--Li--Xing \cite{HLX22}
classified minimal Legendrian submanifolds in $\mathbb S^{2n+1}$ with parallel
Ricci tensor for $n=3,4$. From the viewpoint of submanifold geometry, it is also
natural to retain constant sectional curvature while relaxing minimality to a
geometric condition on the mean curvature vector. This leads to the following
problem.

\begin{problem}
Classify all Legendrian submanifolds in the unit sphere $\mathbb S^{2n+1}$
with conformal Maslov form and constant sectional curvature for $n\ge2$.
\end{problem}

Some facts relevant to the above problem are as follows. A Legendrian submanifold
with mean curvature vector $H$ is said to have \textit{conformal Maslov form} if
the tangent vector field $T:=-\varphi H$ is conformal, or equivalently, $\nabla_XT
=\rho X$ with $\rho=\tfrac1n\operatorname{div}T$ (cf. \cite{Cas96,CMU01,CU93,Pit05}).
This includes minimal submanifolds and those with $C$-parallel mean curvature vector,
for which $\nabla T=0$. See Section \ref{sect:2.2} for details. Cheng--Li--Hu
classified such submanifolds in dimension two \cite[Theorem 1.8]{CLH26} and
those with $C$-parallel mean curvature vector in dimensions $n\geq3$
\cite[Theorem 1.11]{CLH26}.

Motivated by these results, we solve the above problem in all dimensions
$n\geq2$. Before stating the first main theorem, we shall introduce another
Legendrian submanifold that is not totally geodesic.

\begin{example}\label{exa:1.2}
Let $\alpha_1,\ldots,\alpha_{n+1}$ be positive constants satisfying $\sum_{a=1}^{n+1}
\alpha_a^2=1$ and denote the $n$-dimensional linear space by
\begin{equation}\label{eqn:1.3}
V_\alpha=\Big\{x=(x_1,\ldots,x_{n+1})\in\mathbb R^{n+1}:
\sum_{a=1}^{n+1}\alpha_a^2x_a=0\Big\}.
\end{equation}
Then the weighted Clifford immersion $\Psi_\alpha(x):V_\alpha\to\mathbb S^{2n+1}
\subset\mathbb C^{n+1}$, given by
\begin{equation}\label{eqn:1.4}
\Psi_\alpha(x)=(\alpha_1e^{\mathrm{i}x_1},\ldots,\alpha_{n+1}e^{\mathrm{i}x_{n+1}}),
\end{equation}
is a Legendrian immersion with flat induced metric and $C$-parallel second
fundamental form. Moreover, it is minimal if and only if $\alpha_1=\cdots=\alpha_{n+1}
=1/\sqrt{n+1}$.
\end{example}

\begin{remark}\label{rem:1.1}
The Calabi product submanifolds in \cite[Examples 1.5 and 1.6]{CLH26} are
locally represented by $\Psi_\alpha$. The weights give a common description
of these submanifolds. When all weights are equal, $\Psi_\alpha$ reduces to
the Clifford torus in Example \ref{exa:1.1}.
\end{remark}

We can now state our first main theorem, which completely solves the above problem.

\begin{theorem}\label{thm:1.1}
Let $M^n$ $(n\geq2)$ be a Legendrian submanifold in the unit sphere $\mathbb
S^{2n+1}$ with conformal Maslov form and constant sectional curvature.
Then $M^n$ is locally congruent either to a totally geodesic Legendrian sphere or
to the immersion $\Psi_\alpha$ given in Example \ref{exa:1.2}.
\end{theorem}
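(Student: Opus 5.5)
We work throughout with the Gauss, Codazzi and Ricci equations for a Legendrian immersion into $\mathbb S^{2n+1}$. Recall that the cubic form $\langle h(X,Y),\varphi Z\rangle$ is totally symmetric. Put $h^k_{ij}=\langle h(e_i,e_j),\varphi e_k\rangle$ in a local orthonormal frame. Then the Gauss equation with constant sectional curvature $c$ reads
\begin{equation*}
\sum_m\big(h^m_{ik}h^m_{jl}-h^m_{il}h^m_{jk}\big)=(c-1)(\delta_{ik}\delta_{jl}-\delta_{il}\delta_{jk}),
\end{equation*}
and the Codazzi equation says that $\nabla h$ is totally symmetric.

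\textbf{Step 1: reduce to $c=1$ or $c=0$ with $h$ rigidly determined.} The flatness of the normal connection is inherited from the Gauss equation, since for Legendrians the normal curvature is identified with $R-(\text{ambient part})$. From the displayed equation the shape operators $A_k=(h^k_{ij})$ commute, so they can be diagonalized simultaneously away from a closed set.
\begin{itemize}
\item[(a)] If $c>1$, the quadratic forms cannot satisfy the equation (an Otsuki-type argument), so this case is excluded.
\item[(b)] If $c=1$, we must show $h=0$. The commuting diagonal forms $A_k=\mathrm{diag}(\lambda^k_1,\ldots,\lambda^k_n)$ satisfy $\sum_m\lambda^m_i\lambda^m_j=0$ for $i\ne j$. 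Total symmetry forces $h^k_{ij}=0$ unless $i=j=k$, so $h^i_{ii}=\mu_i$ with all else zero. The Gauss equation is then automatic, but Codazzi combined with the structure equations gives $e_j(\mu_i)$ and connection forms $\omega_{ij}(e_k)$ in terms of the $\mu$'s, and $c=1$ then forces $\mu_i=0$. (If this route is messy, use the Codazzi/Ricci identity directly.)
\item[(c)] If $c<1$, the same simultaneous diagonalization with total symmetry gives $h^k_{ij}=0$ unless $i=j=k$, so that $\lambda^k_i\lambda^k_j$-type sums equal $c-1$. Writing $h(e_i,e_i)=\mu_i\varphi e_i$ with $\langle h(e_i,e_j),\varphi e_k\rangle$ symmetric, the Gauss equation gives $\sum_m h^m_{ii}h^m_{jj}=c-1$ for $i\neq j$. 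The total symmetry constraint then yields the standard normal form $h(e_i,e_i)=\mu_i\varphi e_i+\sum_{k\ne i}\lambda_k\varphi e_k$, with orthogonal frames diagonalizing all components.
\end{itemize}

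\textbf{Step 2: use the conformal Maslov form.} We have $T=-\varphi H=-\frac1n\sum_k(\mathrm{tr}A_k)e_k$ and $\nabla T=\rho\,\mathrm{Id}$. Combining this with Codazzi in the adapted frame, we will show that the principal data $\lambda^k_i$ are constant along $M$ and that the frame is parallel, i.e.\ $\omega_{ij}=0$. Concretely, Codazzi gives $e_j(h^k_{ii})$ and $\omega_{ij}(e_k)(h^k_{ii}-h^k_{jj})$ relations. Taking traces and using $\nabla_XT=\rho X$, we get $\rho=0$ when $c=0$, and the off-diagonal derivative terms vanish. We must also prove $c\in\{0,1\}$ for $c<1$: the integrability condition $R(e_i,e_j)=c\,e_i\wedge e_j$ with a parallel (or nearly parallel) frame forces $c=0$. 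I expect this step to be the main obstacle. The Gauss equation alone allows many local solutions (for minimal submanifolds, Theorem A needed the deep work of \cite{CHH21}). The conformality $\nabla T=\rho X$ must be fed into the second Bianchi and Codazzi identities to kill the non-flat branches, most likely via an induction on the number of distinct values among the $\mu_i$, as in \cite{CHH21}.

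\textbf{Step 3: integrate.} With $c=0$, $\nabla h=0$ in the $C$-parallel sense and constant principal data, there is a flat parallel frame $e_1,\ldots,e_n$. Writing the immersion in $\mathbb C^{n+1}$ and solving the linear ODE system $\partial_i\partial_j F=\ldots$ with constant coefficients shows that $F$ splits into $n+1$ complex exponentials $\alpha_a e^{\mathrm i x_a}$. The Legendrian condition gives exactly $\sum\alpha_a^2x_a=0$, and $|F|=1$ gives $\sum\alpha_a^2=1$. Hence $M$ is locally congruent to $\Psi_\alpha$, while the case $c=1$, $h=0$ gives the totally geodesic sphere.
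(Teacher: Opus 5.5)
Your Step 1 rests on a false claim. The shape operators $A_{\varphi X}=K_X$ commute only when $c=1$. With constant sectional curvature $c$, the Gauss equation \eqref{eqn:2.26} reads $[K_X,K_Y]=(c-1)X\wedge Y$, and the Ricci equation \eqref{eqn:2.8} then gives $R^\perp(X,Y)\varphi Z=(c-1)\varphi\,(X\wedge Y)Z$. So the normal connection is flat precisely when $c=1$.

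Hence 1(c) is self-contradictory. If $h^k_{ij}=0$ unless $i=j=k$, then $\sum_m h^m_{ii}h^m_{jj}=0$ and $h^m_{ij}=0$ for $i\neq j$, which forces $\sec_g\equiv1$. The ``normal form'' you write next is not derived from anything.

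Step 1(a) also fails: $c>1$ cannot be excluded by a pointwise Otsuki-type argument, because $[K_X,K_Y]=(c-1)X\wedge Y$ has totally symmetric solutions with $c>1$. Already for $n=2$, where constancy of $\sec_g$ at a point is automatic, take $K^\flat_{111}=a$, $K^\flat_{122}=d$ and all other components (up to symmetry) zero. Then $\sec_g=1+d(a-d)>1$ for $0<d<a$. Finally, in 1(b) you never invoke the conformal Maslov hypothesis, yet it is exactly what is needed to kill $K$ there.

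The decisive part, Step 2, is not an argument. You leave open exactly what the theorem decides: which $c$ occur, and why $\rho=0$ when $c=0$. Tracing Codazzi only reproduces $\operatorname{div}K=n\nabla T$, which holds for every $\rho$, so it cannot give $\rho=0$. What is missing is second-order information from $\nabla T=\rho I$. Commuting derivatives gives $\nabla\rho=-cT$ and $\Delta T=-cT$ (Lemma~\ref{lem:4.1}). From these one gets $\Delta\tilde K=(n+1)c\tilde K$, a formula for $\|\tilde K\|^2$ in terms of $\|T\|^2$, and constancy of $\rho^2+c\|T\|^2$ (Proposition~\ref{pro:4.1}). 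The paper then splits by the sign of $c$:
\begin{itemize}
\item[(i)] For $c>0$ these yield $0=\|\nabla S\|^2+4(n+2)c\|S\|^2+2(n+1)^2c^2\|\tilde K\|^2$ with $S=\nabla\tilde K^\flat$ (Lemma~\ref{lem:4.2}). Hence $\tilde K=0$, then $T=0$, $K=0$ and $c=1$. This disposes of your (a), (b) and the range $0<c<1$ at once.
\item[(ii)] For $c=0$ one gets $\Delta K=0$ and $\|\nabla K\|^2=n^3\rho^2$ constant, so $P=\nabla K^\flat$ is parallel. Differentiating the Gauss equation twice gives $[P_{UX},P_{VY}]+[P_{VX},P_{UY}]=0$, whence $P=n\rho\sum_i(\omega^i)^4$. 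If $\rho\neq0$, all $K_{e_j}$ would be simultaneously diagonal, contradicting $[K_X,K_Y]=-X\wedge Y$.
\item[(iii)] For $c<0$ a separate obstruction is required. The paper uses $\|\mathcal A\|^2=-2n^2(c-1)c(n-1)H_{\mathcal L}<0$ for $\mathcal A(U,X,Y)=[P_{UX},K_Y]$ (Lemma~\ref{lem:4.5}).
\end{itemize}

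Your Step 3 (integration in a parallel frame into $n+1$ exponentials) is the right idea and matches Theorem~\ref{thm:5.1}. However, the constant-coefficient system there must be set up from the triangular normal form \eqref{eqn:5.9}, obtained by successively maximizing the cubic form, not from the diagonal form your Step 1 claims.
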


\begin{remark}\label{rem:1.2}
Theorem \ref{thm:1.1} extends Theorem A from minimality to the conformal Maslov
class. In the unit sphere, it also extends \cite[Theorem 1.8]{CLH26} to all dimensions
and removes the $C$-parallel mean curvature assumption in \cite[Theorem 1.11]{CLH26}.
\end{remark}

Moreover, the first pinching theorem of Dillen--Vrancken \cite{DV90} can be stated
as follows.

\begin{theoremB}[\cite{DV90}]
Let $M^n$ $(n\geq2)$ be a closed minimal Legendrian submanifold in the unit sphere
$\mathbb S^{2n+1}$. If its sectional curvature satisfies $0\leq\sec_g\leq1$, then
either $\sec_g=1$ or $\sec_g=0$ identically.
\end{theoremB}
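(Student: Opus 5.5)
The plan is to follow the classical Simons-type integral argument, adapted to the Legendrian setting, and then to use the curvature bounds a second time to pin down the sectional curvature. Let $\{e_1,\ldots,e_n\}$ be a local orthonormal frame on $M^n$, so that $\{\varphi e_1,\ldots,\varphi e_n,\xi\}$ spans the normal bundle. Legendrian submanifolds have $h$ orthogonal to $\xi$, so we may write $h^k_{ij}=g(h(e_i,e_j),\varphi e_k)$. For Legendrian submanifolds this cubic form is totally symmetric in $i,j,k$, and minimality means $\sum_i h^k_{ii}=0$. The Gauss equation reads
$$R_{ijkl}=\delta_{ik}\delta_{jl}-\delta_{il}\delta_{jk}+\sum_m\big(h^m_{ik}h^m_{jl}-h^m_{il}h^m_{jk}\big),$$
and the Codazzi equation makes $h^k_{ij,l}$ totally symmetric as well. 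This step relies on $\mathbb S^{2n+1}$ having constant $\varphi$-sectional curvature and on $\eta|_{TM}=0$.

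\textbf{Step 1: Simons identity.} I will compute $\tfrac12\Delta\|h\|^2=\|\nabla h\|^2+\sum h^k_{ij}\Delta h^k_{ij}$. Using the symmetry of $h^k_{ij,lm}$ and the Ricci identity, the Laplacian becomes $\Delta h^k_{ij}=\sum_l h^k_{ll,ij}+(\text{curvature terms})$, and the first term vanishes by minimality. Fix a point $p$ and an index $k$, and diagonalize the shape operator $A_{\varphi e_k}$ in a frame adapted to it, with eigenvalues $\lambda^k_i$. The curvature terms should then reorganize into the standard form
$$\tfrac12\Delta\|h\|^2=\|\nabla h\|^2+\sum_k\sum_{i<j}K_{ij}\,(\lambda^k_i-\lambda^k_j)^2 ,$$
where $K_{ij}$ is the sectional curvature of $\mathrm{span}\{e_i,e_j\}$. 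The frame depends on $k$, so this has to be done one $k$ at a time. Checking that the normal-curvature contributions combine exactly into this form, rather than leaving a sign-indefinite remainder, is the first technical point. Dillen--Vrancken circumvent it by an alternative route: take a maximum of $f(v)=g(h(v,v),\varphi v)$ on the unit tangent bundle and use the second-order maximum conditions. I will keep that approach as a fallback.

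\textbf{Step 2: integration.} Since $M^n$ is closed and $K_{ij}\geq0$, integrating the identity gives $\nabla h=0$. It also gives $K_{ij}(\lambda^k_i-\lambda^k_j)^2=0$ for all $i,j,k$ at every point. In particular $\|h\|^2$ is constant, and hence so is the scalar curvature $n(n-1)-\|h\|^2$.

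\textbf{Step 3: constancy of the sectional curvature.} This is the main obstacle. If $h\equiv0$, then $\sec_g=1$ and we are done. Otherwise, at a point $p$ take a unit vector $e_1$ maximizing $f$ on the unit sphere of $T_pM$. Then $h(e_1,e_1)=\lambda_1\varphi e_1$ with $\lambda_1>0$, and $e_1$ is an eigenvector of $A_{\varphi e_1}$. The relation $K_{ij}(\lambda^1_i-\lambda^1_j)^2=0$ shows that, whenever the eigenvalue on $e_j$ differs from $\lambda_1$, the plane $\{e_1,e_j\}$ is flat. The maximality of $f$ together with minimality forces such differing eigenvalues to exist. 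Next I will feed the parallel condition $\nabla h=0$, in the form $R\cdot h=0$, into the Gauss equation. The combination of $R\cdot h=0$, the upper bound $K\leq1$ and the constancy of $\|h\|^2$ should propagate flatness to every plane. Concretely, $R\cdot h=0$ applied to $(e_1,e_j)$ relates the eigenvalues of the various $A_{\varphi e_k}$, and $K\le1$ rules out the positively curved alternatives.

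This yields $\sec_g\equiv0$, so $M^n$ is a minimal Legendrian submanifold of constant sectional curvature. Theorem A then confirms that it is locally the Clifford torus $F_1$. An alternative way to finish Step 3 is to invoke the known classification of minimal Legendrian submanifolds with parallel second fundamental form and check which of them satisfy $0\le\sec_g\le1$. I expect this check to leave only the totally geodesic sphere and the Clifford torus.
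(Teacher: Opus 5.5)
Your Steps 1--2 have a genuine gap: the Simons identity you write down is false. Your expression only accounts for the curvature acting on the two tangential indices of $h^k_{ij}$. Because $g(h(X,Y),\varphi Z)$ is totally symmetric, the normal index $k$ is also acted on by intrinsic curvature, through the Ricci equation \eqref{eqn:2.8}. This contributes the extra term $\sum_k\sum_{i<j}K_{ij}(\lambda^k_i-\lambda^k_j)^2-\sum_k\operatorname{tr}(\operatorname{Ric}\circ A_{\varphi e_k}^2)$. Pairing \eqref{eqn:4.13} (with $T=0$) with $K=-\varphi h$ gives, in your notation,
\begin{equation*}
\tfrac12\Delta\|h\|^2=\|\bar\nabla^\xi h\|^2+2\sum_k\sum_{i<j}K_{ij}(\lambda^k_i-\lambda^k_j)^2-\sum_k\operatorname{tr}\bigl(\operatorname{Ric}\circ A_{\varphi e_k}^2\bigr).
\end{equation*}
As a check, for constant curvature $c$ the curvature part is $(n+1)c\|h\|^2$, in agreement with \eqref{eqn:4.6}; your formula gives $nc\|h\|^2$. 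The derivative must also be $\bar\nabla^\xi h$, because the full $\bar\nabla h$ carries the $\xi$-component $g(h(Y,Z),\varphi X)\xi$, which is nonzero unless $h=0$. When $\sec_g\ge0$ the last term is $\le0$, so integrating this form gives nothing. Worse, the pointwise conclusion of Step 2 is false under $\sec_g\ge0$ alone. The Legendrian lift of the minimal Lagrangian $SU(3)/SO(3)\subset\mathbb{CP}^5$ is closed, minimal and $C$-parallel, Einstein with constant $\rho_0>0$, and has $\sec_g\ge0$. For it the identity forces $2\sum_k\sum_{i<j}K_{ij}(\lambda^k_i-\lambda^k_j)^2=\rho_0\|h\|^2>0$. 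Your fallback, as described, only gives information at the point where $f$ is maximal, not $\bar\nabla^\xi h=0$ on all of $M^n$.

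The missing idea is the Ros--Urbano integration over the unit tangent bundle, which is the route the paper takes in Proposition \ref{pro:4.4} (for minimal $M^n$, $T=0$ and $\tilde K=K$). One integrates $\|\nabla^H\phi\|^2+\phi\Delta_H\phi$ for $\phi(v)=g(K(v,v),v)$, together with the fiber codifferential \eqref{eqn:4.97}--\eqref{eqn:4.99}. This turns the curvature term into $3\int_{UM^n}g(R(K(v,v),v)v,K(v,v))\,d\sigma$, which is termwise nonnegative for $\sec_g\ge0$, so $\nabla K=0$. Equivalently, $g(\Delta K,K)$ is a positive multiple of $\int_{U_pM^n}g(R(K(v,v),v)v,K(v,v))\,d\omega_p$; this is what rescues a Simons-type argument, and it cannot be seen by diagonalizing one $A_{\varphi e_k}$ at a time.

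Step 3 must then use only $R\cdot K=0$ and $\sec_g\le1$, as in Lemma \ref{lem:4.7}. At a maximizer $e_1$ with $K(e_1,e_i)=\mu_ie_i$, one has $(R(e_1,e_i)\cdot K)(e_1,e_1)=(2\mu_i-\mu_1)\sec_g(e_1,e_i)e_i=0$. The root $\mu_i=\mu_1/2$ would give $\sec_g(e_1,e_i)=1+\mu_1^2/4>1$. Hence these planes are flat and all $\mu_i$ equal the negative root. Then $(R(X,Y)\cdot K)(e_1,e_1)=\sqrt{\mu_1^2+4}\,R(X,Y)e_1=0$, and one inducts on $e_1^\perp$ with $R^r(X,Y)=c_r\,X\wedge Y+[K^r_X,K^r_Y]$, $c_r>1$. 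Your ``should propagate flatness to every plane'' is exactly this induction, and it has to be carried out. The alternative is to quote the full classification of $C$-parallel minimal Legendrian submanifolds and actually check $\sec_g\le1$ case by case.
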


Our second main theorem replaces minimality by the conformal Maslov form condition
and, in addition, determines the global submanifolds in both curvature alternatives.
We first describe the following compact flat Legendrian submanifold in $\mathbb
S^{2n+1}$.

\begin{example}\label{exa:1.3}
Let $\alpha=(\alpha_1,\ldots,\alpha_{n+1})$ be a positive integer vector and
put $\beta:=\sum_{a=1}^{n+1}\alpha_a$. Denote the $n$-dimensional torus by
\begin{equation}\label{eqn:1.5}
T^n_\alpha
=\Big\{(y_1,\ldots,y_{n+1})\in\mathbb S^1(1)\times\cdots\times\mathbb S^1(1):
\prod_{a=1}^{n+1}y_a^{\alpha_a}=1\Big\}.
\end{equation}
Then the weighted Clifford immersion $F_\alpha(y):T^n_\alpha\to\mathbb S^{2n+1}
\subset\mathbb C^{n+1}$, given by
\begin{equation}\label{eqn:1.6}
F_\alpha(y_1,\ldots,y_{n+1})
=(\sqrt{\tfrac{\alpha_1}{\beta}}y_1,\ldots,
\sqrt{\tfrac{\alpha_{n+1}}{\beta}}y_{n+1}),
\end{equation}
is a Legendrian embedding with flat induced metric and $C$-parallel second
fundamental form, whose image $F_\alpha(T^n_\alpha)$ is a  closed embedded
weighted Clifford Legendrian torus. The embedding is minimal if and only if
$\alpha_1=\cdots=\alpha_{n+1}=1$.
\end{example}

These compact Legendrian embeddings occur in the following pinching theorem.

\begin{theorem}\label{thm:1.2}
Let $M^n$ $(n\geq2)$ be a closed Legendrian submanifold in the unit sphere
$\mathbb S^{2n+1}$ with conformal Maslov form. If its sectional curvature
satisfies $0\leq\sec_g\leq1$, then one of the following occurs:
\begin{enumerate}
\item[$(a)$]
$M^n$ is a totally geodesic Legendrian sphere with $\sec_g=1$;

\item[$(b)$]
$M^n$ is a closed embedded weighted Clifford Legendrian torus with $\sec_g=0$.
\end{enumerate}
\end{theorem}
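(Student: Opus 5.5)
The plan is to reduce Theorem \ref{thm:1.2} to Theorem \ref{thm:1.1}: first show that the pinching $0\le\sec_g\le1$ forces $\sec_g\equiv1$ or $\sec_g\equiv0$, and then identify the closed submanifolds in each case. Following Dillen--Vrancken, I would use the Gauss equation for a Legendrian submanifold of $\mathbb S^{2n+1}$,
\[
K(X,Y)=1+g(h(X,X),h(Y,Y))-\|h(X,Y)\|^2 .
\]
Here the cubic form $C(X,Y,Z)=g(h(X,Y),\varphi Z)$ is totally symmetric, and the conformal Maslov condition reads $\nabla T=\rho\,\mathrm{Id}$ with $T=-\varphi H$. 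The first step is an integral formula. Because $M$ is closed, $\operatorname{div}T=n\rho$ gives $\int_M\rho=0$. The Codazzi equation gives $\operatorname{div}$-type identities for $C$, and from these I would derive a Simons-type formula for $\frac12\Delta\|h\|^2$, or alternatively the Dillen--Vrancken argument at a point where a suitable function $f(v)=C(v,v,v)$ on the unit tangent bundle attains its maximum. The terms coming from $H$ are controlled because $\nabla T$ is pure trace. In particular, $\nabla_X C(T,\cdot,\cdot)$-type terms reduce to $\rho\, C$ plus Codazzi contributions.

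Concretely, I would work at the maximum point $p$ and unit vector $e_1$ of $f$ on $UM$, following \cite{DV90}. There $h(e_1,e_1)=\lambda_1\varphi e_1$, and $e_1$ is an eigenvector of $A_{\varphi e_1}$. Second-order conditions, combined with the Ricci identity for $\nabla^2 C$, give inequalities of the form $\sum_j(\lambda_1-2\lambda_j)(1+\lambda_1\lambda_j-\lambda_j^2)\le(\text{terms in }\nabla^2 C)$. These are the sectional curvatures $K(e_1,e_j)=1+\lambda_1\lambda_j-\lambda_j^2$. In the minimal case the trace term vanishes. Here, by contrast, the trace $\sum_j\nabla^2_{e_1,e_1}C(e_j,e_j,\cdot)$ equals $n\,\nabla^2_{e_1,e_1}g(H,\cdot)$, which by the conformal Maslov condition becomes $n\,e_1(\rho)$-type terms paired with $e_1$. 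These do not vanish pointwise. This is the main obstacle. I would handle it by not maximizing a pointwise quantity, and instead integrating a Bochner identity for the vector field $T$: conformality gives $\operatorname{Ric}(T,T)$ in terms of $\rho$ and its gradient, namely $\nabla\rho=-\frac{1}{n-1}\operatorname{Ric}(T)$. Integration then yields $\int_M\big(\operatorname{Ric}(T,T)-\frac{n}{n-1}\dots\big)$, and together with $\sec_g\ge0$ this forces $\rho$ to be constant, hence $\rho=0$. So $T$ is parallel, i.e.\ $H$ is $C$-parallel.

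Once $\nabla T=0$ is known, $H$ is $C$-parallel, and the Dillen--Vrancken maximal-vector argument goes through with the extra $H$-terms now parallel. Combined with $0\le K\le1$, it should give that at each point either all $\lambda_j$ vanish ($\sec_g=1$) or $K(e_1,e_j)=0$ for all $j$, and an open–closed argument yields constant curvature $1$ or $0$. Theorem \ref{thm:1.1} then applies. In the case $\sec_g=1$ the submanifold is totally geodesic, and by closedness and simple connectivity of the image it is a great Legendrian sphere. In the flat case $M$ is locally $\Psi_\alpha$, so the immersion is the restriction of $x\mapsto(\alpha_ae^{\mathrm i x_a})$ to $V_\alpha$ modulo a lattice. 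Compactness of the image requires that $V_\alpha\cap2\pi\mathbb Z^{n+1}$ be a full lattice in $V_\alpha$. That forces the vector $(\alpha_a^2)$ to be proportional to an integer vector, i.e.\ $\alpha_a^2=\alpha'_a/\beta$ with positive integers $\alpha'_a$. Passing to the embedded image gives exactly $F_{\alpha'}(T^n_{\alpha'})$ of Example \ref{exa:1.3}.
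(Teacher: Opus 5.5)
\textbf{There is a genuine gap in your first step.} You claim that a Bochner identity for $T$ together with $\sec_g\geq0$ forces $\rho$ to be constant. That identity gives no such conclusion. From $\nabla T=\rho I$ one gets $R(X,Y)T=(X\rho)Y-(Y\rho)X$, hence $\operatorname{Ric}(T)=-(n-1)\nabla\rho$ (your formula). Integrating by parts,
\[
\int_{M^n}\operatorname{Ric}(T,T)\,dV=-(n-1)\int_{M^n}g(T,\nabla\rho)\,dV=(n-1)\int_{M^n}\rho\operatorname{div}T\,dV=n(n-1)\int_{M^n}\rho^2\,dV .
\]
With $\operatorname{Ric}\geq0$ both sides are nonnegative, so nothing follows. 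This identity kills $\rho$ only under $\operatorname{Ric}\leq0$, which is the opposite sign.

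No intrinsic argument of this type can work. The round unit sphere satisfies $0\leq\sec_g\leq1$ and carries the closed conformal field $T=\nabla f$, with $f$ a first eigenfunction; here $\nabla T=-fI$ and $\rho=-f$ is nonconstant. So $\rho$ can only be controlled through the extrinsic coupling $\operatorname{trace}K=nT$ (with $K=-\varphi h$ as in the paper). Your second stage presupposes $\nabla T=0$, so it never gets started.

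\textbf{How the paper gets around this.} It uses your own observation that $\nabla T$ is pure trace, but differently: it passes to the traceless part $\tilde K$ of \eqref{eqn:2.29}. By \eqref{eqn:2.31}, $\nabla\tilde K^\flat$ is totally symmetric and traceless. So the trace terms you named as the main obstacle vanish identically, without any information about $\rho$. The Ros--Urbano integral formula on the unit tangent bundle (Proposition~\ref{pro:4.4}) then gives $\nabla\tilde K=0$ from $\sec_g\geq0$.

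Only afterwards is $\rho$ eliminated (Lemma~\ref{lem:4.6}):
\begin{enumerate}
\item[(i)] If $\tilde K\neq0$, Lie-differentiating $\nabla\tilde K^\flat=0$ along $T$ gives $\tilde K(\nabla\rho,\cdot,\cdot)=0$ and then $\nabla\rho=0$. Hence $\rho=0$, since $\int\operatorname{div}T=0$, and so $\nabla K=0$.
\item[(ii)] If $\tilde K=0$, the Gauss equation gives $\sec_g(X,Y)=1+\tfrac{n^2}{(n+2)^2}\{\|T\|^2+g(T,X)^2+g(T,Y)^2\}$. Then $\sec_g\leq1$ forces $T=0$, hence $K=0$.
\end{enumerate}

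\textbf{Your second stage is also not sound as written.} Maximizing $g(K(v,v),v)$ on $UM^n$ yields $\sec_g(e_1,e_j)=0$ only at the single maximizing point, when the maximum is positive. That is not information at every point, so the open--closed argument has nothing to propagate. Even at that point it does not give flatness.

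What is needed is $\nabla K=0$ globally. This comes from Lemma~\ref{lem:4.6}, or from the integral formula applied to $K$ itself once $\nabla T=0$, since $\operatorname{div}K=n\nabla T$. It must then be followed by an algebraic argument such as Lemma~\ref{lem:4.7}: iterated maximization using $R\cdot K=0$ and $\sec_g\leq1$ to conclude totally geodesic or flat. Your final identification of the closed flat case through the period lattice $V_\alpha\cap2\pi\mathbb Z^{n+1}$ agrees with the paper's.
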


\begin{remark}\label{rem:1.3}
Cheng--Hu \cite{CH23} classified closed minimal Legendrian submanifolds in
$\mathbb S^{2n+1}$ with $\sec_g\geq0$ for $n\geq4$.
Theorem \ref{thm:1.2} replaces minimality by conformal Maslov form for $n\geq2$,
but requires the additional bound $\sec_g\leq1$.
\end{remark}

The rest of the paper is organized as follows. In Section \ref{sect:2}, we
first review the Sasakian structure of the unit sphere and the local theory
of Legendrian submanifolds, including the properties of the tensor $K$. In
Section \ref{sect:3}, we shall compute the geometric invariants of Example
\ref{exa:1.2} and describe its compact quotients. Section \ref{sect:4} collects
the properties needed for the proofs of the main results. Finally, Section
\ref{sect:5} completes the proofs of Theorems \ref{thm:1.1} and \ref{thm:1.2}.

\section{Preliminaries}\label{sect:2}

In this section, we first gather some essential material on Sasakian structure
$(\varphi,\xi,\eta,g)$ of the unit sphere $\mathbb{S}^{2n+1}$, which can be
realized as a Sasakian space form of constant $\varphi$-sectional curvature
$1$. We then briefly review the local theory of Legendrian submanifolds in
$\mathbb{S}^{2n+1}$, particularly regarding some properties of the tensor $K$.
For more details, we refer to \cite{CLH26,HLX22,LXY26} and the monograph
\cite{Bla10}. Throughout the paper, all the manifolds and submanifolds are
assumed to be connected, and closed means compact without boundary. Submanifolds
are understood to be embedded. The immersions are used through their local
embedded restrictions.

\subsection{Sasakian structure $(\varphi,\xi,\eta,g)$ of the unit sphere $\mathbb{S}^{2n+1}$}\label{sect:2.1}~

As a real hypersurface of the complex Euclidean space $\mathbb{C}^{n+1}$
with canonical complex structure $J$, the $(2n+1)$-dimensional unit sphere
$\mathbb{S}^{2n+1}$ naturally admits a Sasakian structure $(\varphi,\xi,\eta,
g)$, where $\xi=J\bar N$ is the structure vector field with the inward unit
normal vector field $\bar N$ of the inclusion $\mathbb{S}^{2n+1}\hookrightarrow
\mathbb{C}^{n+1}$, and $g$ denotes the induced metric on $\mathbb{S}^{2n+1}$.
Moreover, $\eta(X)=g(X,\xi)$ and $\varphi X=JX-\langle JX,\bar N\rangle\bar N$
for any tangent vector field $X$ on $\mathbb{S}^{2n+1}$, where $\langle\cdot,
\cdot\rangle$ denotes the real part of the standard Hermitian metric on
$\mathbb{C}^{n+1}$. In particular, for any tangent vector fields $X,Y$
on $\mathbb{S}^{2n+1}$, the Sasakian structure $(\varphi,\xi,\eta,g)$
of $\mathbb{S}^{2n+1}$ satisfies the following properties:
\begin{equation}\label{eqn:2.1}
\begin{cases}
g(\varphi X,\varphi Y)=g(X,Y)-\eta(X)\eta(Y),\\
\varphi\xi=0,\ \ \eta(\varphi X)=0,\ \ {\rm rank}\,(\varphi)=2n,\\
\varphi^2X=-X+\eta(X)\xi,\ \ d\eta(X,Y)=2g(X,\varphi Y),\\
\bar{\nabla}_{X}\xi=-\varphi X,\ \
(\bar{\nabla}_{X}\varphi)Y=g(X,Y)\xi-\eta(Y)X,
\end{cases}
\end{equation}
where $\bar{\nabla}$ is the Levi-Civita connection with respect to the induced
metric $g$ on $\mathbb{S}^{2n+1}$.

\subsection{Local theory of Legendrian submanifolds in the unit sphere $\mathbb{S}^{2n+1}$}\label{sect:2.2}~

Let $M^n$ be a Legendrian submanifold in the unit sphere $\mathbb{S}^{2n+1}$,
i.e., the contact form $\eta$ restricted to $M^n$ vanishes. Consequently,
$\xi$ is a normal vector field along $M^n$. Denote by $N$ a normal vector
field over $M^n$, and by $U,X,Y,Z$ the tangent vector fields on $M^n$ in
the subsequent paragraphs. Then we have the Gauss and Weingarten formulas:
\begin{equation}\label{eqn:2.2}
\bar\nabla_XY=\nabla_XY+h(X,Y),\ \
\bar\nabla_XN=-A_NX+\nabla_X^\perp N,
\end{equation}
where $\nabla$ is the Levi-Civita connection of the induced metric on $M^n$,
still denoted by $g$, $h$ (resp. $A_N$) is the second fundamental form (resp.
the shape operator with respect to $N$) of $M^n\to\mathbb{S}^{2n+1}$, and
$\nabla^\perp$ is the normal connection in the normal bundle $T^\perp M^n$.
Then, by means of \eqref{eqn:2.2} there holds
\begin{equation}\label{eqn:2.3}
g(h(X,Y),N)=g(A_NX,Y).
\end{equation}
Note from the facts $\eta(X)=0$ and $d\eta(X,Y)=2g(X,\varphi Y)$ that $\varphi$
maps the tangent vector fields of $M^n$ to the normal vector fields in $T^\perp
M^n$. Together with \eqref{eqn:2.1}, it satisfies
\begin{equation}\label{eqn:2.4}
\nabla^\perp_X\varphi Y=\varphi\nabla_XY+g(X,Y)\xi,\ \
A_{\varphi X}Y=-\varphi h(X,Y)=A_{\varphi Y}X,
\end{equation}
and thus $g(h(X,Y),\varphi Z)$ is totally symmetric in $X$, $Y$ and $Z$:
\begin{equation}\label{eqn:2.5}
g(h(X,Y),\varphi Z)=g(h(X,Z),\varphi Y)=g(h(Y,Z),\varphi X).
\end{equation}
The combination of \eqref{eqn:2.1}, \eqref{eqn:2.3} and the Weingarten
formula further implies that
\begin{equation}\label{eqn:2.6}
g(h(X,Y),\xi)=g(A_\xi X,Y)=0.
\end{equation}
In particular, we have $T^\perp M^n=\varphi(TM^n)\oplus\mathbb R\xi$.

Moreover, the equations of Gauss, Ricci and Codazzi are respectively
given by
\begin{gather}
R(X,Y)Z=g(Y,Z)X-g(X,Z)Y+[A_{\varphi X},A_{\varphi Y}]Z,
\label{eqn:2.7}\\
R^\perp(X,Y)\varphi Z=\varphi[A_{\varphi X},A_{\varphi Y}]Z,
\label{eqn:2.8}\\
(\bar\nabla h)(X,Y,Z)=(\bar\nabla h)(Y,X,Z),
\label{eqn:2.9}
\end{gather}
where
\begin{gather}
[A_{\varphi X},A_{\varphi Y}]
=A_{\varphi X}A_{\varphi Y}
-A_{\varphi Y}A_{\varphi X},\label{eqn:2.10}\\
R(X,Y)Z=\nabla_X\nabla_YZ-\nabla_Y\nabla_XZ
-\nabla_{[X,Y]}Z,\label{eqn:2.11}\\
R^\perp(X,Y)\varphi Z
=\nabla^\perp_X\nabla^\perp_Y\varphi Z
-\nabla^\perp_Y\nabla^\perp_X\varphi Z
-\nabla^\perp_{[X,Y]}\varphi Z,\label{eqn:2.12}\\
(\bar\nabla h)(X,Y,Z)
=\nabla^\perp_Xh(Y,Z)
-h(\nabla_XY,Z)-h(Y,\nabla_XZ).\label{eqn:2.13}
\end{gather}
Contracting the Gauss equation \eqref{eqn:2.7} twice, we obtain
\begin{equation}\label{eqn:2.14}
n(n-1)\kappa=n(n-1)+n^2\|H\|^2-\|h\|^2,
\end{equation}
where $\kappa$ is the normalized scalar curvature, $H:=\tfrac1n\operatorname
{trace} h$ is the mean curvature vector, and $\|\cdot\|^2$ denotes the squared
norm with respect to the metric $g$.

Following the usual notation for Legendrian submanifolds, we can define the
covariant derivative $\bar\nabla^\xi$ by
\begin{equation}\label{eqn:2.15}
(\bar\nabla^\xi h)(X,Y,Z)
:=(\bar\nabla h)(X,Y,Z)-g(h(Y,Z),\varphi X)\xi.
\end{equation}
With this definition, we say that $h$ is \textit{$C$-parallel} if $\bar\nabla^\xi
h=0$ on $M^n$. Similarly, the mean curvature vector is called \textit{$C$-parallel}
if it satisfies
\begin{equation}\label{eqn:2.16}
\nabla^\perp_XH=g(H,\varphi X)\xi.
\end{equation}
Set the tangent vector $T:=-\varphi H$, so that $H:=\varphi T$. According to
\eqref{eqn:2.4}, it follows that
\begin{equation}\label{eqn:2.17}
\nabla^\perp_XH=\varphi\nabla_XT+g(T,X)\xi.
\end{equation}
Consequently, $H$ is $C$-parallel if and only if $\nabla T=0$.

Furthermore, the one form dual to $T$ is given by
\begin{equation}\label{eqn:2.18}
\alpha_H(X):=g(T,X)=g(H,\varphi X).
\end{equation}
Using \eqref{eqn:2.5} and the Codazzi equation \eqref{eqn:2.9}, the tensor
$g((\bar\nabla h)(X,Y,Z),\varphi U)$ is totally symmetric. Taking its trace
gives $g(\nabla_XT,Y)=g(\nabla_YT,X)$, and hence
\begin{equation}\label{eqn:2.19}
d\alpha_H(X,Y):=g(\nabla_XT,Y)-g(\nabla_YT,X)=0.
\end{equation}
Following Piti\c{s} \cite{Pit05} (see also \cite{CLH26}), $M^n$ is said to have
\textit{conformal Maslov form} if and only if $T$ is a conformal vector field.
Since $\alpha_H$ is closed, this is equivalent to
\begin{equation}\label{eqn:2.20}
\nabla_XT=\rho X,\ \
\rho=\tfrac1n\operatorname{div}T.
\end{equation}
In particular, this class contains minimal submanifolds and submanifolds with
$C$-parallel mean curvature vector, for which $\rho=0$.

\subsection{Properties of the tensor $K$ on Legendrian submanifolds}\label{sect:2.3}~

On a Legendrian submanifold $M^n$ in $\mathbb{S}^{2n+1}$, we define the tensor
$K:=-\varphi h$ as in \cite{LXY26} and thus $h(X,Y)=\varphi K(X,Y)$ for $X,Y\in
TM^n$. Then the following properties are immediate.

\begin{lemma}[\cite{LXY26}]\label{lem:2.1}
For the tensor $K$ of the Legendrian submanifold $M^n$ in $\mathbb{S}^{2n+1}$,
\begin{enumerate}
\item[(1)]
$K_XY=K(X,Y)=A_{\varphi X}Y$ and $g(K(X,Y),Z)$ is totally symmetric;

\item[(2)]
$M^n$ is minimal if and only if $\operatorname{trace}K_X=0$ for any $X\in TM^n$;

\item[(3)]
The Codazzi equation for the tensor $K$ is given by
\begin{equation}\label{eqn:2.21}
(\nabla K)(X,Y,Z)=(\nabla K)(Y,X,Z);
\end{equation}

\item[(4)]
The Ricci identity for the tensor $K$ is given by
\begin{equation}\label{eqn:2.22}
\begin{aligned}
&(\nabla^2K)(U,X,Y,Z)-(\nabla^2K)(X,U,Y,Z)=(R\cdot K)(U,X,Y,Z)\\
&=R(U,X)K(Y,Z)-K(R(U,X)Y,Z)-K(Y,R(U,X)Z),
\end{aligned}
\end{equation}
where $(\nabla^2K)(U,X,Y,Z)=(\nabla_U(\nabla K))(X,Y,Z)$ for any $U,X,Y,Z\in TM^n$.
\end{enumerate}
\end{lemma}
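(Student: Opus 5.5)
The four items follow directly from the structure equations \eqref{eqn:2.4}--\eqref{eqn:2.13}, once we translate them through $h=\varphi K$. Throughout we use that $\varphi$ restricted to $TM^n$ is an isometry onto $\varphi(TM^n)$. This follows from \eqref{eqn:2.1} and $\eta|_{TM^n}=0$, which give $g(\varphi X,\varphi Y)=g(X,Y)$.

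For (1), apply $-\varphi$ to $h(X,Y)=\varphi K(X,Y)$. Since $\varphi^2=-\mathrm{id}$ on tangent vectors, we get $K(X,Y)=-\varphi h(X,Y)$, and by \eqref{eqn:2.4} this equals $A_{\varphi X}Y$. Total symmetry comes from
\[
g(K(X,Y),Z)=g(A_{\varphi X}Y,Z)=g(h(Y,Z),\varphi X),
\]
together with \eqref{eqn:2.5}.

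For (2), note first that $\xi$ has no share in the mean curvature, because \eqref{eqn:2.6} kills the $\xi$-component of $h$. Hence $nH=\varphi(\sum_i K(e_i,e_i))$, and $H=0$ if and only if $\sum_iK(e_i,e_i)=0$. Pairing with $X$ and using the symmetry from (1),
\[
g\Big(\sum_iK(e_i,e_i),X\Big)=\sum_ig(K_Xe_i,e_i)=\operatorname{trace}K_X,
\]
which gives the claim.

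For (3), the main step is to express $\bar\nabla h$ in terms of $\nabla K$. We differentiate $h(Y,Z)=\varphi K(Y,Z)$ using the first formula of \eqref{eqn:2.4}:
\[
\nabla^\perp_X h(Y,Z)=\varphi\nabla_X(K(Y,Z))+g(X,K(Y,Z))\xi .
\]
Substituting into \eqref{eqn:2.13} gives
\[
(\bar\nabla h)(X,Y,Z)=\varphi(\nabla K)(X,Y,Z)+g(K(Y,Z),X)\xi .
\]
The $\xi$-term $g(K(Y,Z),X)$ is totally symmetric by (1), so it is unchanged under $X\leftrightarrow Y$. The Codazzi equation \eqref{eqn:2.9} therefore reduces to $\varphi\big((\nabla K)(X,Y,Z)-(\nabla K)(Y,X,Z)\big)=0$. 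Since $\varphi$ is injective on $TM^n$, this yields \eqref{eqn:2.21}.

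For (4), we simply view $K$ as a $TM^n$-valued $(0,2)$-tensor on $(M^n,g)$. The displayed formula is then the standard Ricci commutation identity for the second covariant derivative of such a tensor. It follows by expanding $\nabla^2_{U,X}-\nabla^2_{X,U}=R(U,X)$ acting as a derivation on $K$, with the convention \eqref{eqn:2.11}. No ambient input is needed for this item.

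The only step requiring any care is the bookkeeping of the $\xi$-component in (3). It is cancelled exactly by the total symmetry established in (1), so (1) must be proved first.
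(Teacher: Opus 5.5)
Your proposal is correct and takes essentially the route the paper has in mind. The paper gives no proof of its own: it cites \cite{LXY26} and calls these properties immediate, and your argument is the direct computation from \eqref{eqn:2.4}--\eqref{eqn:2.13}. Your intermediate identity $(\bar\nabla h)(X,Y,Z)=\varphi(\nabla K)(X,Y,Z)+g(K(Y,Z),X)\xi$ is exactly the paper's \eqref{eqn:2.28} combined with \eqref{eqn:2.15}.
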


The covariant derivative of $K$ is defined by
\begin{equation}\label{eqn:2.23}
(\nabla K)(X,Y,Z)=\nabla_XK(Y,Z)
-K(\nabla_XY,Z)-K(Y,\nabla_XZ).
\end{equation}
In particular, the cubic form
\begin{equation}\label{eqn:2.24}
K^\flat(X,Y,Z):=g(K(X,Y),Z)
\end{equation}
and its covariant derivative are totally symmetric. Moreover,
\begin{equation}\label{eqn:2.25}
\operatorname{trace}K_X=ng(T,X),\ \
\operatorname{trace}K=nT,\ \  H=\varphi T.
\end{equation}
The Gauss equation \eqref{eqn:2.7} can be written as
\begin{equation}\label{eqn:2.26}
R(X,Y)Z=g(Y,Z)X-g(X,Z)Y+[K_X,K_Y]Z,
\end{equation}
where $[K_X,K_Y]=K_XK_Y-K_YK_X$. Since $\|K\|^2=\|h\|^2$ and $\|T\|
=\|H\|$, \eqref{eqn:2.14} gives
\begin{equation}\label{eqn:2.27}
n(n-1)\kappa=n(n-1)+n^2\|T\|^2-\|K\|^2.
\end{equation}
By means of \eqref{eqn:2.15}, it can be checked by direct
calculations that
\begin{equation}\label{eqn:2.28}
(\bar\nabla^\xi h)(X,Y,Z)=\varphi(\nabla K)(X,Y,Z).
\end{equation}
Consequently, $\nabla K=0$ if and only if $\bar\nabla^\xi h=0$.

We next introduce the traceless part of the tensor $K$. Define
\begin{equation}\label{eqn:2.29}
\begin{aligned}
\tilde{K}(X,Y):=K(X,Y)-\tfrac{n}{n+2}\{g(T,X)Y+g(T,Y)X+g(X,Y)T\}.
\end{aligned}
\end{equation}
By construction,
\begin{equation}\label{eqn:2.30}
\operatorname{trace}\tilde{K}_X=0,\ \ X\in TM^n.
\end{equation}
If $M^n$ has conformal Maslov form, differentiating \eqref{eqn:2.29} yields
\begin{equation}\label{eqn:2.31}
\begin{aligned}
g((\nabla_U\tilde K)(X,Y),Z)
=~&g((\nabla_UK)(X,Y),Z)-\tfrac{n\rho}{n+2}\{g(U,X)g(Y,Z)\\
~&+g(U,Y)g(X,Z)+g(U,Z)g(X,Y)\}.
\end{aligned}
\end{equation}
Thus $\tilde K^\flat$ and $\nabla\tilde K^\flat$ are totally symmetric and
traceless.

Finally, let $Q$ be a smooth tensor field of any fixed type on $M^n$. Its
second covariant derivative is defined by
\begin{equation}\label{eqn:2.32}
(\nabla^2Q)(X,Y):=\nabla_X(\nabla_YQ)-\nabla_{\nabla_XY}Q.
\end{equation}
If $\{E_i\}_{i=1}^n$ is a local orthonormal frame, the Laplacian of $Q$ is
the metric trace
\begin{equation}\label{eqn:2.33}
\Delta Q=\operatorname{trace}\nabla^2Q
=\sum_{i=1}^n(\nabla^2Q)(E_i,E_i).
\end{equation}
With the trace convention in \eqref{eqn:2.33}, the standard Bochner identity
reads (cf. \cite{Bes87})
\begin{equation}\label{eqn:2.34}
\tfrac12\Delta\|Q\|^2=\|\nabla Q\|^2+g(\Delta Q,Q).
\end{equation}

\section{Weighted Clifford Legendrian immersions}\label{sect:3}

In this section, we compute the geometric invariants of the weighted Clifford
Legendrian immersion in Example \ref{exa:1.2} and explain its relation to that
in Example \ref{exa:1.3}.

\begin{proposition}\label{pro:3.1}
For $n\geq2$, the immersion $\Psi_\alpha$ in Example \ref{exa:1.2} defines a
Legendrian submanifold in the unit sphere $\mathbb S^{2n+1}$ with flat induced
metric and $C$-parallel second fundamental form. In particular, it is minimal
if and only if $\alpha_1=\cdots=\alpha_{n+1}=1/\sqrt{n+1}$.
\end{proposition}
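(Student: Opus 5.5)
We verify each claim by direct computation in $\mathbb C^{n+1}$, using the coordinates $x\in V_\alpha$ as flat parameters.

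\emph{Step 1: the immersion is spherical and Legendrian.} Since $\sum_a\alpha_a^2=1$, we have $|\Psi_\alpha|=1$. Write $e_a$ for the standard basis of $\mathbb R^{n+1}$. For a tangent vector $v=(v_1,\ldots,v_{n+1})\in V_\alpha$,
\[
d\Psi_\alpha(v)=(\mathrm i\alpha_1v_1e^{\mathrm ix_1},\ldots,\mathrm i\alpha_{n+1}v_{n+1}e^{\mathrm ix_{n+1}}).
\]
At a point $\Psi$ of $\mathbb S^{2n+1}$ we have $\bar N=-\Psi$, so $\xi=J\bar N=-\mathrm i\Psi$. Then
\[
\eta(d\Psi_\alpha v)=\langle d\Psi_\alpha v,-\mathrm i\Psi_\alpha\rangle=-\sum_a\alpha_a^2v_a,
\]
which vanishes exactly because $v\in V_\alpha$. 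The induced metric is
\[
g(v,w)=\sum_a\alpha_a^2v_aw_a.
\]
This is a constant inner product on the linear space $V_\alpha$, so the metric is flat and the coordinate fields $\partial_v$ (for constant $v\in V_\alpha$) are parallel. Note also that $\Psi_\alpha$ is an immersion, since the $\alpha_a$ are positive.

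\emph{Step 2: compute $K$.} The Euclidean second derivative is
\[
D_vD_w\Psi=(-\alpha_av_aw_ae^{\mathrm ix_a})_a.
\]
Its component along $\Psi$ is $-\sum\alpha_a^2v_aw_a\,\Psi=-g(v,w)\Psi$, which is removed when passing from $\mathbb C^{n+1}$ to $\mathbb S^{2n+1}$. Its component along $\xi=-\mathrm i\Psi$ is $\langle\cdot,-\mathrm i\Psi\rangle=0$ (a real versus imaginary mismatch), consistent with \eqref{eqn:2.6}. Hence $h(v,w)$ is the remainder $D_vD_w\Psi+g(v,w)\Psi$, which lies in $\varphi(TM)$. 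Since $\varphi$ acts as $J$ on horizontal vectors, we get $\langle h(v,w),\varphi d\Psi(u)\rangle=\langle D_vD_w\Psi,\mathrm i\,d\Psi(u)\rangle$. This gives the cubic form
\[
K^\flat(u,v,w)=\sum_{a=1}^{n+1}\alpha_a^2u_av_aw_a\quad(\text{up to a global sign}),
\]
restricted to $V_\alpha$. The main care needed is in the sign conventions and in checking that the normal projection loses nothing.

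\emph{Step 3: $C$-parallelism.} The cubic form $K^\flat$ has constant coefficients in the flat coordinates, and these coordinates have parallel coordinate fields. Hence $\nabla K^\flat=0$, so $\nabla K=0$, and by \eqref{eqn:2.28} this gives $\bar\nabla^\xi h=0$.

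\emph{Step 4: minimality criterion.} By \eqref{eqn:2.25}, $ng(T,u)=\operatorname{trace}K_u$. To compute this trace we use the metric $g$ on $V_\alpha$. Introduce the weighted coordinates $y_a=\alpha_ax_a$, in which $g$ becomes the standard metric restricted to the hyperplane $P=\{y:\sum_a\alpha_ay_a=0\}$. Let $\nu=\alpha=(\alpha_1,\ldots,\alpha_{n+1})$, which is a unit normal to $P$ since $\sum\alpha_a^2=1$. In these coordinates
\[
K^\flat=\sum_a\alpha_a^{-1}y_a^3\quad\text{restricted to }P.
\]
The trace over $P$ of the bilinear form $K^\flat(u,\cdot,\cdot)$ is the trace over $\mathbb R^{n+1}$ minus its value on $(\nu,\nu)$:
\[
\operatorname{trace}K_u=\sum_a\alpha_a^{-1}u_a-\sum_a\alpha_a^{-1}u_a\alpha_a^2=\sum_a(\alpha_a^{-1}-\alpha_a)u_a .
\]
This vanishes for all $u\in P$ if and only if the vector $(\alpha_a^{-1}-\alpha_a)_a$ is proportional to $\alpha$, that is, $\alpha_a^{-1}-\alpha_a=c\alpha_a$ for some constant $c$. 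This forces $\alpha_a^2=1/(1+c)$ for every $a$, and then $\sum_a\alpha_a^2=1$ gives $\alpha_a=1/\sqrt{n+1}$. Conversely, equal weights satisfy this condition, which also recovers Example \ref{exa:1.1}.
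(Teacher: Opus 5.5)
Your proof is correct and is essentially the paper's computation done without choosing a basis. Your weighted coordinates $y_a=\alpha_ax_a$ on $P=\alpha^\perp$ are the paper's orthonormal coordinates $w_j$ (via $y=\sum_jw_jq_j$), so your cubic form $\sum_a\alpha_a^{-1}y_a^3$ and your minimality criterion, that $(\alpha_a^{-1})_a$ be proportional to $\alpha$, are exactly \eqref{eqn:3.8}--\eqref{eqn:3.9} and the paper's closing argument. The only real difference is that you get $C$-parallelism from $\nabla K^\flat=0$ together with \eqref{eqn:2.28}, whereas the paper computes $(\bar\nabla h)(E_i,E_j,E_k)$ directly in \eqref{eqn:3.10}; both are valid. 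Two small remarks: the sign of $K^\flat$ is in fact $+$, and the term $-\sum_a\alpha_au_a$ in your trace vanishes on $P$.
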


\begin{proof}
We introduce local coordinates $(u_1,\ldots,u_n)$ on $V_\alpha$ by
\begin{equation}\label{eqn:3.1}
x_j=u_j,\ \ 1\leq j\leq n,\ \
x_{n+1}=-\tfrac{1}{\alpha_{n+1}^2}\sum_{j=1}^n\alpha_j^2u_j.
\end{equation}
Let $\{c_a\}_{a=1}^{n+1}$ be the standard unit vectors of $\mathbb C^{n+1}$.
Direct differentiation yields
\begin{equation}\label{eqn:3.2}
\Psi_{\alpha,u_j}=\tfrac{\partial\Psi_\alpha}{\partial u_j}
=\mathrm{i}\alpha_je^{\mathrm{i}u_j}c_j
-\mathrm{i}\tfrac{\alpha_j^2}{\alpha_{n+1}}
e^{\mathrm{i}x_{n+1}}c_{n+1},\ \ 1\leq j\leq n.
\end{equation}
Regard $\alpha$ as a unit vector in $\mathbb R^{n+1}$. Choose an
orthonormal basis $\{q_j\}_{j=1}^n$ of $\alpha^\perp$ and write
$q_j=(q_{1j},\ldots,q_{n+1,j})$. Then
$\{q_1,\ldots,q_n,\alpha\}$ is an orthonormal basis of
$\mathbb R^{n+1}$. Define linear coordinates $(w_1,\ldots,w_n)$ on
$V_\alpha$ by
\begin{equation}\label{eqn:3.3}
x_a=\tfrac{1}{\alpha_a}\sum_{j=1}^nq_{aj}w_j,
\ \ 1\leq a\leq n+1.
\end{equation}
Indeed, $\sum_{a=1}^{n+1}\alpha_a^2x_a=0$ follows from $\sum_{a=1}^{n+1}
\alpha_aq_{aj}=0$. Applying \eqref{eqn:3.2} we deduce that
\begin{equation}\label{eqn:3.4}
\begin{aligned}
E_j:=\Psi_{\alpha,w_j}
=\sum_{k=1}^n\tfrac{q_{kj}}{\alpha_k}\Psi_{\alpha,u_k}
=\mathrm{i}\sum_{a=1}^{n+1}q_{aj}e^{\mathrm{i}x_a}c_a,
\ \ 1\leq j\leq n,
\end{aligned}
\end{equation}
form an orthonormal frame. More precisely, direct calculations show that
\begin{equation}\label{eqn:3.5}
\langle E_j,E_k\rangle=\delta_{jk},\ \
\langle E_j,\Psi_\alpha\rangle
=\langle E_j,\mathrm{i}\Psi_\alpha\rangle
=\langle E_j,\mathrm{i}E_k\rangle=0,
\ \ 1\leq j,k\leq n,
\end{equation}
where $\langle\cdot,\cdot\rangle$ is the standard Euclidean inner product
on $\mathbb C^{n+1}\cong\mathbb R^{2n+2}$. Consequently, $\Psi_\alpha$
is a Legendrian immersion into $\mathbb S^{2n+1}$ and
$\varphi E_j=\mathrm{i}E_j$.

Moreover, writing
$\Psi_{\alpha,w_jw_k}=\partial^2\Psi_\alpha/\partial w_j\partial w_k$ for
$1\leq j,k\leq n$, by differentiation and the Gauss formula, we obtain
\begin{equation}\label{eqn:3.6}
\begin{aligned}
\Psi_{\alpha,w_jw_k}
=-\sum_{a=1}^{n+1}\tfrac{q_{aj}q_{ak}}{\alpha_a}
e^{\mathrm{i}x_a}c_a
=\nabla_{E_j}E_k+h(E_j,E_k)-\delta_{jk}\Psi_\alpha
\end{aligned}
\end{equation}
for $1\leq j,k\leq n$. It then follows that
\begin{equation}\label{eqn:3.7}
\nabla_{E_j}E_k=0,\ \ 1\leq j,k\leq n,
\end{equation}
which implies that $\Psi_\alpha$ has flat induced metric, and for its second
fundamental form $h$,
\begin{equation}\label{eqn:3.8}
\begin{aligned}
h(E_j,E_k)
=\sum_{\ell=1}^nh^{\ell*}_{jk}\varphi E_\ell,\ \
h^{\ell*}_{jk}:=g(h(E_j,E_k),\varphi E_\ell)
=\sum_{a=1}^{n+1}\tfrac{q_{aj}q_{ak}q_{a\ell}}{\alpha_a}.
\end{aligned}
\end{equation}
As $\{q_1,\ldots,q_n,\alpha\}$ is an orthonormal basis of $\mathbb R^{n+1}$,
$\sum_{j=1}^nq_{aj}^2=1-\alpha_a^2$. Consequently, the mean curvature vector
$H$ of $\Psi_\alpha$ satisfies
\begin{equation}\label{eqn:3.9}
\begin{aligned}
nH=\sum_{j=1}^nh(E_j,E_j)
=\sum_{\ell=1}^n
\bigl(\sum_{a=1}^{n+1}\tfrac{q_{a\ell}}{\alpha_a}\bigr)
\varphi E_\ell.
\end{aligned}
\end{equation}
Together with \eqref{eqn:3.7} and \eqref{eqn:3.8}, we see from \eqref{eqn:2.4}
that $\nabla^\perp_{E_i}\varphi E_\ell=\delta_{i\ell}\xi$ and hence
\begin{equation}\label{eqn:3.10}
\begin{aligned}
(\bar\nabla h)(E_i,E_j,E_k)
=g(h(E_j,E_k),\varphi E_i)\xi,\ \ 1\leq i,j,k\leq n.
\end{aligned}
\end{equation}
Thus $\bar\nabla^\xi h=0$ because of \eqref{eqn:2.15}, that is, $\Psi_\alpha$
has $C$-parallel second fundamental form.

Finally, put $v:=(\alpha_1^{-1},\ldots,\alpha_{n+1}^{-1})$. Based on \eqref{eqn:3.9},
$H=0$ if and only if $v$ is orthogonal to every $q_\ell$, and hence if and only
if $v=c\alpha$ for some positive constant $c$. Thus $c\alpha_a^2=1$ for $1\leq
a\leq n+1$. Summing these identities gives $c=n+1$, and hence $\alpha_1=\cdots
=\alpha_{n+1}=1/\sqrt{n+1}$. The converse follows immediately from \eqref{eqn:3.9}.
For these equal weights, \eqref{eqn:3.1} reduces to $x_{n+1}=-(u_1+\cdots+u_n)$,
and $\Psi_\alpha$ becomes the immersion $F_1$ in Example \ref{exa:1.1}. This
completes the proof of Proposition \ref{pro:3.1}.
\end{proof}

\begin{remark}\label{rem:3.1}
To distinguish the continuous weights in Example \ref{exa:1.2} from the integer
parameter in Example \ref{exa:1.3}, denote the latter here by $\gamma=(\gamma_1,
\ldots,\gamma_{n+1})$. The immersion $\Psi_\alpha$ descends to a closed quotient
if and only if its period group $P_\alpha=2\pi\mathbb Z^{n+1}\cap V_\alpha$ has
rank $n$. This is equivalent to the existence of a primitive positive integer
vector $\gamma$ such that $\alpha_a^2=\gamma_a/\sum_{b=1}^{n+1}\gamma_b$ for
$1\leq a\leq n+1$. Exponentiation identifies the canonical quotient $V_\alpha
/P_\alpha$ with $T^n_\gamma$, and the descended map is precisely the embedding
$F_\gamma$ in Example \ref{exa:1.3}. For the descended map to be an embedding,
the quotient must be taken by the full period group $P_\alpha$. Therefore, the
closed embedded submanifolds are exactly those in Example \ref{exa:1.3}.
\end{remark}

\section{Legendrian submanifolds with conformal Maslov form}\label{sect:4}

In this section, we establish the properties needed for the proofs of the main
theorems. We treat constant sectional curvature, distinguishing the positive, flat
and negative cases, and then obtain the integral and curvature identities for the
closed pinching problem.

\subsection{Conformal Maslov form and constant sectional curvature}\label{sect:4.1}~

We obtain the differential consequences derived from the conformal Maslov equation.

\begin{lemma}\label{lem:4.1}
Let $M^n$ $(n\geq2)$ be a Legendrian submanifold in the unit sphere $\mathbb S^{2n+1}$
with conformal Maslov form and constant sectional curvature $c$. Then
\begin{equation}\label{eqn:4.1}
\nabla \rho=-cT,\ \
\operatorname{Hess}\rho=-c\rho g,\ \
\Delta T=-cT.
\end{equation}
\end{lemma}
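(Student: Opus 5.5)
The plan is to commute covariant derivatives on the vector field $T$ and use the constant-curvature form of the Riemann tensor, $R(X,Y)Z=c\{g(Y,Z)X-g(X,Z)Y\}$.

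\emph{Step 1: the gradient of $\rho$.} The conformal Maslov condition \eqref{eqn:2.20} gives $\nabla_YT=\rho Y$. Differentiating once more,
\[
\nabla_X\nabla_YT-\nabla_{\nabla_XY}T=(X\rho)Y+\rho\nabla_XY-\rho\nabla_XY=(X\rho)Y .
\]
Antisymmetrizing in $X,Y$ gives
\[
R(X,Y)T=(X\rho)Y-(Y\rho)X .
\]
Inserting the constant curvature expression yields
\[
c\{g(Y,T)X-g(X,T)Y\}=(X\rho)Y-(Y\rho)X .
\]
Now take the trace over $X$ with $Y$ fixed: the left side gives $c(n-1)g(Y,T)$ and the right side gives $(Y\rho)-n(Y\rho)=-(n-1)(Y\rho)$. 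Since $n\ge2$, it follows that $Y\rho=-c\,g(T,Y)$, that is, $\nabla\rho=-cT$. This trace step, and keeping the sign conventions of \eqref{eqn:2.11} straight in it, is the only delicate point.

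\emph{Step 2: the Hessian of $\rho$.} Differentiate $\nabla\rho=-cT$ using the fact that $c$ is constant:
\[
\operatorname{Hess}\rho(X,Y)=g(\nabla_X\nabla\rho,Y)=-c\,g(\nabla_XT,Y)=-c\rho\,g(X,Y).
\]

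\emph{Step 3: the Laplacian of $T$.} Use \eqref{eqn:2.32} and \eqref{eqn:2.33} together with the identity $(\nabla^2T)(X,Y)=(X\rho)Y$ from Step 1:
\[
\Delta T=\sum_i(E_i\rho)E_i=\nabla\rho=-cT .
\]
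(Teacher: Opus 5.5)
Your proof is correct and follows the paper's argument. You commute covariant derivatives in $\nabla_YT=\rho Y$, compare the result with the constant-curvature form of $R(X,Y)T$, and then differentiate and trace $(\nabla^2T)(X,Y)=(X\rho)Y$. The only cosmetic difference is how you extract $d\rho=-cT^\flat$: you trace over $X$, whereas the paper writes the identity as $(d\rho+cT^\flat)(X)Y=(d\rho+cT^\flat)(Y)X$ and uses linear independence of $X,Y$ for $n\geq2$.
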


\begin{proof}
Commuting covariant derivatives in the conformal Maslov equation \eqref{eqn:2.20}
and using the constant curvature formula for $R(X,Y)T$, we deduce
that
\begin{equation}\label{eqn:4.2}
X(\rho)Y-Y(\rho)X=c\{g(Y,T)X-g(X,T)Y\},
\end{equation}
or equivalently,
\begin{equation}\label{eqn:4.3}
(d\rho+cT^\flat)(X)Y=(d\rho+cT^\flat)(Y)X,\ \ X,Y\in TM^n.
\end{equation}
This implies that the one form $(d\rho+cT^\flat)$ vanishes for $n\ge2$, and
therefore $\nabla \rho=-cT$. By differentiating this identity and using
\eqref{eqn:2.20}, we further have
\begin{equation}\label{eqn:4.4}
\operatorname{Hess}\rho(X,Y)
=\nabla_X(\nabla_Y\rho)
-(\nabla_XY)(\rho)
=-c\rho g(X,Y),
\end{equation}
which proves the second identity in \eqref{eqn:4.1}. Moreover, differentiation
of \eqref{eqn:2.20} results in
\begin{equation}\label{eqn:4.5}
(\nabla^2T)(X,Y)
=\nabla_X(\nabla_YT)
-\nabla_{\nabla_XY}T
=X(\rho)Y.
\end{equation}
Taking the metric trace and using the first identity in \eqref{eqn:4.1}, we
obtain $\Delta T=\nabla \rho=-cT$. This proves the third identity and completes
the proof of Lemma \ref{lem:4.1}.
\end{proof}

Recall the traceless tensor $\tilde K$ from \eqref{eqn:2.29}. The following
proposition collects the basic Laplacian and norm identities for $K$ and
$\tilde K$.

\begin{proposition}\label{pro:4.1}
Let $M^n$ $(n\geq2)$ be a Legendrian submanifold in the unit sphere $\mathbb
S^{2n+1}$ with conformal Maslov form and constant sectional curvature $c$. Then
\begin{equation}\label{eqn:4.6}
\begin{aligned}
(\Delta K)(X,Y)=(n+1)c K(X,Y)-nc\{
g(T,X)Y+g(T,Y)X+g(X,Y)T\}.
\end{aligned}
\end{equation}
In particular, we have
\begin{equation}\label{eqn:4.7}
\Delta\tilde{K}=(n+1)c\tilde{K}.
\end{equation}
Moreover, it holds that
\begin{equation}\label{eqn:4.8}
\|\tilde{K}\|^2=\tfrac{n^2(n-1)}{n+2}\|T\|^2
+n(n-1)(1-c).
\end{equation}
Furthermore, the invariant
\begin{equation}\label{eqn:4.9}
\rho^2+c\|T\|^2
\end{equation}
is constant.
\end{proposition}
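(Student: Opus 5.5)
The plan is to derive \eqref{eqn:4.6} from a Simons-type commutation argument, deduce \eqref{eqn:4.7} by subtracting the trace part, obtain \eqref{eqn:4.8} from the traced Gauss equation, and check \eqref{eqn:4.9} with Lemma \ref{lem:4.1}. Throughout, $R(X,Y)Z=c\{g(Y,Z)X-g(X,Z)Y\}$ and $\{E_i\}$ is a local orthonormal frame. Write
\[
S(X,Y):=g(T,X)Y+g(T,Y)X+g(X,Y)T,
\]
so that $\tilde K=K-\tfrac{n}{n+2}S$.

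For \eqref{eqn:4.6}, I start from $(\Delta K)(X,Y)=\sum_i(\nabla^2K)(E_i,E_i,X,Y)$. By the Codazzi equation \eqref{eqn:2.21}, the first two slots of $\nabla K$ may be swapped, so this equals $\sum_i(\nabla^2K)(E_i,X,E_i,Y)$. The Ricci identity \eqref{eqn:2.22} then gives
\[
\sum_i(\nabla^2K)(X,E_i,E_i,Y)+\sum_i(R\cdot K)(E_i,X,E_i,Y).
\]
\begin{enumerate}
\item[(i)] For the first sum, total symmetry of $\nabla K$ together with \eqref{eqn:2.25} gives $\sum_i(\nabla K)(Z,E_i,E_i)=n\nabla_ZT=n\rho Z$. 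Differentiating in $X$ and using $\nabla\rho=-cT$ from Lemma \ref{lem:4.1} yields $-nc\,g(T,X)Y$.
\item[(ii)] For the curvature sum, I expand the three terms of $R\cdot K$ with the constant-curvature formula and \eqref{eqn:2.25}. They give respectively $c\{K(X,Y)-ng(T,Y)X\}$, $c(n-1)K(X,Y)$, and $c\{K(X,Y)-ng(X,Y)T\}$.
\end{enumerate}
Adding (i) and (ii) gives exactly \eqref{eqn:4.6}. This bookkeeping of signs and contractions is the main (though routine) obstacle.

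For \eqref{eqn:4.7}, since $g$ is parallel and $\Delta T=-cT$ by Lemma \ref{lem:4.1}, we get $\Delta S=-cS$. Therefore
\[
\Delta\tilde K=(n+1)cK-ncS+\tfrac{n}{n+2}cS=(n+1)cK-\tfrac{n(n+1)}{n+2}cS=(n+1)c\tilde K.
\]

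For \eqref{eqn:4.8}, I contract the Gauss equation \eqref{eqn:2.26} with $\operatorname{Ric}=(n-1)c\,g$. Using $\operatorname{trace}K_X=ng(T,X)$, this gives
\[
\textstyle\sum_iK_{E_i}^2=nK_T+(n-1)(1-c)\,\mathrm{Id}.
\]
Taking the trace yields $\|K\|^2=n^2\|T\|^2+n(n-1)(1-c)$, which is consistent with \eqref{eqn:2.27} since $\kappa=c$. Then I expand
\[
\|\tilde K\|^2=\|K\|^2-\tfrac{2n}{n+2}\langle K,S\rangle+\tfrac{n^2}{(n+2)^2}\|S\|^2 .
\]
Direct frame computations give $\langle K,S\rangle=3n\|T\|^2$ and $\|S\|^2=3(n+2)\|T\|^2$. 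Hence the $T$-coefficient is $n^2\bigl(1-\tfrac{3}{n+2}\bigr)=\tfrac{n^2(n-1)}{n+2}$, as claimed.

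Finally, for \eqref{eqn:4.9}, Lemma \ref{lem:4.1} and \eqref{eqn:2.20} give
\[
X(\rho^2+c\|T\|^2)=2\rho X(\rho)+2cg(\nabla_XT,T)=-2c\rho g(T,X)+2c\rho g(X,T)=0.
\]
Since $M^n$ is connected, $\rho^2+c\|T\|^2$ is constant.
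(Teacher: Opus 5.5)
Your proposal is correct and follows essentially the same route as the paper: Codazzi to move the trace into the Ricci identity, the three constant-curvature contractions together with $n(\nabla^2T)(X,Y)=-nc\,g(T,X)Y$ for \eqref{eqn:4.6}, and $\Delta\Phi=-c\Phi$ for \eqref{eqn:4.7}. Likewise you use $\langle K,\Phi\rangle=3n\|T\|^2$ and $\|\Phi\|^2=3(n+2)\|T\|^2$ with the traced Gauss equation for \eqref{eqn:4.8}, and $d\rho=-cT^\flat$, $d\|T\|^2=2\rho T^\flat$ for \eqref{eqn:4.9}; the only cosmetic difference is that you obtain $\|K\|^2$ by tracing $\sum_iK_{E_i}^2=nK_T+(n-1)(1-c)\,\mathrm{Id}$ rather than quoting \eqref{eqn:2.27} with $\kappa=c$.
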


\begin{proof}
Define the divergence of $K$ by
\begin{equation}\label{eqn:4.10}
(\operatorname{div}K)(X)
=\sum^n_{i=1}(\nabla_{E_i}K)(E_i,X)
=n\nabla_XT,
\end{equation}
where the last equality follows from the full symmetry of $g((\nabla_UK)(X,Y),
Z)$. Lower the upper index of $K$ and write
\begin{equation}\label{eqn:4.11}
K^\flat(X,Y,Z):=g(K(X,Y),Z).
\end{equation}
At a fixed point, we choose a local normal $g$-orthonormal frame $\{E_i\}_{i=1}^n$
and the full symmetry of $\nabla K^\flat$ gives
\begin{equation}\label{eqn:4.12}
(\Delta K)(X,Y)
=\sum_{i=1}^n(\nabla^2K)(E_i,E_i,X,Y)
=\sum_{i=1}^n(\nabla^2K)(E_i,X,E_i,Y).
\end{equation}
Applying the Ricci identity \eqref{eqn:2.22} we obtain
\begin{equation}\label{eqn:4.13}
\begin{aligned}
g((\Delta K)(X,Y),Z)
=~&\sum_{i=1}^ng((\nabla^2K)(X,E_i,E_i,Y),Z)
-\sum_{i=1}^nK^\flat(R(E_i,X)E_i,Y,Z)\\
&-\sum_{i=1}^nK^\flat(E_i,R(E_i,X)Y,Z)
-\sum_{i=1}^nK^\flat(E_i,Y,R(E_i,X)Z).
\end{aligned}
\end{equation}
Differentiating \eqref{eqn:4.10} shows that the first term on the right side
is
\begin{equation}\label{eqn:4.14}
\sum_{i=1}^ng((\nabla^2K)(X,E_i,E_i,Y),Z)
=n g((\nabla^2T)(X,Y),Z).
\end{equation}
Since $(M^n,g)$ has constant sectional curvature $c$,
\begin{equation}\label{eqn:4.15}
\begin{aligned}
-\sum_{i=1}^nK^\flat(R(E_i,X)E_i,Y,Z)
=(n-1)cK^\flat(X,Y,Z).
\end{aligned}
\end{equation}
The contractions in the second and third arguments are, respectively,
\begin{equation}\label{eqn:4.16}
\begin{aligned}
&-\sum_{i=1}^nK^\flat(E_i,R(E_i,X)Y,Z)
=cK^\flat(X,Y,Z)-nc\,g(X,Y)T^\flat(Z),\\
&-\sum_{i=1}^nK^\flat(E_i,Y,R(E_i,X)Z)
=cK^\flat(X,Y,Z)-nc\,g(X,Z)T^\flat(Y).
\end{aligned}
\end{equation}
Here we used the fact $\sum_{i=1}^nK^\flat(E_i,E_i,\cdot)=nT^\flat$ and the
total symmetry of $K^\flat$. Therefore, by combining the preceding identities
we have
\begin{equation}\label{eqn:4.17}
\begin{aligned}
(\Delta K)(X,Y)
=n(\nabla^2T)(X,Y)+(n+1)cK(X,Y)
-nc\{g(T,Y)X+g(X,Y)T\}.
\end{aligned}
\end{equation}
Moreover, we deduce from \eqref{eqn:4.1} and \eqref{eqn:4.5} that
\begin{equation}\label{eqn:4.18}
(\nabla^2T)(X,Y)
=g(\nabla \rho,X)Y
=-cg(T,X)Y.
\end{equation}
Substituting \eqref{eqn:4.18} into \eqref{eqn:4.17} immediately proves
\eqref{eqn:4.6}.

Define the tensor $\Phi$ by
\begin{equation}\label{eqn:4.19}
\Phi(X,Y):=g(T,X)Y+g(T,Y)X+g(X,Y)T.
\end{equation}
Since $\nabla g=0$, its first covariant derivative satisfies
\begin{equation}\label{eqn:4.20}
\begin{aligned}
(\nabla_U\Phi)(X,Y)
=g(\nabla_UT,X)Y+g(\nabla_UT,Y)X+g(X,Y)\nabla_UT.
\end{aligned}
\end{equation}
It follows that
\begin{equation}\label{eqn:4.21}
\begin{aligned}
(\Delta\Phi)(X,Y)
&=g(\Delta T,X)Y+g(\Delta T,Y)X+g(X,Y)\Delta T\\
&=-c\{g(T,X)Y+g(T,Y)X+g(X,Y)T\}=-c\Phi(X,Y),
\end{aligned}
\end{equation}
where the second line follows from $\Delta T=-cT$. We then check that
\begin{equation}\label{eqn:4.22}
\begin{aligned}
\Delta\tilde K
&=\Delta K-\tfrac{n}{n+2}\Delta\Phi\\
&=(n+1)cK-nc\Phi+\tfrac{nc}{n+2}\Phi\\
&=(n+1)c(K-\tfrac{n}{n+2}\Phi)
=(n+1)c\tilde K.
\end{aligned}
\end{equation}
This verifies the assertion in \eqref{eqn:4.7}.

We next calculate the norm of the traceless part. Fix a point and an
orthonormal basis $\{e_i\}_{i=1}^n$. By the definition of $\Phi$,
\begin{equation}\label{eqn:4.23}
\Phi(e_i,e_j)=g(T,e_i)e_j+g(T,e_j)e_i+g(e_i,e_j)T.
\end{equation}
The trace identity $\sum^n_{j=1}K(e_j,e_j)=nT$ and the total symmetry
show that
\begin{equation}\label{eqn:4.24}
\begin{aligned}
g(K,\Phi)
&=\sum^n_{i,j=1}g(K(e_i,e_j),\Phi(e_i,e_j))\\
&=2\sum^n_{i=1}g(T,e_i)\sum^n_{j=1}g(K(e_i,e_j),e_j)
+\sum^n_{i=1}g(K(e_i,e_i),T)=3n\|T\|^2.
\end{aligned}
\end{equation}
For the norm of $\Phi$, the squares of its three summands contribute
$3n\|T\|^2$, whereas their three pairwise cross terms contribute
$6\|T\|^2$. Consequently,
\begin{equation}\label{eqn:4.25}
\|\Phi\|^2=3(n+2)\|T\|^2.
\end{equation}
Combining these identities, by definition we have
\begin{equation}\label{eqn:4.26}
\begin{aligned}
\|\tilde K\|^2
&=\|K\|^2-\tfrac{2n}{n+2}g(K,\Phi)
+\tfrac{n^2}{(n+2)^2}\|\Phi\|^2\\
&=\|K\|^2-\tfrac{3n^2}{n+2}\|T\|^2.
\end{aligned}
\end{equation}
As the normalized scalar curvature is $c$, \eqref{eqn:2.27} reduces to
$\|K\|^2=n^2\|T\|^2+n(n-1)(1-c)$. Substitution into the last line of
\eqref{eqn:4.26} proves \eqref{eqn:4.8}.

It remains to establish the asserted constancy. It is seen from \eqref{eqn:2.20}
and \eqref{eqn:4.1} that
\begin{equation}\label{eqn:4.27}
d\rho=-c T^\flat,\ \ d\|T\|^2=2\rho T^\flat.
\end{equation}
Hence the quantity in \eqref{eqn:4.9} is constant, which completes the
proof of Proposition \ref{pro:4.1}.
\end{proof}

\subsection{Submanifolds with constant sectional curvature $c>0$}\label{sect:4.2}~

For $c>0$, the argument rests on an identity of Simons' type for the traceless
cubic form associated with $\tilde K$. As stated above, we write
\begin{equation}\label{eqn:4.28}
\tilde K^\flat(X,Y,Z):=g(\tilde K(X,Y),Z).
\end{equation}

\begin{lemma}\label{lem:4.2}
Let $M^n$ $(n\geq2)$ be a Legendrian submanifold in the unit sphere $\mathbb
S^{2n+1}$ with conformal Maslov form and constant sectional curvature $c$.
Put $S:=\nabla\tilde K^\flat$. Then
\begin{equation}\label{eqn:4.29}
0=\|\nabla S\|^2+4(n+2)c\|S\|^2
+2(n+1)^2c^2\|\tilde{K}\|^2.
\end{equation}
\end{lemma}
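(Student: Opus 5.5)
The plan is to combine the Bochner identity \eqref{eqn:2.34} for $S=\nabla\tilde K^\flat$ with two scalar facts: the fact that $\|\tilde K\|^2$ and $\|S\|^2$ are both affine functions of $f:=\|T\|^2$, and an explicit formula for $\Delta S$ coming from \eqref{eqn:4.7}.

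\emph{Step 1: scalar functions.} By Proposition \ref{pro:4.1}, $C:=\rho^2+cf$ is constant, and $d f=2\rho T^\flat$ by \eqref{eqn:4.27}. Using $\nabla T=\rho\,\mathrm{id}$ and $\Delta T=-cT$ from Lemma \ref{lem:4.1}, the Bochner identity for $T$ gives
\[
\Delta f=2\|\nabla T\|^2+2g(\Delta T,T)=2n\rho^2-2cf=2nC-2(n+1)cf .
\]
So $\Delta$ maps affine functions $A+Bf$ to affine functions, and $\Delta(A+Bf)=-2(n+1)cB\,f+\mathrm{const}$. By \eqref{eqn:4.8}, $\|\tilde K\|^2$ is affine in $f$. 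Bochner for $\tilde K$ together with \eqref{eqn:4.7} gives
\[
\|S\|^2=\tfrac12\Delta\|\tilde K\|^2-(n+1)c\|\tilde K\|^2 .
\]
Hence $\|S\|^2$ is also affine in $f$. Its $f$-coefficient is $-2(n+1)c$ times that of $\|\tilde K\|^2$, so $\|S\|^2+2(n+1)c\|\tilde K\|^2$ is constant. Applying $\Delta$ and substituting the previous display again yields
\[
\tfrac12\Delta\|S\|^2=-2(n+1)c\|S\|^2-2(n+1)^2c^2\|\tilde K\|^2 .
\]

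\emph{Step 2: $\Delta S$.} Next I will compute $\Delta S=\Delta\nabla\tilde K^\flat$ by commuting derivatives, i.e.\ applying the Ricci identity twice as in the proof of Proposition \ref{pro:4.1}. In constant curvature $c$ the commutator $\Delta\nabla Q-\nabla\Delta Q$ for the symmetric $3$-tensor $Q=\tilde K^\flat$ consists of three kinds of terms:
\begin{enumerate}
\item[(i)] a Ricci term $(n-1)c\,S$;
\item[(ii)] terms of the form $2c$ times $S$ with two slots interchanged, one for each of the three slots of $Q$;
\item[(iii)] terms involving traces of $S$ or the divergence of $\tilde K$.
\end{enumerate}
Since $S$ is totally symmetric and traceless by \eqref{eqn:2.31}, the terms in (iii) vanish, and each term in (ii) collapses to $2c\,S$. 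Combining this with $\nabla\Delta\tilde K^\flat=(n+1)cS$ from \eqref{eqn:4.7}, I expect
\[
\Delta S=(n+1)cS+(n+5)cS=2(n+3)cS ,
\]
so that $g(\Delta S,S)=2(n+3)c\|S\|^2$.

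\emph{Step 3: conclusion.} Insert Steps 1 and 2 into $\tfrac12\Delta\|S\|^2=\|\nabla S\|^2+g(\Delta S,S)$. The $\|S\|^2$ terms combine as $2(n+3)c+2(n+1)c=4(n+2)c$, which gives exactly \eqref{eqn:4.29}.

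The main obstacle is Step 2. Getting the commutator coefficients right requires careful bookkeeping of the Ricci identity in each slot, including the derivative slot. If the count does not come out to $2(n+3)$, the fallback is to verify directly that the trace terms vanish by \eqref{eqn:2.30}, and to recount the contributions using the form $R(X,Y)Z=c\{g(Y,Z)X-g(X,Z)Y\}$.
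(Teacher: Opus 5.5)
Your proposal is correct and follows essentially the same route as the paper. Both arguments have three ingredients. The first is the Simons-type formula $\Delta S=2(n+3)cS$, obtained by applying the Ricci identity twice; your count $(n-1)c+3\cdot 2c$ for the commutator agrees with the paper's $(n+2)c+3c$ in \eqref{eqn:4.32}--\eqref{eqn:4.33}. The second is the constancy of $\|S\|^2+2(n+1)c\|\tilde K\|^2$, which comes from the Bochner identity for $\tilde K$ together with the constancy of $\rho^2+c\|T\|^2$. The third is a final Bochner identity for $S$. The only difference is cosmetic: you obtain that constancy by tracking the $\|T\|^2$-coefficient of affine expressions, whereas the paper writes out the explicit formulas \eqref{eqn:4.39}--\eqref{eqn:4.40}.
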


\begin{proof}
We first derive the Simons formula for $S$. Combining \eqref{eqn:2.20} and
\eqref{eqn:2.21}, we conclude from \eqref{eqn:2.29} and \eqref{eqn:2.30} that
$S$ is totally symmetric and traceless. Fix $p\in M^n$ and choose a local
orthonormal frame $\{E_i\}_{i=1}^n$ satisfying $\nabla E_i=0$ at $p$. Extend
$U,X,Y,Z$ locally so that their covariant derivatives vanish at $p$. Applying
the Ricci identity twice and noting the fact $\nabla R=0$, we obtain
\begin{equation}\label{eqn:4.30}
\begin{aligned}
(\Delta S)(U,X,Y,Z)
=~&g((\nabla_U\Delta\tilde K)(X,Y),Z)
+\sum_{i=1}^n(R(E_i,U)\cdot S)(E_i,X,Y,Z)\\
&+\sum_{i=1}^n\{R(E_i,U)\cdot
(\nabla_{E_i}\tilde K^\flat)\}(X,Y,Z).
\end{aligned}
\end{equation}
Here, for a covariant $k$-tensor $A$, we use the convention
\begin{equation}\label{eqn:4.31}
\begin{aligned}
(R(U,V)\mathbin{\cdot}A)(X_1,\ldots,X_k):=-\sum_{s=1}^k
A(X_1,\ldots,R(U,V)X_s,\ldots,X_k).
\end{aligned}
\end{equation}
Let $\mathop{\mathfrak{S}}_{X,Y,Z}$ be the cyclic summation over $X,Y,Z$.
Since $(M^n,g)$ has constant sectional curvature $c$, it follows that
\begin{equation}\label{eqn:4.32}
\begin{aligned}
\sum_{i=1}^n(R(E_i,U)\cdot S)(E_i,X,Y,Z)
=~&(n-1)cS(U,X,Y,Z)+c\mathop{\mathfrak{S}}_{X,Y,Z}S(X,U,Y,Z)\\
=~&(n+2)cS(U,X,Y,Z).
\end{aligned}
\end{equation}
Similarly,
\begin{equation}\label{eqn:4.33}
\begin{aligned}
\sum_{i=1}^n(R(E_i,U)\cdot
(\nabla_{E_i}\tilde K^\flat))(X,Y,Z)
=c\mathop{\mathfrak{S}}_{X,Y,Z}S(X,U,Y,Z)
=3cS(U,X,Y,Z).
\end{aligned}
\end{equation}
Moreover, \eqref{eqn:4.7} implies that
\begin{equation}\label{eqn:4.34}
\begin{aligned}
g((\nabla_U\Delta\tilde K)(X,Y),Z)=(n+1)cS(U,X,Y,Z).
\end{aligned}
\end{equation}
Substituting \eqref{eqn:4.32}--\eqref{eqn:4.34} into \eqref{eqn:4.30},
we immediately have
\begin{equation}\label{eqn:4.35}
\begin{aligned}
(\Delta S)(U,X,Y,Z)=2(n+3)cS(U,X,Y,Z).
\end{aligned}
\end{equation}

We next construct the scalar quantity needed for the final Bochner argument.
Through combining the Bochner identity \eqref{eqn:2.34} with \eqref{eqn:4.7},
we find
\begin{equation}\label{eqn:4.36}
\tfrac12\Delta\|\tilde{K}\|^2
 =\|S\|^2+(n+1)c\|\tilde{K}\|^2.
\end{equation}
On the other hand, using $\|\nabla T\|^2=n\rho^2$ and $\Delta T=-cT$,
we get
\begin{equation}\label{eqn:4.37}
\tfrac12\Delta\|T\|^2
=\|\nabla T\|^2+g(\Delta T,T)
=n\rho^2-c\|T\|^2,
\end{equation}
which, combined with \eqref{eqn:4.8}, gives
\begin{equation}\label{eqn:4.38}
\tfrac12\Delta\|\tilde K\|^2
=\tfrac{n^2(n-1)}{n+2}(n\rho^2-c\|T\|^2).
\end{equation}
Comparing \eqref{eqn:4.36} and \eqref{eqn:4.38} and using \eqref{eqn:4.8},
we obtain
\begin{equation}\label{eqn:4.39}
\begin{aligned}
\|S\|^2
=(n-1)\big\{\tfrac{n^3}{n+2}\rho^2-n^2c\|T\|^2
+cn(n+1)(c-1)\big\}.
\end{aligned}
\end{equation}
Adding $2(n+1)c\|\tilde K\|^2$ to \eqref{eqn:4.39} and simplifying with
\eqref{eqn:4.8}, we conclude that
\begin{equation}\label{eqn:4.40}
\|S\|^2+2(n+1)c\|\tilde{K}\|^2
=\tfrac{n(n-1)}{n+2}\{n^2(\rho^2+c\|T\|^2)
-(n+1)(n+2)c(c-1)\}.
\end{equation}

It remains to calculate the Laplacian of \eqref{eqn:4.40}, whose right side
is constant by \eqref{eqn:4.9}. Therefore, according to \eqref{eqn:2.34},
\eqref{eqn:4.7} and \eqref{eqn:4.35}, it is easy to check that
\begin{equation}\label{eqn:4.41}
\begin{aligned}
0=~&\tfrac12\Delta\{\|S\|^2+2(n+1)c\|\tilde K\|^2\}\\
=~&\|\nabla S\|^2+2(n+3)c\|S\|^2
+2(n+1)c\{\|S\|^2+(n+1)c\|\tilde K\|^2\}\\
=~&\|\nabla S\|^2+4(n+2)c\|S\|^2
+2(n+1)^2c^2\|\tilde K\|^2.
\end{aligned}
\end{equation}
This verifies \eqref{eqn:4.29} and hence completes the proof of Lemma
\ref{lem:4.2}.
\end{proof}

\subsection{Submanifolds with constant sectional curvature $c=0$}\label{sect:4.3}~

For $c=0$, lower the upper index of $K$ and define
\begin{equation}\label{eqn:4.42}
\begin{aligned}
K^\flat(X,Y,Z)&:=g(K(X,Y),Z).
\end{aligned}
\end{equation}
Set the $(0,4)$-tensor $P:=\nabla K^\flat$. It then follows that
\begin{equation}\label{eqn:4.43}
\begin{aligned}
P(U,X,Y,Z)
=(\nabla_UK^\flat)(X,Y,Z)
=g((\nabla_UK)(X,Y),Z),
\end{aligned}
\end{equation}
where the last equality follows from the fact $\nabla g=0$. Based on
the symmetry of $K^\flat$ and the Codazzi equation \eqref{eqn:2.21},
$P$ is totally symmetric. For $p\in M^n$ and $U,X\in T_pM^n$, let
$P_{UX}\in\operatorname{End}(T_pM^n)$ be determined by
\begin{equation}\label{eqn:4.44}
\begin{aligned}
g(P_{UX}Y,Z):=P(U,X,Y,Z).
\end{aligned}
\end{equation}
Then $P_{UX}=(\nabla_UK)_X$, while the total symmetry implies that $P_{UX}
=P_{XU}$ and makes $P_{UX}$ self-adjoint with respect to the metric $g$.
The following proposition verifies that $P$ is parallel and establishes the
commutator identity needed in the flat case.

\begin{proposition}\label{pro:4.2}
Let $M^n$ $(n\geq2)$ be a flat Legendrian submanifold in the unit sphere
$\mathbb S^{2n+1}$ with conformal Maslov form. Then
\begin{equation}\label{eqn:4.45}
\Delta P=0,\ \ \nabla P=0.
\end{equation}
Moreover, it holds that
\begin{equation}\label{eqn:4.46}
[P_{UX},P_{VY}]+[P_{VX},P_{UY}]=0.
\end{equation}
\end{proposition}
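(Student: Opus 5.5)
For $c=0$, Lemma \ref{lem:4.1} gives $\nabla\rho=0$, so $\rho$ is constant and $\nabla^2T=0$ by \eqref{eqn:4.5}. Proposition \ref{pro:4.1} with $c=0$ then yields $\Delta K=0$, and \eqref{eqn:4.7}--\eqref{eqn:4.8} give $\|\tilde K\|^2=\tfrac{n^2(n-1)}{n+2}\|T\|^2+n(n-1)$. Here $\|T\|^2$ satisfies $d\|T\|^2=2\rho T^\flat$ and, by \eqref{eqn:4.37}, $\tfrac12\Delta\|T\|^2=n\rho^2$.

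\emph{Step 1: $\Delta P=0$.} The curvature vanishes, so covariant derivatives commute on all tensors. Hence $\Delta P=\nabla(\Delta K^\flat)$, and $\Delta K^\flat=0$ gives $\Delta P=0$ directly.

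\emph{Step 2: $\nabla P=0$.} I would apply the Bochner identity \eqref{eqn:2.34} twice. First, to $K$:
\[
\tfrac12\Delta\|K\|^2=\|P\|^2+g(\Delta K,K)=\|P\|^2 .
\]
Second, by \eqref{eqn:2.27} with $\kappa=0$, $\|K\|^2=n^2\|T\|^2+n(n-1)$, so
\[
\tfrac12\Delta\|K\|^2=n^2\cdot n\rho^2=n^3\rho^2 .
\]
Comparing, $\|P\|^2=n^3\rho^2$ is constant. Applying \eqref{eqn:2.34} to $P$ and using Step 1,
\[
0=\tfrac12\Delta\|P\|^2=\|\nabla P\|^2 ,
\]
so $\nabla P=0$. (As a consistency check, $\rho$ enters $P$ only through its trace part, so $\rho$ constant is what the argument needs, and we have it.)

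\emph{Step 3: the commutator identity.} Since $\nabla P=0$, the Ricci identity applied to $K$ (flat case) is trivial, so the information must come from the Gauss equation \eqref{eqn:2.26}. With $R=0$ it reads
\[
[K_X,K_Y]=-(X\wedge Y), \qquad (X\wedge Y)Z:=g(Y,Z)X-g(X,Z)Y,
\]
and the right-hand side is a parallel tensor. Differentiating once in direction $U$ gives
\[
[P_{UX},K_Y]+[K_X,P_{UY}]=0 .
\]
Differentiating again in direction $V$, and using $\nabla P=0$ and $(\nabla_V K)_Y=P_{VY}$, gives
\[
[P_{UX},P_{VY}]+[P_{VX},P_{UY}]=0 ,
\]
which is \eqref{eqn:4.46}.

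The main obstacle is Step 2: it relies on the exact cancellation showing that $\|P\|^2$ is constant. That cancellation rests on $c=0$ forcing $\rho$ constant via Lemma \ref{lem:4.1}, and on \eqref{eqn:4.37}.
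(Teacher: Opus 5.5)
Your proposal is correct and follows the paper's proof essentially step for step. You get $\Delta K=0$ from \eqref{eqn:4.6}, use the Bochner identity on $K$ and $T$ to obtain $\|P\|^2=n^3\rho^2$ as a constant, use flatness to get $\Delta P=\nabla(\Delta K^\flat)=0$, then apply Bochner to $P$ to conclude $\nabla P=0$, and differentiate the flat Gauss equation $[K_X,K_Y]=-X\wedge Y$ twice to obtain \eqref{eqn:4.46}.
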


\begin{proof}
By Lemma \ref{lem:4.1}, $\rho$ is constant and $\Delta T=0$. For $c=0$,
\eqref{eqn:4.6} reduces to
\begin{equation}\label{eqn:4.47}
\Delta K=0.
\end{equation}
For $c=0$, \eqref{eqn:2.27} becomes $\|K\|^2=n^2\|T\|^2+n(n-1)$.
Applying \eqref{eqn:2.34} to $K$ and $T$, and using \eqref{eqn:4.47}
and $\Delta T=0$, we obtain
\begin{equation}\label{eqn:4.48}
\|P\|^2=\tfrac12\Delta\|K\|^2
=\tfrac{n^2}{2}\Delta\|T\|^2
=n^2\|\nabla T\|^2=n^3\rho^2.
\end{equation}
Since $\rho$ is constant, \eqref{eqn:4.48} shows that $P$ has constant
length. Flatness permits covariant differentiation to commute with the
connection Laplacian. Consequently, \eqref{eqn:4.47} yields
\begin{equation}\label{eqn:4.49}
\Delta P=\Delta(\nabla K^\flat)
=\nabla(\Delta K^\flat)=0.
\end{equation}
Applying \eqref{eqn:2.34} to $P$ and using \eqref{eqn:4.48} and
\eqref{eqn:4.49}, we further obtain
\begin{equation}\label{eqn:4.50}
0=\tfrac12\Delta\|P\|^2
=\|\nabla P\|^2+g(\Delta P,P)
=\|\nabla P\|^2.
\end{equation}
Consequently, $\nabla P=0$ and \eqref{eqn:4.45} follows immediately.

It remains to prove the commutator identity in \eqref{eqn:4.46}. Since $c=0$,
the Gauss equation \eqref{eqn:2.26} takes the tensor form
\begin{equation}\label{eqn:4.51}
[K_X,K_Y]=-X\wedge Y,
\end{equation}
where $(X\wedge Y)Z=g(Y,Z)X-g(X,Z)Y$. Fix $p\in M^n$ and extend $X,Y$ locally
so that $\nabla X=\nabla Y=0$ at $p$. For any $Z\in T_pM^n$,
\begin{equation}\label{eqn:4.52}
(\nabla_UK_X)Z
=(\nabla_UK)(X,Z)+K(\nabla_UX,Z)
=P_{UX}Z,
\end{equation}
and thus $\nabla_UK_X=P_{UX}$. Similarly, $\nabla_UK_Y=P_{UY}$.
Differentiating the Gauss equation in the $U$-direction, we calculate that
\begin{equation}\label{eqn:4.53}
\begin{aligned}
0&=\nabla_U[K_X,K_Y]\\
&=P_{UX}K_Y+K_XP_{UY}-P_{UY}K_X-K_YP_{UX}\\
&=[P_{UX},K_Y]+[K_X,P_{UY}].
\end{aligned}
\end{equation}
Since $p$ is arbitrary, \eqref{eqn:4.53} holds on $M^n$.

Fix $p\in M^n$ and extend $U,X,Y$ locally so that their covariant derivatives
vanish at $p$. Since $\nabla g=0$, covariant differentiation commutes with
raising the last index of $P$. Thus, for $A=X,Y$, the induced connection on
$\operatorname{End}(TM^n)$ satisfies at $p$
\begin{equation}\label{eqn:4.54}
\begin{aligned}
\nabla_V(P_{UA})
&=(\nabla_VP)_{UA}
+P_{\nabla_VU,A}+P_{U,\nabla_VA}
=(\nabla_VP)_{UA}=0,\\
\nabla_VK_A
&=(\nabla_VK)_A+K_{\nabla_VA}=P_{VA},
\end{aligned}
\end{equation}
where the last equality in the first identity used the fact $\nabla P=0$.
Differentiating \eqref{eqn:4.53} in the $V$-direction, we deduce from
\eqref{eqn:4.54} that
\begin{equation}\label{eqn:4.55}
\begin{aligned}
0&=\nabla_V\{[P_{UX},K_Y]+[K_X,P_{UY}]\}\\
&=P_{UX}P_{VY}-P_{VY}P_{UX}+P_{VX}P_{UY}-P_{UY}P_{VX}\\
&=[P_{UX},P_{VY}]+[P_{VX},P_{UY}].
\end{aligned}
\end{equation}
Since $p$ is arbitrary, this proves \eqref{eqn:4.46} and completes the
proof of Proposition \ref{pro:4.2}.
\end{proof}

The preceding proposition also determines a pointwise normal form for $P$.

\begin{lemma}\label{lem:4.3}
Let $M^n$ $(n\geq2)$ be a flat Legendrian submanifold in the unit sphere
$\mathbb S^{2n+1}$ with conformal Maslov form. For an arbitrary $p\in M^n$,
there is a $g$-orthonormal basis $\{e_i\}^n_{i=1}$ of $T_pM^n$, with the
dual basis $\{\omega^i\}$, such that, at $p$,
\begin{equation}\label{eqn:4.56}
P=n\rho\sum_{i=1}^n(\omega^i)^4.
\end{equation}
\end{lemma}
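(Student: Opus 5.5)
Fix $p$ and work in $T_pM^n$. The tensor $P$ is a totally symmetric $4$-tensor, and by \eqref{eqn:4.46} the self-adjoint endomorphisms $P_{UX}$ satisfy $[P_{UX},P_{VY}]+[P_{VX},P_{UY}]=0$. I will first show that all $P_{UX}$ commute, then diagonalize them simultaneously, and finally use the trace information from the conformal Maslov condition to identify the coefficients.

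\emph{Commutation.} Setting $U=V$ in \eqref{eqn:4.46} gives $[P_{UX},P_{UY}]=0$ for all $U,X,Y$. Polarizing in $U$ (replace $U$ by $U+V$) gives back \eqref{eqn:4.46}, so this step alone does not suffice. Instead, symmetrize: \eqref{eqn:4.46} is symmetric under $U\leftrightarrow V$. Interchanging $X\leftrightarrow Y$ gives $[P_{UY},P_{VX}]+[P_{VY},P_{UX}]=0$, which is the negative of the original relation and hence carries no new information either. I therefore plan to use positivity. Let $A=P_{UX}$ and $B=P_{VY}$, both self-adjoint, so that $[A,B]$ is skew. Take $V=X$ and $Y=U$; then \eqref{eqn:4.46} becomes $[P_{UX},P_{XU}]+[P_{XX},P_{UU}]=0$. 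Since $P_{UX}=P_{XU}$, this yields $[P_{XX},P_{UU}]=0$. Polarizing now in $X$ and in $U$ separately shows that $[P_{XY},P_{UV}]=0$ for all arguments. This is the main point; the polarization must be carried out carefully, since the bilinear map $(X,Y)\mapsto P_{XY}$ is symmetric and the relation $[P_{XX},P_{UU}]=0$ is quadratic in each slot.

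\emph{Diagonalization.} The family $\{P_{XY}\}$ is commuting and self-adjoint, so there is an orthonormal basis $\{e_i\}$ diagonalizing every $P_{XY}$. Then $P(e_i,e_j,e_k,e_l)=g(P_{e_ie_j}e_k,e_l)$ vanishes unless $k=l$. By total symmetry, a nonzero component needs equal indices in every pair, so only $P_{iiii}=:\lambda_i$ survive, and $P=\sum\lambda_i(\omega^i)^4$.

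\emph{Coefficients.} From \eqref{eqn:4.10}, $\sum_j P(e_j,e_j,X,Y)=ng(\nabla_XT,Y)=n\rho\, g(X,Y)$. Inserting the diagonal form gives $\lambda_i=n\rho$ for every $i$, which is \eqref{eqn:4.56}. As a consistency check, this gives $\|P\|^2=n\cdot n^2\rho^2=n^3\rho^2$, in agreement with \eqref{eqn:4.48}. The main obstacle is the commutation step.
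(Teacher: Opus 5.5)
Your proposal is correct and follows the paper's proof step for step: specializing \eqref{eqn:4.46} (your choice $V=X$, $Y=U$ is the paper's choice up to relabeling) to get $[P_{XX},P_{UU}]=0$, polarizing to obtain a commuting self-adjoint family, simultaneously diagonalizing and using total symmetry to kill all mixed components, and then fixing $p_i=n\rho$ via the trace identity $\sum_j P(e_j,e_j,X,Y)=n\rho\,g(X,Y)$. The exploratory remarks about the case $U=V$ and about ``positivity'' are never used and can simply be deleted.
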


\begin{proof}
Taking $U=Y$ and $V=X$ in \eqref{eqn:4.46}, we obtain
\begin{equation}\label{eqn:4.57}
[P_{XX},P_{YY}]=0.
\end{equation}
Thus the self-adjoint endomorphisms $P_{XX}$ commute pairwise. The polarization
identity
\begin{equation}\label{eqn:4.58}
2P_{UX}=P_{U+X,U+X}-P_{UU}-P_{XX}
\end{equation}
expresses every $P_{UX}$ as a linear combination of the commuting endomorphisms
$P_{ZZ}$. Hence the entire family $\{P_{UX}\}$ is pairwise commuting.

Fix $p\in M^n$ and choose a common orthonormal eigenbasis $\{e_i\}$, with
the dual basis $\{\omega^i\}$. If $j\ne k$, then $P(U,X,e_j,e_k)=g(P_{UX}e_j,
e_k)=0$. The total symmetry of $P$ now shows that every component containing
two distinct indices vanishes. Consequently,
\begin{equation}\label{eqn:4.59}
P=\sum_{i=1}^np_i(\omega^i)^4,
\end{equation}
where $p_i=P(e_i,e_i,e_i,e_i)$. Moreover, we deduce from \eqref{eqn:2.25}
and \eqref{eqn:4.43} that
\begin{equation}\label{eqn:4.60}
\begin{aligned}
\sum_{j=1}^nP(U,X,e_j,e_j)
=\sum_{j=1}^ng((\nabla_UK)(X,e_j),e_j)
=ng(\nabla_UT,X)=n\rho g(U,X).
\end{aligned}
\end{equation}
On the other hand, tracing the preceding diagonal expression with
\eqref{eqn:4.59} gives
\begin{equation}\label{eqn:4.61}
\sum_{j=1}^nP(U,X,e_j,e_j)
=\sum_{i=1}^np_i\omega^i(U)\omega^i(X).
\end{equation}
Comparing the two expressions shows that $p_i=n\rho$ for every $i$, which
proves \eqref{eqn:4.56}.
\end{proof}

\subsection{Submanifolds with constant sectional curvature $c<0$}\label{sect:4.4}~

Throughout this subsection, let $M^n$ $(n\geq2)$ denote a Legendrian
submanifold in the unit sphere $\mathbb S^{2n+1}$ with constant sectional
curvature $c$ and conformal Maslov form. Recall the totally symmetric
tensor $P=\nabla K^\flat$ in \eqref{eqn:4.43}. For $U,X\in T_pM$, let
$P_{UX}\in\operatorname{End}(T_pM)$ be the self-adjoint endomorphism
determined by
\begin{equation}\label{eqn:4.62}
g(P_{UX}Y,Z)=P(U,X,Y,Z).
\end{equation}
For an orthonormal basis $\{e_a\}_{a=1}^n$, define
\begin{equation}\label{eqn:4.63}
R^P(X,Y)=\sum_{a=1}^n[P_{e_aX},P_{e_aY}].
\end{equation}
This definition is independent of the orthonormal basis. According to
the fact
\begin{equation}\label{eqn:4.64}
R(X,Y)Z=(X\wedge Y)Z+[K_X,K_Y]Z,
\end{equation}
where $(X\wedge Y)Z=g(Y,Z)X-g(X,Z)Y$, constant sectional curvature gives
\begin{equation}\label{eqn:4.65}
[K_X,K_Y]=\beta X\wedge Y,
\ \  \beta:=c-1.
\end{equation}
We now introduce the self-adjoint endomorphism
\begin{equation}\label{eqn:4.66}
\mathcal L:=K_T-\tfrac{n+1}{n}\beta I,
\ \
H_{\mathcal L}:=\tfrac1n\operatorname{trace}\mathcal L
=\|T\|^2-\tfrac{n+1}{n}\beta.
\end{equation}

\begin{lemma}\label{lem:4.4}
The tensor $P$ satisfies
\begin{equation}\label{eqn:4.67}
\sum_{i=1}^nP(U,X,e_i,e_i)=n\rho g(U,X),
\end{equation}
and
\begin{equation}\label{eqn:4.68}
[P_{UX},K_Y]+[K_X,P_{UY}]=0.
\end{equation}
Moreover, it holds that
\begin{equation}\label{eqn:4.69}
R^P(X,Y)
=\tfrac{nc}{2}\bigl(X\wedge\mathcal L Y
+\mathcal L X\wedge Y\bigr).
\end{equation}
\end{lemma}

\begin{proof}
Take the covariant derivative of $\sum_{i=1}^nK(e_i,e_i)=nT$ in a frame
that is normal at the point under consideration. By the total symmetry
of $P$ there holds
\begin{equation}\label{eqn:4.70}
\sum_{i=1}^nP(U,X,e_i,e_i)
=ng(\nabla_UT,X)=n\rho g(U,X),
\end{equation}
which proves \eqref{eqn:4.67}. Differentiating \eqref{eqn:4.65} and using
that $\beta$ is constant yields \eqref{eqn:4.68}.

To prove the last assertion, fix a point, take a normal orthonormal frame
there, and extend $X,Y$ such that their covariant derivatives vanish at that
point. By applying the connection Laplacian to \eqref{eqn:4.65} we obtain
\begin{equation}\label{eqn:4.71}
0=[(\Delta K)_X,K_Y]+[K_X,(\Delta K)_Y]
+2R^P(X,Y).
\end{equation}
With the help of \eqref{eqn:4.6}, it satisfies
\begin{equation}\label{eqn:4.72}
(\Delta K)(X,Y)=(n+1)cK(X,Y)-nc\Phi(X,Y),
\end{equation}
where
\begin{equation}\label{eqn:4.73}
\Phi(X,Y)=g(T,X)Y+g(T,Y)X+g(X,Y)T.
\end{equation}
Equivalently, $(\Delta K)_X=(n+1)cK_X-nc\Phi_X$. The total symmetry of
$K^\flat$ gives
\begin{equation}\label{eqn:4.74}
[\Phi_X,K_Y]+[K_X,\Phi_Y]
=X\wedge K_TY+K_TX\wedge Y.
\end{equation}
Substituting \eqref{eqn:4.65} and \eqref{eqn:4.74} into \eqref{eqn:4.71},
we conclude that
\begin{equation}\label{eqn:4.75}
\begin{aligned}
2R^P(X,Y)
&=nc\bigl(X\wedge K_TY+K_TX\wedge Y\bigr)
-2(n+1)c\beta X\wedge Y\\
&=nc\bigl(X\wedge\mathcal L Y
+\mathcal L X\wedge Y\bigr),
\end{aligned}
\end{equation}
which is \eqref{eqn:4.69}. Hence Lemma \ref{lem:4.4} has been proved.
\end{proof}

Define the tensor with values in skew-adjoint endomorphisms by
\begin{equation}\label{eqn:4.76}
\mathcal A(U,X,Y):=[P_{UX},K_Y].
\end{equation}
It is known from \eqref{eqn:4.68} and the symmetry of $P$ that $\mathcal A$
is symmetric in its three vector arguments.  We use the inner product $\langle
B,C\rangle=-\operatorname{tr}(BC)$ on skew-adjoint endomorphisms and define
the Gram endomorphism $G_{\mathcal A}$ by
\begin{equation}\label{eqn:4.77}
g(G_{\mathcal A}X,W)
=\sum_{a,i=1}^n
\langle\mathcal A(e_a,e_i,X),
\mathcal A(e_a,e_i,W)\rangle.
\end{equation}
In particular, $G_{\mathcal A}$ is positive semidefinite and $\operatorname{tr}
G_{\mathcal A}=\|\mathcal A\|^2$.

\begin{lemma}\label{lem:4.5}
One has
\begin{equation}\label{eqn:4.78}
G_{\mathcal A}
=-n\beta c\{(n-2)\mathcal L+nH_{\mathcal L}I\}.
\end{equation}
Consequently,
\begin{equation}\label{eqn:4.79}
\|\mathcal A\|^2
=-2n^2\beta c(n-1)H_{\mathcal L}.
\end{equation}
\end{lemma}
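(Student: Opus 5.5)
The plan is to express the Gram endomorphism through the curvature-type operator $R^P$ of Lemma \ref{lem:4.4}, and then evaluate the result with \eqref{eqn:4.69} and the Gauss relation \eqref{eqn:4.65}.

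\emph{Step 1: a convenient rewriting.} Since $\mathcal A$ is symmetric in its three arguments, we may write $\mathcal A(e_a,e_i,W)=[P_{e_aW},K_{e_i}]$ in the second factor of \eqref{eqn:4.77}. Cyclicity of the trace gives $-\operatorname{tr}([B,C]D)=\operatorname{tr}(C[B,D])$. Hence
\begin{equation*}
g(G_{\mathcal A}X,W)=\sum_{a,i}\operatorname{tr}\bigl(K_X\,[P_{e_ae_i},[P_{e_aW},K_{e_i}]]\bigr).
\end{equation*}

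\emph{Step 2: the Jacobi identity.} We expand
\begin{equation*}
[P_{e_ae_i},[P_{e_aW},K_{e_i}]]=[[P_{e_ae_i},P_{e_aW}],K_{e_i}]+[P_{e_aW},[P_{e_ae_i},K_{e_i}]].
\end{equation*}
In the first term, summing over $a$ produces $R^P(e_i,W)$ by \eqref{eqn:4.63}. In the second term, symmetry of $\mathcal A$ gives $\sum_i[P_{e_ae_i},K_{e_i}]=\sum_i[P_{e_ie_i},K_{e_a}]$. By \eqref{eqn:4.67} we have $\sum_iP_{e_ie_i}=n\rho I$, which commutes with everything, so this term vanishes. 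Therefore
\begin{equation*}
g(G_{\mathcal A}X,W)=\sum_i\operatorname{tr}\bigl(K_X[R^P(e_i,W),K_{e_i}]\bigr).
\end{equation*}

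\emph{Step 3: substitution.} Next we insert \eqref{eqn:4.69}:
\begin{equation*}
R^P(e_i,W)=\tfrac{nc}{2}\bigl(e_i\wedge\mathcal LW+\mathcal Le_i\wedge W\bigr).
\end{equation*}
We then expand $[Y\wedge Z,K_{e_i}]$ explicitly. For self-adjoint $K_{e_i}$ this equals $-(K_{e_i}Y\wedge Z+Y\wedge K_{e_i}Z)$. We sum over $i$ using the following facts:
\begin{enumerate}
\item[(i)] total symmetry of $K^\flat$, so that $K_{e_i}e_j=K_{e_j}e_i$;
\item[(ii)] the trace relations $\sum_iK_{e_i}e_i=nT$ and $\operatorname{tr}K_X=ng(T,X)$;
\item[(iii)] $\sum_iK_{e_i}\mathcal L e_i$, which we rewrite through \eqref{eqn:4.65}: $K_{e_i}K_T-K_TK_{e_i}=\beta e_i\wedge T$.
\end{enumerate}
Finally we pair the result against $K_X$ through the trace, where $\operatorname{tr}(K_X\,(Y\wedge Z))=g(K_XY,Z)-g(K_XZ,Y)$. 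Collecting terms, all $K$-quadratic expressions should reduce via \eqref{eqn:4.65} and the definition \eqref{eqn:4.66} of $\mathcal L$ to multiples of $\beta\mathcal L$ and $\beta H_{\mathcal L}I$, yielding \eqref{eqn:4.78}. Since this step involves many index contractions, we verify the coefficients $(n-2)$ and $n$ by also checking the trace.

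\emph{Step 4: the trace formula.} Taking the trace of \eqref{eqn:4.78} and using $\operatorname{tr}\mathcal L=nH_{\mathcal L}$ gives
\begin{equation*}
\|\mathcal A\|^2=-n\beta c\,\bigl((n-2)nH_{\mathcal L}+n^2H_{\mathcal L}\bigr)=-2n^2(n-1)\beta cH_{\mathcal L},
\end{equation*}
which is \eqref{eqn:4.79}.

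The main obstacle is the bookkeeping in Step 3, in particular the terms where $K_X$ meets $K_{e_i}$ and $K_T$. These must be converted by the Gauss relation $[K_X,K_Y]=\beta X\wedge Y$ into terms involving $\mathcal L$. I would organize that computation componentwise in an orthonormal frame $\{e_i\}$.
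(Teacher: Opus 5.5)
Your Steps 1, 2 and 4 are correct and are essentially the paper's argument. The identity \eqref{eqn:4.81} used there is the Jacobi identity in trace form, applied with the same choice $B=P_{e_ae_i}$, $C=K_X$, $D=P_{e_aX}$, $E=K_{e_i}$, and the extra term is killed by \eqref{eqn:4.67} exactly as in your Step 2.

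The gap is Step 3, which carries the whole content of the lemma. You only sketch it, and the formulas you use there are wrong. The rule $[Y\wedge Z,K_{e_i}]=-(K_{e_i}Y\wedge Z+Y\wedge K_{e_i}Z)$ holds when $K_{e_i}$ is replaced by a \emph{skew}-adjoint operator. For self-adjoint $K_{e_i}$, the commutator with the skew-adjoint $Y\wedge Z$ is self-adjoint, so it cannot be a combination of wedges. The correct expansion is $[Y\wedge Z,K]V=g(KZ,V)Y+g(Y,V)KZ-g(KY,V)Z-g(Z,V)KY$. Likewise, your pairing $\operatorname{tr}(K_X(Y\wedge Z))$ vanishes identically, because $K_X$ is self-adjoint and $Y\wedge Z$ is skew. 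Carried out with the tools you list, Step 3 would therefore return $G_{\mathcal A}=0$. The claim that everything should reduce to multiples of $\beta\mathcal L$ and $\beta H_{\mathcal L}I$ is not supported by anything you have written.

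The missing idea is one more use of cyclicity before substituting \eqref{eqn:4.69}:
\[
\operatorname{tr}\bigl(K_X[R^P(e_i,W),K_{e_i}]\bigr)
=\operatorname{tr}\bigl(R^P(e_i,W)[K_{e_i},K_X]\bigr)
=\beta\,\langle R^P(e_i,W),X\wedge e_i\rangle .
\]
That is, apply the Gauss relation \eqref{eqn:4.65} to the pair $K_{e_i},K_X$, not to $K_{e_i},K_T$. This is the bilinear version of the paper's \eqref{eqn:4.82}. Both factors are now wedge operators. Then \eqref{eqn:4.69} together with \eqref{eqn:4.83} gives at once $\sum_i\langle R^P(e_i,W),X\wedge e_i\rangle=-nc\{(n-2)g(\mathcal LX,W)+nH_{\mathcal L}g(X,W)\}$. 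This is \eqref{eqn:4.84}, and it yields \eqref{eqn:4.78}.

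No products of $K$'s, no $\sum_iK_{e_i}\mathcal Le_i$ and no $K_T$ ever occur. The bookkeeping you identify as the main obstacle is an artifact of the incorrect expansion. If you prefer to expand directly, the correct identity is $\operatorname{tr}(K_X[Y\wedge Z,K_{e_i}])=2g([K_{e_i},K_X]Y,Z)$, which leads to the same short computation.
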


\begin{proof}
First observe that full symmetry and \eqref{eqn:4.67} yield
\begin{equation}\label{eqn:4.80}
\sum_{i=1}^n\mathcal A(U,e_i,e_i)
=[\sum_{i=1}^nP_{e_ie_i},K_U]
=[n\rho I,K_U]=0.
\end{equation}
Here \eqref{eqn:4.67} implies that $\sum_{i=1}^nP_{e_ie_i}=n\rho I$
can be regarded as an endomorphism. For the self-adjoint endomorphisms
$B,C,D,E$, invariance of the trace under commutators gives
\begin{equation}\label{eqn:4.81}
\langle[B,C],[D,E]\rangle
-\langle[B,E],[D,C]\rangle
=\langle[B,D],[C,E]\rangle.
\end{equation}
Apply this identity with $B=P_{e_ae_i}$, $C=K_X$, $D=P_{e_aX}$ and $E=K_{e_i}$,
and sum over $a,i$.  The term containing $[P_{e_ae_i},K_{e_i}]$ vanishes by
\eqref{eqn:4.80}. By using \eqref{eqn:4.65} we obtain
\begin{equation}\label{eqn:4.82}
g(G_{\mathcal A}X,X)
=\beta\sum_{i=1}^n
\langle R^P(e_i,X),X\wedge e_i\rangle.
\end{equation}
Based on \eqref{eqn:4.69}, the polarization and elementary identity
\begin{equation}\label{eqn:4.83}
\langle U\wedge V,X\wedge Y\rangle
=2\{g(U,X)g(V,Y)-g(U,Y)g(V,X)\}
\end{equation}
show that
\begin{equation}\label{eqn:4.84}
\sum_{i=1}^n\langle R^P(e_i,X),W\wedge e_i\rangle
=-nc\bigl\{(n-2)g(\mathcal LX,W)
+nH_{\mathcal L}g(X,W)\bigr\}.
\end{equation}
The right side is symmetric in $X,W$ and therefore \eqref{eqn:4.82} and
\eqref{eqn:4.84} prove \eqref{eqn:4.78}. Finally,
\begin{equation}\label{eqn:4.85}
\operatorname{tr}\{(n-2)\mathcal L+nH_{\mathcal L}I\}
=2n(n-1)H_{\mathcal L},
\end{equation}
and taking the trace of \eqref{eqn:4.78} gives \eqref{eqn:4.79}, which
completes the proof of Lemma \ref{lem:4.5}.
\end{proof}

\begin{proposition}\label{pro:4.3}
There is no Legendrian immersion of dimension $n\geq2$ in the unit
sphere that has conformal Maslov form and negative constant sectional
curvature.
\end{proposition}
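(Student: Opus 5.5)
The plan is a sign argument based on Lemma \ref{lem:4.5}. Suppose, for contradiction, that $M^n$ $(n\geq2)$ is a Legendrian submanifold in $\mathbb S^{2n+1}$ with conformal Maslov form and constant sectional curvature $c<0$. Since the statement is local, we may restrict the immersion to an embedded piece, so all the identities of Section \ref{sect:4.4} are available. In particular, \eqref{eqn:4.65} holds with $\beta=c-1$, and Lemma \ref{lem:4.5} gives \eqref{eqn:4.79}:
\[
\|\mathcal A\|^2=-2n^2\beta c(n-1)H_{\mathcal L}.
\]

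First I will fix the sign of each factor. Since $c<0$, we have $\beta=c-1<-1<0$, so $\beta c>0$. Since $n\geq2$, we also have $n-1>0$. Next, by \eqref{eqn:4.66},
\[
H_{\mathcal L}=\|T\|^2-\tfrac{n+1}{n}\beta=\|T\|^2+\tfrac{n+1}{n}(1-c)\geq\tfrac{n+1}{n}(1-c)>1>0 .
\]
Hence the right-hand side of \eqref{eqn:4.79} is strictly negative at every point of $M^n$.

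On the other hand, the left-hand side is nonnegative. The bracket $\langle B,C\rangle=-\operatorname{tr}(BC)$ is positive definite on skew-adjoint endomorphisms, since $-\operatorname{tr}(B^2)=\operatorname{tr}(B^{t}B)\geq0$ for $B$ skew-adjoint. Consequently $G_{\mathcal A}$ is positive semidefinite, and $\|\mathcal A\|^2=\operatorname{tr}G_{\mathcal A}\geq0$. This contradicts the strict negativity above at any point of $M^n$, so no such immersion exists.

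The real obstacle lies upstream, in the computations of Lemmas \ref{lem:4.4} and \ref{lem:4.5}: the Laplacian of the Gauss equation \eqref{eqn:4.65}, and the trace identity \eqref{eqn:4.81} that converts $G_{\mathcal A}$ into a contraction of $R^P$ against $X\wedge e_i$. These may be assumed here. For the present proof, the only step needing care is tracking the signs: $\beta c>0$, and $H_{\mathcal L}>0$ regardless of the size of $T$, because $-\beta=1-c>0$.
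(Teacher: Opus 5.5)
Your proposal is correct and follows the paper's own proof. For $c<0$ you use $\beta=c-1<0$, hence $\beta c>0$, together with $H_{\mathcal L}=\|T\|^2+\tfrac{n+1}{n}(1-c)>0$, so the right side of \eqref{eqn:4.79} is strictly negative, which contradicts $\|\mathcal A\|^2\geq0$ at every point. Your added remarks (positive definiteness of $-\operatorname{tr}(BC)$ on skew-adjoint endomorphisms and the explicit lower bound on $H_{\mathcal L}$) spell out what the paper leaves implicit.
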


\begin{proof}
Suppose that $c<0$. Then
\begin{equation}\label{eqn:4.86}
\beta=c-1<0,\ \
H_{\mathcal L}=\|T\|^2-\tfrac{n+1}{n}\beta>0.
\end{equation}
Thus $\beta c>0$, and the right side of \eqref{eqn:4.79} is strictly
negative. This contradicts $\|\mathcal A\|^2\geq0$. This pointwise
contradiction proves the assertion for every $n\geq2$.
\end{proof}

\subsection{Closed submanifolds and curvature pinching}\label{sect:4.5}~

In this subsection, we utilize the traceless tensor $\tilde K$ introduced
in \eqref{eqn:2.29}. Let $UM^n:=\{(p,v):p\in M^n,\ v\in T_pM^n,\ g(v,v)=1\}$
denote the unit tangent bundle of $M^n$. Its canonical measure $d\sigma$ is
obtained by integrating the standard spherical measure on each fiber $U_pM^n$
against the Riemannian volume measure on $M^n$.

\begin{proposition}\label{pro:4.4}
Let $M^n$ $(n\geq2)$ be a closed Legendrian submanifold in the
unit sphere $\mathbb S^{2n+1}$ with conformal Maslov form.
Then
\begin{equation}\label{eqn:4.87}
\int_{UM^n}\{
\|(\nabla_v\tilde K)(v,v)\|^2
+3g(R(\tilde K(v,v),v)v,\tilde K(v,v))
\}\,d\sigma=0.
\end{equation}
In particular, if its sectional curvature $\sec_g\geq0$, then $\nabla\tilde
K=0$.
\end{proposition}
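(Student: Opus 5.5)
This is the standard unit-tangent-bundle integral formula for a totally symmetric traceless Codazzi-type cubic form, in the spirit of Ejiri and Dillen--Vrancken. For $v\in U_pM^n$ I would set $f(v):=g(\tilde K(v,v),v)=\tilde K^\flat(v,v,v)$, a smooth function on $UM^n$, and integrate suitable second derivatives of $f$ along fibres and along the geodesic flow.

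\emph{Step 1: the tensor identities.} By \eqref{eqn:2.31}, $\nabla\tilde K^\flat$ is totally symmetric and traceless. The symmetry comes from the Codazzi equation \eqref{eqn:2.21} together with $\nabla T=\rho\,\mathrm{Id}$. Consequently $\operatorname{div}\tilde K=0$, since $\sum_i(\nabla_{E_i}\tilde K)(E_i,X)=\sum_i(\nabla_X\tilde K)(E_i,E_i)=0$.

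\emph{Step 2: the geodesic-flow term.} Along the geodesic $\gamma_v$ we have
\[
\tfrac{d}{dt}f=(\nabla_v\tilde K^\flat)(v,v,v),\qquad
\tfrac{d^2}{dt^2}f=(\nabla^2_{v,v}\tilde K^\flat)(v,v,v).
\]
The geodesic flow preserves $d\sigma$, so $\int_{UM^n}(\nabla^2_{v,v}\tilde K^\flat)(v,v,v)\,d\sigma=0$.

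\emph{Step 3: commute and use the Ricci identity.} Now I would rewrite $(\nabla^2_{v,v}\tilde K^\flat)(v,v,v)$ by first using total symmetry of $\nabla\tilde K^\flat$ to move a derivative into a tensor slot. Then the Ricci identity \eqref{eqn:2.22} lets me commute derivatives, producing the curvature terms $-3\tilde K^\flat(R(\cdot,v)v,v,v)$-type contractions.

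\emph{Step 4: fibre integration.} To convert these pointwise expressions, I would use the fibrewise formula for homogeneous polynomials on $S^{n-1}$:
\[
\int_{U_pM}\Delta_S F=0,\qquad \Delta_S\big|_{\deg k}=\Delta_{\mathbb R^n}-k(k+n-2).
\]
Equivalently, I would use the standard averaging identities
\[
\int_{U_p}v^{i_1}\cdots v^{i_{2m}}\,d\sigma\propto\text{symmetrized }\delta\text{'s}.
\]
The divergence theorem on $M$ and the trace-free, divergence-free properties then give
\[
\int_{UM}(\nabla^2_{v,v}\tilde K^\flat)(v,v,v)\,d\sigma
=-\int_{UM}\big\{\|(\nabla_v\tilde K)(v,v)\|^2+3g(R(\tilde K(v,v),v)v,\tilde K(v,v))\big\}\,d\sigma .
\]
Concretely, I would apply the horizontal divergence on $UM$ to the vector field $X\mapsto(\nabla_v\tilde K^\flat)(v,v,\tilde K(v,v))$ together with a vertical gradient. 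The tracelessness and divergence-freeness make the leftover fibre-Laplacian terms vanish. The constant $3$ comes from the three slots of $f$ acted on by the vertical derivative of $\tilde K(v,v)$. This bookkeeping, checking that the coefficients collapse exactly to $1$ and $3$ with no leftover $\|\nabla\tilde K\|^2$ or $\|\tilde K\|^2$ terms, is the main obstacle. I would verify it via the Dillen--Vrancken computation adapted with $K$ replaced by $\tilde K$. That adaptation is legitimate because only symmetry, tracelessness and the Codazzi property are used.

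\emph{Step 5: the curvature consequence.} If $\sec_g\ge0$, then
\[
g(R(w,v)v,w)=\sec(v,w)\,\big(|w|^2-g(v,w)^2\big)\ge0
\]
for $w=\tilde K(v,v)$. Hence both integrands in \eqref{eqn:4.87} are nonnegative, so $(\nabla_v\tilde K)(v,v)=0$ for every unit $v$. Since $\nabla\tilde K^\flat$ is totally symmetric, it is determined by its values on the diagonal $(v,v,v,\cdot)$ through polarization. Therefore $\nabla\tilde K=0$.
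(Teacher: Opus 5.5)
Your Steps 1 and 5 are correct and agree with the paper. By \eqref{eqn:2.31} and the Codazzi equation, $\nabla\tilde K^\flat$ is totally symmetric and traceless, so $\operatorname{div}\tilde K^\flat=0$. Under $\sec_g\ge 0$ both integrands are nonnegative, and polarization then gives $\nabla\tilde K=0$.

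The gap is in Steps 2--4, which are the entire content of \eqref{eqn:4.87}. Your starting identity $\int_{UM^n}(\nabla^2_{v,v}\tilde K^\flat)(v,v,v)\,d\sigma=0$ carries no information. Its integrand is linear in $\tilde K$ and is an odd polynomial of degree $5$ in $v$, so it already vanishes on each fibre $U_pM^n$. No use of the Ricci identity, fibre averaging or the divergence theorem can turn an expression linear in $\tilde K$ into the quadratic expression $\|(\nabla_v\tilde K)(v,v)\|^2+3g(R(\tilde K(v,v),v)v,\tilde K(v,v))$. The displayed identity at the end of Step 4 is therefore \eqref{eqn:4.87} itself with a trivially vanishing left side, so the argument is circular. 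The ``bookkeeping'' you defer to Dillen--Vrancken is the proof. The vector field named at the end of Step 4 is also not well defined as written, since it does not depend on $X$.

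Two concrete ingredients are missing.

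\emph{A quadratic integral identity.} The paper integrates over $UM^n$ the horizontal divergence of $\phi\nabla^H\phi$, where $\phi(v)=\tilde K^\flat(v,v,v)$. Full symmetry gives $\nabla^H_{e_i}\phi=g((\nabla_v\tilde K)(v,v),e_i)$, so $\|\nabla^H\phi\|^2=\|(\nabla_v\tilde K)(v,v)\|^2$ and $\Delta_H\phi=(\Delta\tilde K^\flat)(v,v,v)$. Hence $\int\|(\nabla_v\tilde K)(v,v)\|^2\,d\sigma=-\int\phi\,(\Delta\tilde K^\flat)(v,v,v)\,d\sigma$. If you want to keep the geodesic spray $X$, you must use $\frac12X^2(\phi^2)=(X\phi)^2+\phi X^2\phi$. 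Then $(X\phi)^2=((\nabla_v\tilde K^\flat)(v,v,v))^2$ rather than $\|(\nabla_v\tilde K)(v,v)\|^2$. You would need an extra fibrewise spherical-harmonic argument, using that $\tilde K^\flat$ and $\nabla\tilde K^\flat$ are traceless, to convert both sides; each acquires the factor $4/(n+6)$.

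\emph{A fibrewise integration by parts.} The Ricci identity and $\operatorname{div}\tilde K^\flat=0$ turn $-\phi(\Delta\tilde K^\flat)(v,v,v)$ into $\phi(v)\sum_i\{g(R(e_i,v)e_i,\tilde K(v,v))+2g(R(e_i,v)v,\tilde K(e_i,v))\}$. You then need to trade this, after fibre integration, for $-3g(R(\tilde K(v,v),v)v,\tilde K(v,v))$. The paper does this with the $1$-form $\alpha_v(e)=\phi(v)g(R(e,v)v,\tilde K(v,v))$ on $U_pM^n$, differentiating along the great circles $v\cos t+e_i\sin t$. This shows that $-\delta\alpha$ equals $3g(R(\tilde K(v,v),v)v,\tilde K(v,v))$ plus exactly that curvature sum, and $\int_{U_pM^n}\delta\alpha=0$. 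Your heuristic that the $3$ comes from the vertical derivative $d\phi_v(e)=3\tilde K^\flat(v,v,e)$ is right. Without setting up such a vertical divergence, however, the coefficients $1$ and $3$ are not established.
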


\begin{proof}
By \eqref{eqn:2.31}, $\tilde K^\flat$ and $\nabla\tilde K^\flat$ are totally
symmetric and traceless. We apply the integral method of Ros and Urbano
\cite{Ros85,Urb86}, in the traceless form used in \cite[Section 3]{CHY19}.
We recall the calculation to fix the curvature sign. In this proof, let
$\phi:UM^n\to\mathbb R$ denote the scalar function $\phi(v):=\tilde
K^\flat(v,v,v)$. In an orthonormal frame $\{e_i\}_{i=1}^n$, we commute
the covariant derivatives by the Ricci identity:
\begin{equation}\label{eqn:4.88}
\begin{aligned}
(\Delta\tilde K^\flat)(v,v,v)
&=\sum_{i=1}^n(\nabla^2\tilde K^\flat)(e_i,e_i,v,v,v)
=\sum_{i=1}^n(\nabla^2\tilde K^\flat)(e_i,v,e_i,v,v)\\
&=-\sum_{i=1}^n\{g(R(e_i,v)e_i,\tilde K(v,v))
+2g(R(e_i,v)v,\tilde K(e_i,v))\},
\end{aligned}
\end{equation}
where $\operatorname{div}\tilde K^\flat=0$ follows from the full symmetry
and tracelessness of $\nabla\tilde K^\flat$. We then apply the unit
tangent bundle integration formula to the following tensor
\begin{equation}\label{eqn:4.89}
A(v_1,\ldots,v_7)
:=(\nabla_{v_1}\tilde K^\flat)(v_2,v_3,v_4)\,
\tilde K^\flat(v_5,v_6,v_7).
\end{equation}

Write $\nabla^H$ for the horizontal differentiation, with the fiber variable
extended by parallel translation. By the full symmetry of $\nabla\tilde
K^\flat$, the horizontal derivative can be written as
\begin{equation}\label{eqn:4.90}
\begin{aligned}
\nabla^H_{e_i}\phi(v)
&=(\nabla_{e_i}\tilde K^\flat)(v,v,v)
=(\nabla_v\tilde K^\flat)(v,v,e_i)\\
&=g((\nabla_v\tilde K)(v,v),e_i).
\end{aligned}
\end{equation}
The tensor $A$ therefore reads
\begin{equation}\label{eqn:4.91}
\begin{aligned}
A(e_i,v,v,v,v,v,v)
=\phi(v)\nabla^H_{e_i}\phi(v),\ \ 1\leq i\leq n.
\end{aligned}
\end{equation}
Squaring the components in \eqref{eqn:4.90} and summing, we find
\begin{equation}\label{eqn:4.92}
\|\nabla^H\phi(v)\|^2
=\sum_{i=1}^n(\nabla^H_{e_i}\phi(v))^2
=\sum_{i=1}^ng((\nabla_v\tilde K)(v,v),e_i)^2
=\|(\nabla_v\tilde K)(v,v)\|^2.
\end{equation}
Let $\Delta_H$ denote the horizontal Laplacian. Since the fiber variable
is extended by parallel translation, its action on $\phi$ takes the form
\begin{equation}\label{eqn:4.93}
\begin{aligned}
\Delta_H\phi(v)
&=\sum_{i=1}^n(\nabla^H_{e_i}\nabla^H_{e_i}\phi(v)
-\nabla^H_{\nabla_{e_i}e_i}\phi(v))\\
&=\sum_{i=1}^n(\nabla^2\tilde K^\flat)(e_i,e_i,v,v,v)
=(\Delta\tilde K^\flat)(v,v,v).
\end{aligned}
\end{equation}
Let $d\omega_p$ be the spherical measure on $U_pM^n$, which is preserved by
parallel translation. By \eqref{eqn:4.91}, the metric dual of $A(\cdot,v,v,v,
v,v,v)$ is $\phi(v)\nabla^H\phi(v)$, with horizontal vectors identified with
vectors in $T_pM^n$. Recall that $g(\nabla^H\phi,X)=\nabla^H_X\phi$ for $X\in
T_pM^n$. We apply the product rule and then differentiate this inner product
using metric compatibility:
\begin{equation}\label{eqn:4.94}
\begin{aligned}
&\sum_{i=1}^ng(\nabla^H_{e_i}(\phi\nabla^H\phi),e_i)
=\sum_{i=1}^n\{(\nabla^H_{e_i}\phi)g(\nabla^H\phi,e_i)
+\phi g(\nabla^H_{e_i}\nabla^H\phi,e_i)\}\\
&=\sum_{i=1}^n\{(\nabla^H_{e_i}\phi)^2
+\phi[\nabla^H_{e_i}g(\nabla^H\phi,e_i)
-g(\nabla^H\phi,\nabla_{e_i}e_i)]\}\\
&=\sum_{i=1}^n\{(\nabla^H_{e_i}\phi)^2
+\phi(\nabla^H_{e_i}\nabla^H_{e_i}\phi
-\nabla^H_{\nabla_{e_i}e_i}\phi)\}
=\|\nabla^H\phi\|^2+\phi\Delta_H\phi.
\end{aligned}
\end{equation}
Here $\nabla^H_{e_i}e_i=\nabla_{e_i}e_i$, since the frame is defined on
$M^n$.

Since parallel translation preserves $d\omega_p$, the covariant derivative
of a fiber integral is the fiber integral of the horizontal covariant derivative.
We now integrate the divergence over the closed manifold $M^n$. Writing the
iterated fiber integral with respect to $d\sigma$, we deduce that
\begin{equation}\label{eqn:4.95}
\begin{aligned}
0&=\int_{M^n}\operatorname{div}
(\int_{U_pM^n}\sum_{i=1}^nA(e_i,v,v,v,v,v,v)e_i\,d\omega_p(v))\,dV_g(p)\\
&=\int_{M^n}\int_{U_pM^n}
\sum_{i=1}^ng(\nabla^H_{e_i}(\phi\nabla^H\phi),e_i)
\,d\omega_p(v)\,dV_g(p)\\
&=\int_{UM^n}\{\|\nabla^H\phi(v)\|^2
+\phi(v)\Delta_H\phi(v)\}\,d\sigma.
\end{aligned}
\end{equation}
Substituting \eqref{eqn:4.88}, \eqref{eqn:4.92} and \eqref{eqn:4.93} into this
integral identity, we arrive at
\begin{equation}\label{eqn:4.96}
\begin{aligned}
&\int_{UM^n}\|(\nabla_v\tilde K)(v,v)\|^2\,d\sigma
=-\int_{UM^n}\phi(v)(\Delta\tilde K^\flat)(v,v,v)\,d\sigma\\
&=\int_{UM^n}\phi(v)\sum_{i=1}^n\{g(R(e_i,v)e_i,\tilde K(v,v))
+2g(R(e_i,v)v,\tilde K(e_i,v))\}\,d\sigma.
\end{aligned}
\end{equation}
On each unit tangent sphere $U_pM^n$, define
\begin{equation}\label{eqn:4.97}
\alpha_v(e):=\phi(v)\,g(R(e,v)v,\tilde K(v,v)),\ \
e\in T_v(U_pM^n).
\end{equation}
Fix $v\in U_pM^n$ and choose an orthonormal basis $\{e_1,\ldots,e_{n-1}\}$ of
$T_v(U_pM^n)$. Extend it to a local orthonormal frame $\{E_1,\ldots,E_{n-1}\}$
by parallel translation along the geodesics issuing from $v$. Let $\nabla^U$
and $\delta$ be the Levi-Civita connection and the codifferential on $U_pM^n$,
respectively. For $1\leq i\leq n-1$, the curve $\gamma_i(t)=v\cos t+e_i\sin t$
is a unit speed geodesic with $\gamma_i(0)=v$ and $\gamma_i'(0)=e_i$. Thus
$\gamma_i'(t)=-v\sin t+e_i\cos t=E_i(\gamma_i(t))$ and $\nabla^U_{\gamma_i'}
\gamma_i'=0$. The definition of the codifferential therefore reduces to
\begin{equation}\label{eqn:4.98}
\begin{aligned}
-\delta\alpha(v)
=\sum_{i=1}^{n-1}(\nabla^U_{E_i}\alpha)(E_i)\big|_v
=\sum_{i=1}^{n-1}E_i(\alpha(E_i))\big|_v,
\end{aligned}
\end{equation}
where we used $(\nabla^U_{E_i}E_i)|_v=0$. We evaluate these directional
derivatives along $\gamma_i$. Since $E_i(\gamma_i(t))=\gamma_i'(t)$,
differentiating \eqref{eqn:4.97} and setting $e_n=v$ gives
\begin{equation}\label{eqn:4.99}
\begin{aligned}
-\delta\alpha(v)
&=\sum_{i=1}^{n-1}\tfrac{d}{dt}
\{\alpha_{\gamma_i(t)}(\gamma_i'(t))\}\big|_{t=0}\\
&=3g(R(\tilde K(v,v),v)v,\tilde K(v,v))\\
&\quad+\phi(v)\sum_{i=1}^n\{g(R(e_i,v)e_i,\tilde K(v,v))
+2g(R(e_i,v)v,\tilde K(e_i,v))\}.
\end{aligned}
\end{equation}
On each closed fiber $U_pM^n$, the integral of $\delta\alpha$ is zero.
Integrating the preceding identity over the fibers and then over $M^n$,
and substituting this into \eqref{eqn:4.96}, we obtain \eqref{eqn:4.87}.
If $\sec_g\geq0$, both terms in the integrand of \eqref{eqn:4.87} are
nonnegative, and so $(\nabla_v\tilde K)(v,v)=0$ for every unit vector $v$.
By homogeneity this holds for every tangent vector. The full symmetry
of $\nabla\tilde K^\flat$ then allows polarization, which finally
proves $\nabla\tilde K=0$.
\end{proof}

\begin{lemma}\label{lem:4.6}
Let $M^n$ $(n\geq2)$ be a closed Legendrian submanifold in the unit
sphere $\mathbb S^{2n+1}$ with conformal Maslov form. If $\nabla\tilde
K=0$, then either $\tilde K=0$ or $\nabla K=0$. If, in addition, its
sectional curvature $\sec_g\leq1$, then $\nabla K=0$.
\end{lemma}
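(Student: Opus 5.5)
The plan rests on one observation. From \eqref{eqn:2.31}, the hypothesis $\nabla\tilde K=0$ gives
\[
g((\nabla_UK)(X,Y),Z)=\tfrac{n\rho}{n+2}\{g(U,X)g(Y,Z)+g(U,Y)g(X,Z)+g(U,Z)g(X,Y)\}.
\]
Hence $\nabla K=0$ if and only if $\rho\equiv0$. So the first assertion amounts to: either $\tilde K\equiv0$, or $\rho\equiv0$.

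\textbf{Step 1 (dichotomy).} Since $\tilde K$ is parallel, $\|\tilde K\|$ is constant on the connected manifold $M^n$. Therefore either $\tilde K\equiv0$, or $\tilde K$ vanishes nowhere. Assume the latter; I must show $\rho\equiv0$.

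\textbf{Step 2 (reduce to $d\rho=0$).} Because $M^n$ is closed, the divergence theorem gives
\[
\int_M n\rho\,dV_g=\int_M\operatorname{div}T\,dV_g=0.
\]
So it suffices to prove that $\rho$ is constant.

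\textbf{Step 3 (show $d\rho=0$).} I will use two identities.
\begin{itemize}
\item[(i)] Commuting derivatives in \eqref{eqn:2.20}, as in \eqref{eqn:4.2} but without assuming constant curvature, gives $R(X,Y)T=X(\rho)Y-Y(\rho)X$. Contracting, $\operatorname{Ric}(T)=-(n-1)\nabla\rho$.
\item[(ii)] The Ricci identity \eqref{eqn:2.22} applied to the parallel tensor $\tilde K$ gives $R(U,X)\cdot\tilde K=0$.
\end{itemize}
I then substitute the Gauss equation \eqref{eqn:2.26}, with $K=\tilde K+\tfrac n{n+2}\Phi$, into these.

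Concretely, I would apply (ii) with one slot equal to $T$, and use the decomposition $R(U,X)=U\wedge X+[K_U,K_X]$. The terms built purely from $\Phi$ have explicit forms in $T$; the terms involving $\tilde K$ should be killed using its tracelessness after contracting against $\tilde K$. The aim is a pointwise identity of the form $\|\tilde K\|^2\,d\rho=0$, or $c\,d\rho=0$ for some nonzero constant $c$.

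An alternative route: differentiate $\nabla K=\tfrac{n\rho}{n+2}\,\mathrm{Sym}(g\otimes g)$ once more and antisymmetrize. By Lemma~\ref{lem:2.1}(4) this gives
\[
R\cdot K=\tfrac{n}{n+2}\,(d\rho\wedge\cdot)\,\mathrm{Sym}(g\otimes g).
\]
Combining this with $R\cdot\tilde K=0$ and $R\cdot K=\tfrac n{n+2}R\cdot\Phi$ expresses $R\cdot\Phi$ through $d\rho$ and $g$ alone. Then $R\cdot\Phi$ is computed directly from (i). I would compare these two expressions, after taking a trace and pairing against $\tilde K$.

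I expect this extraction of $d\rho=0$ to be the main obstacle. If it resists, I would try to show that $\nabla\rho$ lies in the kernel of a nondegenerate map built from $\tilde K$.

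\textbf{Step 4 (second assertion, $\sec_g\le1$).} By the dichotomy it remains to treat $\tilde K=0$, that is, $K=\tfrac n{n+2}\Phi$. Write $t_X=g(T,X)$ and $t_Y=g(T,Y)$ for orthonormal $X,Y$. From \eqref{eqn:2.26},
\[
\sec(X,Y)=1+g(K_XX,K_YY)-\|K_XY\|^2 .
\]
Evaluating with $K_XX=\tfrac n{n+2}(2t_XX+T)$, $K_YY=\tfrac n{n+2}(2t_YY+T)$ and $K_XY=\tfrac n{n+2}(t_XY+t_YX)$, a short calculation yields
\[
\sec(X,Y)=1+\tfrac{n^2}{(n+2)^2}\bigl(t_X^2+t_Y^2+\|T\|^2\bigr)\ \ge 1 .
\]
Therefore $\sec_g\le1$ forces $T=0$. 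Then $K=0$, so $\nabla K=0$ in this case as well.
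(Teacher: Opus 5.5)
\textbf{Overall.} Your Steps 1, 2 and 4 are correct and agree with the paper; your sectional curvature formula in Step 4 is exactly \eqref{eqn:4.111}. The gap is Step 3, which carries the whole first assertion. You never derive $d\rho=0$ on the set where $\tilde K\neq0$; you only describe the form you hope the identity will take. Neither of your sketched routes gets there as described.

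\textbf{Your ``alternative route'' is vacuous.} Once $\nabla\tilde K=0$, you have $\nabla K=\tfrac{n}{n+2}\nabla\Phi$ identically, with $\nabla\Phi=\rho\,\Sigma$, where $\Sigma(X,Y,Z):=g(X,Y)Z+g(X,Z)Y+g(Y,Z)X$. Moreover, $R\cdot\Phi$ depends only on $R(\cdot,\cdot)T$:
$(R(U,X)\cdot\Phi)(Y,Z)=g(Y,Z)R(U,X)T+g(R(U,X)T,Y)Z+g(R(U,X)T,Z)Y$.
Inserting (i) gives $(U\rho)\Sigma(X,Y,Z)-(X\rho)\Sigma(U,Y,Z)$. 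This is exactly what you get by differentiating $\nabla\Phi=\rho\Sigma$ and antisymmetrizing. So the two expressions you propose to compare coincide by the Ricci identity for $\Phi$, neither contains $\tilde K$, and pairing against $\tilde K$ yields $0=0$.

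\textbf{Your main route loses $d\rho$.} Substituting the Gauss equation $R(U,X)=U\wedge X+[K_U,K_X]$ into $R\cdot\tilde K=0$ replaces the curvature by expressions in $K$ alone. Then $d\rho$ drops out of that equation, and you give no mechanism for the claimed cancellation of the quadratic $\tilde K$ terms.

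\textbf{The missing idea.} Put $T$ into a \emph{curvature} slot of $R\cdot\tilde K=0$, and evaluate $R(T,X)$ through (i) using pair symmetry rather than the Gauss equation. From $g(R(T,X)Y,Z)=g(R(Y,Z)T,X)$ you get
$R(T,X)Y=(Y\rho)X-g(X,Y)\nabla\rho$.
Then $(R(T,X)\cdot\tilde K^\flat)(Y,Z,W)=0$ is precisely the identity \eqref{eqn:4.108}.
\begin{itemize}
\item Tracing over $X=Y$ and using tracelessness gives $(n+1)\tilde K^\flat(\nabla\rho,\cdot,\cdot)=0$.
\item Taking $Y=\nabla\rho$ then gives $\|\nabla\rho\|^2\tilde K^\flat=0$.
\item Since $\tilde K$ vanishes nowhere, $\nabla\rho=0$, and your Step 2 finishes the argument.
\end{itemize}

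\textbf{Comparison with the paper.} The paper obtains the same identity by Lie differentiating $\nabla\tilde K^\flat=0$ along $T$. It uses $\mathcal L_T\tilde K^\flat=3\rho\tilde K^\flat$ and $(\mathcal L_T\nabla)_XY=(X\rho)Y+(Y\rho)X-g(X,Y)\nabla\rho$. The $X\rho$ terms cancel, and what remains is exactly $R(T,X)\cdot\tilde K^\flat=0$. So your ingredients (i) and (ii) were sufficient. What is missing is the choice of slot and the pair-symmetry step. Without them the first assertion is unproved, and so is the reduction on which Step 4 relies.
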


\begin{proof}
Suppose first that $\tilde K\ne0$. Since $\tilde K$ is parallel, it is
nowhere zero. Put $V:=\operatorname{grad}\rho$. We first compute the Lie
derivative of $\tilde K^\flat$ along $T$. By definition,
\begin{equation}\label{eqn:4.100}
\begin{aligned}
(\mathcal L_T\tilde K^\flat)(X,Y,Z)
={}&T(\tilde K^\flat(X,Y,Z))-\tilde K^\flat([T,X],Y,Z)\\
&-\tilde K^\flat(X,[T,Y],Z)-\tilde K^\flat(X,Y,[T,Z]).
\end{aligned}
\end{equation}
Since $[T,X]=\nabla_TX-\nabla_XT$, the terms involving $\nabla_TX$,
$\nabla_TY$ and $\nabla_TZ$ cancel. Applying $\nabla\tilde K^\flat=0$
and $\nabla_XT=\rho X$, we deduce that
\begin{equation}\label{eqn:4.101}
\begin{aligned}
(\mathcal L_T\tilde K^\flat)(X,Y,Z)
={}&(\nabla_T\tilde K^\flat)(X,Y,Z)
+\tilde K^\flat(\nabla_XT,Y,Z)\\
&+\tilde K^\flat(X,\nabla_YT,Z)
+\tilde K^\flat(X,Y,\nabla_ZT)
=3\rho\tilde K^\flat(X,Y,Z).
\end{aligned}
\end{equation}
For the connection $\nabla$, its Lie derivative is the $(1,2)$-tensor
defined by the first line below. Expanding the Lie brackets and using
\eqref{eqn:2.11}, we find (cf. \cite[p. 9]{Yan57})
\begin{equation}\label{eqn:4.102}
\begin{aligned}
(\mathcal L_T\nabla)_XY
&=[T,\nabla_XY]-\nabla_{[T,X]}Y-\nabla_X[T,Y]\\
&=R(T,X)Y+\nabla_X\nabla_YT-\nabla_{\nabla_XY}T\\
&=R(T,X)Y+(X\rho)Y.
\end{aligned}
\end{equation}
To evaluate the curvature term, we again use $\nabla T=\rho I$ to obtain
\begin{equation}\label{eqn:4.103}
\begin{aligned}
R(Y,Z)T
&=\nabla_Y(\rho Z)-\nabla_Z(\rho Y)-\rho[Y,Z]
=(Y\rho)Z-(Z\rho)Y,\\
g(R(T,X)Y,Z)
&=g(R(Y,Z)T,X)
=(Y\rho)g(X,Z)-g(X,Y)g(V,Z).
\end{aligned}
\end{equation}
Since $Z$ is arbitrary, $R(T,X)Y=(Y\rho)X-g(X,Y)V$. Substitution into
\eqref{eqn:4.102} gives
\begin{equation}\label{eqn:4.104}
(\mathcal L_T\nabla)_XY
=(X\rho)Y+(Y\rho)X-g(X,Y)V.
\end{equation}

We now compute the Lie derivative of $\nabla\tilde K^\flat=0$ along $T$
as follows (cf. \cite[p. 16]{Yan57}):
\begin{equation}\label{eqn:4.105}
\begin{aligned}
0=~&(\mathcal L_T(\nabla\tilde K^\flat))(X,Y,Z,W)\\
=~&(\nabla_X(\mathcal L_T\tilde K^\flat))(Y,Z,W)
-\tilde K^\flat((\mathcal L_T\nabla)_XY,Z,W)\\
~&-\tilde K^\flat(Y,(\mathcal L_T\nabla)_XZ,W)
-\tilde K^\flat(Y,Z,(\mathcal L_T\nabla)_XW).
\end{aligned}
\end{equation}
By \eqref{eqn:4.101} and $\nabla\tilde K^\flat=0$,
\begin{equation}\label{eqn:4.106}
\begin{aligned}
\nabla_X(\mathcal L_T\tilde K^\flat)
=3(X\rho)\tilde K^\flat+3\rho\nabla_X\tilde K^\flat
=3(X\rho)\tilde K^\flat.
\end{aligned}
\end{equation}
Expanding the three connection terms by \eqref{eqn:4.104}, we calculate
that
\begin{equation}\label{eqn:4.107}
\begin{aligned}
0={}&3(X\rho)\tilde K^\flat(Y,Z,W)\\
&-\{(X\rho)\tilde K^\flat(Y,Z,W)
+(Y\rho)\tilde K^\flat(X,Z,W)
-g(X,Y)\tilde K^\flat(V,Z,W)\}\\
&-\{(X\rho)\tilde K^\flat(Y,Z,W)
+(Z\rho)\tilde K^\flat(Y,X,W)
-g(X,Z)\tilde K^\flat(Y,V,W)\}\\
&-\{(X\rho)\tilde K^\flat(Y,Z,W)
+(W\rho)\tilde K^\flat(Y,Z,X)
-g(X,W)\tilde K^\flat(Y,Z,V)\},
\end{aligned}
\end{equation}
and hence
\begin{equation}\label{eqn:4.108}
\begin{aligned}
&(Y\rho)\tilde K^\flat(X,Z,W)+(Z\rho)\tilde K^\flat(X,Y,W)
+(W\rho)\tilde K^\flat(X,Y,Z)\\
&=g(X,Y)\tilde K^\flat(V,Z,W)+g(X,Z)\tilde K^\flat(V,Y,W)
+g(X,W)\tilde K^\flat(V,Y,Z).
\end{aligned}
\end{equation}
Set $X=Y=e_i$ and sum over $1\leq i\leq n$. Since $\tilde K^\flat$ is
traceless,
\begin{equation}\label{eqn:4.109}
\begin{aligned}
\tilde K^\flat(V,Z,W)
=(n+2)\tilde K^\flat(V,Z,W).
\end{aligned}
\end{equation}
Thus $\tilde K^\flat(V,\cdot,\cdot)=0$. Substituting $Y=V$ into
\eqref{eqn:4.108} now gives $\|V\|^2\tilde K^\flat(X,Z,W)=0$,
so $V=0$. The constant $\rho$ vanishes by the divergence theorem:
\begin{equation}\label{eqn:4.110}
0=\int_{M^n}\operatorname{div}T\,dV
=n\rho\operatorname{Vol}(M^n).
\end{equation}
Therefore $\nabla T=0$ and $\nabla K=0$ by \eqref{eqn:2.31}.

If $\tilde K=0$, substituting \eqref{eqn:2.29} into the Gauss equation
gives the following identity for orthonormal $X,Y$:
\begin{equation}\label{eqn:4.111}
\sec_g(X,Y)
=1+\tfrac{n^2}{(n+2)^2}
\{\|T\|^2+g(T,X)^2+g(T,Y)^2\}.
\end{equation}
The upper bound $\sec_g\leq1$ forces $T=0$ and hence $K=0$. Thus the
additional upper bound also gives $\nabla K=0$ in the remaining case.
Hence Lemma \ref{lem:4.6} has been proved.
\end{proof}

\begin{lemma}\label{lem:4.7}
Let $M^n$ $(n\geq2)$ be a Legendrian submanifold in the unit sphere
$\mathbb S^{2n+1}$ with $C$-parallel second fundamental form. If its
sectional curvature satisfies $\sec_g\leq1$, then $M^n$ is either
totally geodesic or flat.
\end{lemma}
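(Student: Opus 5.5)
By \eqref{eqn:2.28}, the hypothesis that $h$ is $C$-parallel, i.e.\ $\bar\nabla^\xi h=0$, is \emph{equivalent} to $\nabla K=0$. So from now on I work with the genuinely parallel cubic form $K^\flat$; this reduction is the content of \eqref{eqn:2.28} and needs no extra assumption.

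From $\nabla K=0$, the Ricci identity \eqref{eqn:2.22} gives $R\cdot K=0$. The Gauss equation \eqref{eqn:2.26} then gives $\nabla R=0$. Parallel transport preserves $K$, so the function
\[
\lambda(p):=\max\{K^\flat(v,v,v):v\in U_pM^n\}
\]
is constant on $M^n$.

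\emph{Case $\lambda=0$.} Then $K^\flat(v,v,v)\le0$ for all $v$. Since $K^\flat$ is odd in $v$, this forces $K^\flat(v,v,v)=0$ for every $v$. By polarization of the totally symmetric $K^\flat$, we get $K=0$, so $M^n$ is totally geodesic.

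\emph{Case $\lambda>0$.} Fix $p$ and a maximizing unit vector $v$. The Lagrange condition gives $K_vv=\lambda v$, so $v^\perp$ is $K_v$-invariant. Choose an orthonormal eigenbasis $e_1,\dots,e_{n-1}$ of $K_v|_{v^\perp}$ with $K_ve_i=\mu_ie_i$. For $w=e_i$, the Gauss equation \eqref{eqn:2.26} together with $K_wv=K_vw=\mu_iw$ yields
\[
R(v,w)v=-(1+\lambda\mu_i-\mu_i^2)\,w,\qquad \sec(v,w)=1+\lambda\mu_i-\mu_i^2.
\]
Now evaluate $(R(v,w)\cdot K)(v,v)=0$, that is, $R(v,w)K(v,v)-2K(R(v,w)v,v)=0$. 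This gives
\[
\sec(v,e_i)\,(2\mu_i-\lambda)=0 .
\]
If $\mu_i=\lambda/2$, then $\sec(v,e_i)=1+\lambda^2/4>1$, which contradicts $\sec_g\le1$. Hence $\sec(v,e_i)=0$ for all $i$. In particular each $\mu_i$ is a root of $\mu^2-\lambda\mu-1=0$, so $\mu_i=\mu_\pm:=\tfrac12(\lambda\pm\sqrt{\lambda^2+4})$. The second-variation (maximality) condition $\lambda\ge2\mu_i$ excludes $\mu_+$. Therefore $K_v=\lambda\oplus\mu_-\,I_{v^\perp}$, and all planes containing $v$ are flat.

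The main obstacle is propagating flatness to planes inside $v^\perp$. My plan is an induction in the style of Dillen--Vrancken and the classification of $C$-parallel Legendrian submanifolds. The key observation is that $v^\perp$ is invariant under $K_v$. The Codazzi/Gauss relations and $R\cdot K=0$, applied to triples $(v,e_i,e_j)$, should show that the restricted form $K'(X,Y):=$ the $v^\perp$-component of $K(X,Y)$ again satisfies an equation of the same type, namely
\[
[K'_X,K'_Y]=\tilde\beta\, X\wedge Y+\cdots
\]
with a shifted constant determined by $\lambda,\mu_-$. I would then maximize $K'^\flat$ on the unit sphere of $v^\perp$ and repeat the computation above. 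This should produce an orthonormal basis in which $K$ has the weighted Clifford (``triangular'') form, with every $\sec(e_i,e_j)=0$.

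The delicate points are the following. First, I must check that the relations $R\cdot K=0$ and $\sec_g\le1$ pass to the restricted form with the correct signs. Second, I must rule out the analogue of the value $\mu=\lambda/2$ at each step; I expect it to be excluded again because it would force a sectional curvature $>1$. Once every coordinate plane is flat and the curvature tensor is determined by the Gauss equation in this basis, flatness at $p$, and hence everywhere, follows. This proves that $M^n$ is either totally geodesic or flat.
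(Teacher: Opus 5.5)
Your first step is correct and is the same as the paper's. With $v$ a maximizing direction you get $(R(v,w)\cdot K)(v,v)=\sec(v,w)(2\mu_i-\lambda)w$. The value $\mu_i=\lambda/2$ is then excluded by $\sec_g\le1$, and the second variation gives $K_v=\lambda\oplus\mu_-I_{v^\perp}$. The paper also runs the induction on orthogonal complements that you sketch. However, your proposal stops exactly where that induction has to be proved. The restricted equation carries an unspecified ``$\cdots$''. The descent of $R\cdot K=0$ and of the bound $\sec\le1$, and the exclusion of the bad root, are only announced as expectations. Your closing sentence is also insufficient: zero sectional curvature on the coordinate planes does not by itself give $R_p=0$, because the mixed components $g(R(e_i,e_j)e_k,e_l)$ are not controlled.

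The missing idea is that you need $R(X,Y)v=0$ for \emph{all} $X,Y$, not just $\sec(v,\cdot)=0$. This follows from $(R(X,Y)\cdot K)(v,v)=0$ for arbitrary $X,Y$. Since $R(X,Y)v\perp v$ and $K_v=\mu_-$ on $v^\perp$, that identity reads $(\lambda-2\mu_-)R(X,Y)v=\sqrt{\lambda^2+4}\,R(X,Y)v=0$. Alternatively, use the Gauss equation: $R(X,Y)v=\mu_-(K_XY-K_YX)=0$ for $X,Y\perp v$.

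Consequently every $R(X,Y)$ kills $v$ and preserves $v^\perp$. The projection $\pi$ onto $v^\perp$ therefore commutes with $R(X,Y)$, and $R'\cdot K'=\pi\circ(R\cdot K)=0$ for $K':=\pi K|_{v^\perp\times v^\perp}$.

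Next, for $X,Y\perp v$ write $K(X,Y)=\mu_-g(X,Y)v+K'(X,Y)$. The $v$-components of $[K_X,K_Y]Z$ cancel by total symmetry. So the restricted Gauss equation is exactly
$R'(X,Y)Z=(1+\mu_-^2)(X\wedge Y)Z+[K'_X,K'_Y]Z$, with no further terms. Its sectional curvature is still at most $1$, since $R'$ is a restriction of $R$.

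The fact that $c_1=1+\mu_-^2$ is strictly greater than $1$ is what drives the induction:
\begin{enumerate}
\item If $\dim v^\perp\ge2$, then $K'\ne0$, since otherwise the sectional curvature on $v^\perp$ would be $c_1>1$. So the next maximum $\nu$ is positive, and your case ``$\lambda=0$'' cannot recur.
\item The value $\beta=\nu/2$ would give sectional curvature $c_1+\nu^2/4>1$, so it is excluded.
\item The admissible root is $\beta=\tfrac12(\nu-\sqrt{\nu^2+4c_1})$. With it, the same computation gives $R'(X,Y)e=0$ for the new maximizing direction $e$.
\item The new constant is $c_2=c_1+\beta^2>1$, so the hypotheses reproduce at the next stage.
\end{enumerate}
Iterating, all curvature components involving each chosen direction vanish, which is what finally yields $R_p=0$.
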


\begin{proof}
With the help of \eqref{eqn:2.28}, it is known that the $C$-parallel condition
is equivalent to $\nabla K=0$. The Gauss equation \eqref{eqn:2.26} reads
\begin{equation}\label{eqn:4.112}
R(X,Y)Z=(X\wedge Y)Z+[K_X,K_Y]Z,
\end{equation}
where $(X\wedge Y)Z=g(Y,Z)X-g(X,Z)Y$.
If $K=0$, then $h=\varphi K=0$ and $M^n$ is totally geodesic. Otherwise,
$K$ is nowhere zero since it is parallel. At a fixed point $p$, choose a
unit vector $e_1$ maximizing $g(K(u,u),u)$ on the unit tangent sphere. Its
maximum $\mu_1$ is positive, since the cubic is odd and nonzero. The first
and second derivative tests give an orthonormal basis $\{e_i\}_{i=1}^n$
such that
\begin{equation}\label{eqn:4.113}
K(e_1,e_1)=\mu_1e_1,\ \
K(e_1,e_i)=\mu_i e_i,\ \
2\mu_i\leq\mu_1,\ \ 2\leq i\leq n.
\end{equation}
Indeed, one diagonalizes the self-adjoint map $K_{e_1}$ on $e_1^\perp$
and differentiates the cubic along $e_1\cos t+e_i\sin t$.

For $2\leq i\leq n$, put $k_i:=\sec_g(e_1,e_i)$. Substitution of \eqref{eqn:4.113}
into \eqref{eqn:4.112}, followed by the Ricci identity $R\cdot K=0$, gives
\begin{equation}\label{eqn:4.114}
\begin{aligned}
0&=(R(e_1,e_i)\cdot K)(e_1,e_1)\\
&=R(e_1,e_i)K(e_1,e_1)-2K(R(e_1,e_i)e_1,e_1)\\
&=(2\mu_i-\mu_1)k_i e_i,
\end{aligned}
\end{equation}
where $R(e_1,e_i)e_1=-(1+\mu_1\mu_i-\mu_i^2)e_i=-k_i e_i$.
If $\mu_i=\mu_1/2$, then $k_i=1+\mu_1^2/4>1$, contrary to the curvature
bound. Hence $k_i=0$, and $2\mu_i\leq\mu_1$ selects the smaller root:
\begin{equation}\label{eqn:4.115}
\mu_2=\cdots=\mu_n
=a:=\tfrac12(\mu_1-\sqrt{\mu_1^2+4})<0.
\end{equation}
For arbitrary $X,Y\in T_pM^n$, skew-symmetry gives
$R(X,Y)e_1\perp e_1$. Since $K(e_1,Z)=aZ$ for $Z\perp e_1$, the
identity $R\cdot K=0$ further implies
\begin{equation}\label{eqn:4.116}
\begin{aligned}
0&=(R(X,Y)\cdot K)(e_1,e_1)\\
&=(\mu_1I-2K_{e_1})R(X,Y)e_1
=\sqrt{\mu_1^2+4}\,R(X,Y)e_1.
\end{aligned}
\end{equation}
Thus $R(X,Y)e_1=0$, and the curvature symmetries give
\begin{equation}\label{eqn:4.117}
g(R(e_1,X)Y,Z)=g(R(Y,Z)e_1,X)=0.
\end{equation}
Hence every curvature component involving $e_1$ vanishes.

Let $\pi_1^\perp Z=Z-g(Z,e_1)e_1$ denote the orthogonal projection onto
$V_1=e_1^\perp$.
For $X,Y\in T_pM^n$ and $Z\in V_1$, we have $g(R(X,Y)Z,e_1)=
-g(Z,R(X,Y)e_1)=0$, so
\begin{equation}\label{eqn:4.118}
R(X,Y)Z\in V_1.
\end{equation}
For arbitrary $Z\in T_pM^n$, decomposing $Z=g(Z,e_1)e_1+\pi_1^\perp Z$
immediately gives
\begin{equation}\label{eqn:4.119}
\pi_1^\perp R(X,Y)Z=R(X,Y)Z=R(X,Y)\pi_1^\perp Z.
\end{equation}
Denote the restriction of $R$ to $V_1$ by $R^1$ and set $K^1:=\pi_1^\perp
\circ K|_{V_1\times V_1}$. For $X,Y\in V_1$, it follows that
\begin{equation}\label{eqn:4.120}
\begin{aligned}
K(X,Y)=a g(X,Y)e_1+K^1(X,Y).
\end{aligned}
\end{equation}
For $X,Y,Z\in V_1$, the cubic symmetry cancels the $e_1$ components in
$K_XK_YZ-K_YK_XZ$, and thus
\begin{equation}\label{eqn:4.121}
\relax[K_X,K_Y]Z=a^2(X\wedge Y)Z+[K^1_X,K^1_Y]Z.
\end{equation}
The restricted Gauss equation becomes
\begin{equation}\label{eqn:4.122}
R^1(X,Y)Z=(1+a^2)(X\wedge Y)Z+[K^1_X,K^1_Y]Z.
\end{equation}
According to \eqref{eqn:4.118} and \eqref{eqn:4.119}, for $X,Y,Z,W\in V_1$,
we calculate that
\begin{equation}\label{eqn:4.123}
\begin{aligned}
&(R^1(X,Y)\cdot K^1)(Z,W)\\
&=R(X,Y)\pi_1^\perp K(Z,W)-\pi_1^\perp K(R(X,Y)Z,W)
-\pi_1^\perp K(Z,R(X,Y)W)\\
&=\pi_1^\perp\bigl(R(X,Y)K(Z,W)-K(R(X,Y)Z,W)
-K(Z,R(X,Y)W)\bigr)\\
&=\pi_1^\perp((R(X,Y)\cdot K)(Z,W))=0.
\end{aligned}
\end{equation}
The cubic form $(K^1)^\flat$ is totally symmetric, and the sectional
curvature of $R^1$ is at most $1$. If $\dim V_1\geq2$, then $K^1\ne0$,
since otherwise \eqref{eqn:4.122} gives sectional curvature
$1+a^2>1$.

We proceed by induction on the dimension of
$V_r=\operatorname{span}\{e_1,\ldots,e_r\}^\perp$. Suppose that the
restricted curvature tensor $R^r$ and projected tensor $K^r$ satisfy
\begin{equation}\label{eqn:4.124}
R^r(X,Y)Z=c_r(X\wedge Y)Z+[K^r_X,K^r_Y]Z,\ \ R^r\cdot K^r=0,
\end{equation}
where $c_r>1$, the sectional curvature of $R^r$ is at most $1$, and
$(K^r)^\flat$ is totally symmetric. These conditions hold for $r=1$
with $c_1=1+a^2$. If $\dim V_r\geq2$, the curvature bound excludes
$K^r=0$. Choose a unit vector $e_{r+1}$ maximizing its cubic, with
maximum $\nu_r>0$. As in \eqref{eqn:4.113}, an orthonormal basis of
$V_r$ can be chosen such that
\begin{equation}\label{eqn:4.125}
\begin{aligned}
K^r(e_{r+1},e_{r+1})=\nu_r e_{r+1},\ \
K^r(e_{r+1},e_j)=\beta_j e_j,\ \
2\beta_j\leq\nu_r,
\end{aligned}
\end{equation}
where $r+2\leq j\leq n$. Put $k_j:=g(R^r(e_{r+1},e_j)e_j,e_{r+1})$.
Applying \eqref{eqn:4.124} and evaluating $R^r(e_{r+1},e_j)\cdot K^r$
on $(e_{r+1},e_{r+1})$, we find
\begin{equation}\label{eqn:4.126}
k_j=c_r+\nu_r\beta_j-\beta_j^2\leq1,\ \
(2\beta_j-\nu_r)k_j=0.
\end{equation}
The value $\beta_j=\nu_r/2$ guarantees that $k_j=c_r+\nu_r^2/4>1$.
Hence $k_j=0$, and the smaller root is selected by
$2\beta_j\leq\nu_r$:
\begin{equation}\label{eqn:4.127}
\beta_{r+2}=\cdots=\beta_n
=a_{r+1}:=\tfrac12(\nu_r-\sqrt{\nu_r^2+4c_r})<0.
\end{equation}
For arbitrary $X,Y\in V_r$, the argument in \eqref{eqn:4.116} now yields
\begin{equation}\label{eqn:4.128}
\begin{aligned}
0&=(R^r(X,Y)\cdot K^r)(e_{r+1},e_{r+1})\\
&=(\nu_r-2a_{r+1})R^r(X,Y)e_{r+1}
=\sqrt{\nu_r^2+4c_r}\,R^r(X,Y)e_{r+1}.
\end{aligned}
\end{equation}
Thus every curvature component involving $e_{r+1}$ vanishes.
Restricting to $V_{r+1}=V_r\cap e_{r+1}^\perp$ and projecting $K^r$
as in \eqref{eqn:4.120}--\eqref{eqn:4.123}, we obtain
\begin{equation}\label{eqn:4.129}
\begin{aligned}
&R^{r+1}(X,Y)Z=c_{r+1}(X\wedge Y)Z
+[K^{r+1}_X,K^{r+1}_Y]Z,\\
&c_{r+1}=c_r+a_{r+1}^2>c_r>1,\ \ R^{r+1}\cdot K^{r+1}=0.
\end{aligned}
\end{equation}
The cubic symmetry and the curvature upper bound are also preserved,
completing the induction step. On the final one dimensional space
$V_{n-1}$, we conclude that the curvature tensor $R^{n-1}$ vanishes.
All components involving the preceding maximizing directions have
already vanished and thus $R_p=0$. Since $p$ was arbitrary, $M^n$
is flat. This completes the proof of Lemma \ref{lem:4.7}.
\end{proof}

\section{Proofs of the main theorems}\label{sect:5}

We combine the properties established in Section \ref{sect:4} with the
calculations in Section \ref{sect:3} to complete the proofs of the two
main theorems. The following theorem identifies the flat Legendrian
submanifolds in $\mathbb S^{2n+1}$ with $C$-parallel second fundamental
form.

\begin{theorem}\label{thm:5.1}
Let $M^n$ $(n\geq2)$ be a flat Legendrian submanifold in the unit
sphere $\mathbb S^{2n+1}$ with $C$-parallel second fundamental form.
Then $M^n$ is locally congruent to a weighted Clifford Legendrian
immersion $\Psi_\alpha$ given in Example \ref{exa:1.2}.
\end{theorem}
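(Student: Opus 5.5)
Since $M^n$ is flat with $C$-parallel second fundamental form, \eqref{eqn:2.28} gives $\nabla K=0$, and the Gauss equation \eqref{eqn:2.26} with $c=0$ reduces to $[K_X,K_Y]=-X\wedge Y$. The plan is to obtain a pointwise algebraic normal form for the parallel cubic $K^\flat$, extend it to a parallel flat frame, and then integrate the structure equations to recover $\Psi_\alpha$.

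\textbf{Algebraic step.} Flatness means $\mathbb R^n$ carries a totally symmetric cubic $K$ with $[K_X,K_Y]=-X\wedge Y$. I would realize this in dimension $n+1$. Set $W:=T_pM^n\oplus\mathbb R\nu$ and define a symmetric product $X\ast Y:=K(X,Y)+g(X,Y)\nu$, $X\ast\nu=\nu\ast X:=X$, $\nu\ast\nu:=\nu$, together with the extended inner product. The Gauss identity $[K_X,K_Y]Z=g(X,Z)Y-g(Y,Z)X$ is exactly what makes the multiplication operators $L_X$ commute on $W$. Then $(W,\ast)$ is a commutative, associative, unital algebra with unit $\nu$ and associative inner product $\langle a\ast b,c\rangle=\langle a,b\ast c\rangle$. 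Because the form is positive definite, the algebra has no nilpotents, so it is isomorphic to $\mathbb R^{n+1}$ with an orthogonal basis of idempotents $f_1,\dots,f_{n+1}$. Writing $\nu=\sum f_a$ and $1=\|\nu\|^2=\sum\|f_a\|^2$, put $\alpha_a:=\|f_a\|>0$, so $\sum\alpha_a^2=1$. With $\epsilon_a:=f_a/\alpha_a$ we have $\epsilon_a\ast\epsilon_b=\delta_{ab}\epsilon_a/\alpha_a$. Expressing $e_j\in\nu^\perp$ as $e_j=\sum_aq_{aj}\epsilon_a$, the coefficients satisfy $\sum_a\alpha_aq_{aj}=0$, so $\{q_j,\alpha\}$ is orthonormal, and
\[
g(K(e_j,e_k),e_\ell)=\sum_{a=1}^{n+1}\frac{q_{aj}q_{ak}q_{a\ell}}{\alpha_a}.
\]
This agrees with \eqref{eqn:3.8}. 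I expect checking associativity and the commutativity of the $L_X$ to be the main computational point; the rest of this step is the standard structure theory of semisimple commutative algebras.

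\textbf{Parallel extension.} Because $\nabla K=0$ and $M^n$ is flat, I would take a local parallel orthonormal frame $E_j$, in which the coefficients $K_{jk\ell}$ are constant. The idempotent decomposition depends only on these coefficients, so $\alpha_a$ and $q_{aj}$ are constant; in particular $\alpha$ is locally constant. This gives $\nabla_{E_j}E_k=0$, $h(E_j,E_k)=\sum_\ell K_{jk\ell}\varphi E_\ell$, and $\nabla^\perp\varphi E_\ell=\delta\,\xi$ as in \eqref{eqn:2.4}. These are the same structure data as those computed for $\Psi_\alpha$ in the proof of Proposition \ref{pro:3.1}.

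\textbf{Rigidity.} The final step is an appeal to uniqueness. Both $M^n$ and $\Psi_\alpha$ are Legendrian with the same first fundamental form and second fundamental form under a linear isometry of frames, and the same normal connection, since for Legendrian submanifolds the normal connection is determined by $\varphi$ and $\nabla$. The fundamental theorem of submanifolds in the sphere, in its Legendrian form (cf.\ \cite{Bla10}), then makes $M^n$ locally congruent to $\Psi_\alpha$. Equivalently, one can integrate the frame ODE in $\mathbb C^{n+1}$ directly: the vector fields $\epsilon$-combinations $\sum_j q_{aj}E_j$ produce the coordinates $e^{\mathrm i x_a}$, which exhibits the map explicitly as $\Psi_\alpha$.
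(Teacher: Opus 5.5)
Your proof is correct, but it takes a genuinely different route from the paper. The paper gets a pointwise normal form by successively maximizing the cubic $g(K(u,u),u)$ on nested orthogonal complements, which gives the triangular form \eqref{eqn:5.9}--\eqref{eqn:5.10}. It then passes to a parallel frame and integrates the constant-coefficient system \eqref{eqn:5.12} in $\mathbb C^{n+1}$ by induction, tracking the Hermitian orthogonality and norms of the constant vectors $A_\alpha$, until $F=(\alpha_1e^{\mathrm{i}x_1},\ldots,\alpha_{n+1}e^{\mathrm{i}x_{n+1}})$ emerges with the explicit phases \eqref{eqn:5.36}.

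You instead encode flatness and total symmetry in a commutative unital algebra on $T_pM^n\oplus\mathbb R\nu$ with a positive definite invariant inner product. Your verification goes through: the $\nu$-component of $[L_X,L_Y]Z$ is $g(X,K_YZ)-g(Y,K_XZ)=0$, the tangential part is $([K_X,K_Y]+X\wedge Y)Z=0$, and for a commutative product, commuting multiplication operators give associativity. One step needs tightening: \emph{no nilpotents} alone only yields $\mathbb R^s\times\mathbb C^t$. The $\mathbb C$-factors are excluded by the same positivity, e.g.\ via $\langle a\ast a,\nu\rangle=\|a\|^2$, or because the $L_a$ are self-adjoint and so have real spectrum.

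Your route produces the symmetric normal form \eqref{eqn:3.8} in one stroke and identifies the weights intrinsically as norms of the primitive idempotents, so $\alpha$ is determined by $K$ at a single point, up to order. It replaces the inductive integration by the uniqueness half of the fundamental theorem of submanifolds, applied against Proposition \ref{pro:3.1}. This is legitimate: both sides have parallel frames with the same constants $K_{jk\ell}$, and for any Legendrian submanifold the normal connection is fixed by $\nabla^\perp_X\varphi Y=\varphi\nabla_XY+g(X,Y)\xi$ and $\nabla^\perp_X\xi=-\varphi X$. The resulting ambient isometry matches $\xi=-\mathrm{i}F$ and $\varphi E_j=\mathrm{i}E_j$, so it is even unitary. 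The paper's argument is more elementary and self-contained, needing neither algebra structure theory nor a Bonnet-type theorem, and it constructs the immersion explicitly, at the price of much heavier bookkeeping.
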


\begin{proof}
According to \eqref{eqn:2.28}, the $C$-parallel condition is equivalent to
$\nabla K=0$. Fixing a point $x\in M^n$, we know from the Gauss equation
\eqref{eqn:2.26} that $K(x)\ne0$. Let $U_xM^n$ be the unit sphere in $T_x
M^n$ and define $f_1(u):=g(K(u,u),u)$ on $U_xM^n$. Since the cubic symmetric
form determines $K$ by polarization, the function $f_1$ is odd and not
identically zero. Therefore, it attains a positive maximum $\lambda_1$
at a unit vector $e_1$. The first and second variation formulas, followed
by diagonalization of the self-adjoint map $K_{e_1}$ on $e_1^\perp$, give
an orthonormal basis $\{e_i\}_{i=1}^n$ such that
\begin{equation}\label{eqn:5.1}
K(e_1,e_1)=\lambda_1e_1,\ \
K(e_1,e_j)=\lambda_{1j}e_j,\ \
\lambda_1\geq2\lambda_{1j},\ \ 2\leq j\leq n,
\end{equation}
where $\lambda_{1j}$ are the eigenvalues of $K_{e_1}$.
On each plane spanned by $e_1,e_j$ with $j\geq2$, the Gauss equation reads
\begin{equation}\label{eqn:5.2}
0=g(R(e_1,e_j)e_j,e_1)
=1+\lambda_1\lambda_{1j}-\lambda_{1j}^2.
\end{equation}
The inequality in \eqref{eqn:5.1} selects the smaller root. Consequently,
\begin{equation}\label{eqn:5.3}
\lambda_{12}=\cdots=\lambda_{1n}
=\tfrac12\bigl(\lambda_1-\sqrt{\lambda_1^2+4}\,\bigr)
=:\mu_1<0.
\end{equation}

We next repeat this construction on successive orthogonal complements.
Suppose that $2\leq k\leq n-1$ and that $e_1,\ldots,e_{k-1}$ have been
chosen. Put $T_x^kM^n:=\operatorname{span}\{e_1,\ldots,e_{k-1}\}^\perp$,
\begin{equation}\label{eqn:5.4}
K^{(k)}(X,Y):=\pi_{k-1}^\perp K(X,Y),\ \ X,Y\in T_x^kM^n,
\end{equation}
where $\pi_{k-1}^\perp X:=X-\sum_{j=1}^{k-1}g(X,e_j)e_j$ is the orthogonal
projection onto $T_x^kM^n$. It is obvious that the cubic form $g(K^{(k)}(X,Y),
Z)$ is totally symmetric. Choose a unit vector $e_k\in T_x^kM^n$ at which
$g(K^{(k)}(u,u),u)$ attains its maximum $\lambda_k$. The first and second
variations show that
\begin{equation}\label{eqn:5.5}
g(K^{(k)}(e_k,e_k),v)=0,\ \
2g(K^{(k)}(e_k,v),v)\leq\lambda_k
\end{equation}
for every unit vector $v\in T_x^kM^n$ orthogonal to $e_k$. Diagonalizing
$K^{(k)}_{e_k}$ on the orthogonal complement of $e_k$ and using the total
symmetry, we obtain
\begin{equation}\label{eqn:5.6}
\begin{aligned}
K(e_k,e_k)=\sum_{m=1}^{k-1}\mu_me_m+\lambda_ke_k,\ \
K(e_k,e_j)=\lambda_{kj}e_j,\ \ j>k.
\end{aligned}
\end{equation}
The Gauss equation on the plane spanned by $e_k,e_j$ now becomes
\begin{equation}\label{eqn:5.7}
0=1+\sum_{m=1}^{k-1}\mu_m^2
+\lambda_k\lambda_{kj}-\lambda_{kj}^2,\ \ j>k.
\end{equation}
The smaller root of \eqref{eqn:5.7} is selected by \eqref{eqn:5.5},
that is,
\begin{equation}\label{eqn:5.8}
\lambda_{kj}
=\tfrac12\big(\lambda_k-
\sqrt{\lambda_k^2+4\bigl(1+\mu_1^2+\cdots+\mu_{k-1}^2\bigr)}\,\big)
=:\mu_k<0,\ \ j>k.
\end{equation}
Moreover, $\lambda_k>0$. Indeed, if $\lambda_k=0$, the odd cubic $g(K^{(k)}(u,
u),u)$ vanishes identically. Polarization then gives $K^{(k)}=0$, contradicting
\eqref{eqn:5.7}. Choose a unit vector $e_n$ in the remaining one dimensional
orthogonal complement and put $\lambda_n:=g(K(e_n,e_n),e_n)$. For $n=2$, the
total symmetry directly yields $K(e_2,e_2)=\mu_1e_1+\lambda_2e_2$. We then
obtain
\begin{equation}\label{eqn:5.9}
\begin{cases}
K(e_1,e_1)=\lambda_1e_1,\ \ K(e_1,e_k)=\mu_1e_k,\\
K(e_k,e_k)=\mu_1e_1+\cdots+\mu_{k-1}e_{k-1}+\lambda_ke_k,\\
K(e_i,e_j)=\mu_i e_j,\ \ 2\leq i<j\leq n,\ \ 2\leq k\leq n,
\end{cases}
\end{equation}
where, after setting $q_k:=1+\sum_{j=1}^{k-1}\mu_j^2$, the coefficients
satisfy
\begin{equation}\label{eqn:5.10}
\mu_k(\lambda_k-\mu_k)=-q_k,\ \
\lambda_k>0,\ \ \mu_k<0,\ \ 1\leq k\leq n-1.
\end{equation}

After shrinking to a simply connected flat neighborhood of $x$, we extend $\{e_i
\}_{i=1}^n$ by path independent parallel translation to a parallel orthonormal
frame $\{E_i\}_{i=1}^n$. Since $K$ is parallel, its algebraic normal form is
preserved and all coefficients in \eqref{eqn:5.9} are constant. In this situation,
it holds that
\begin{equation}\label{eqn:5.11}
\nabla_{E_i}E_j=0,\ \ 1\leq i,j\leq n.
\end{equation}
Introduce flat coordinates $(w_1,\ldots,w_n)$ with $E_i:=\partial/\partial w_i$.
Let $F$ denote the Legendrian immersion of this neighborhood into $\mathbb S^{2n+1}
\subset\mathbb C^{n+1}$. Setting $F_{w_j}:=\partial F/\partial w_j$ and $F_{w_jw_k}
:=\partial^2F/\partial w_j\partial w_k$, by applying the Gauss formula we see from
\eqref{eqn:5.9} and \eqref{eqn:5.11} that
\begin{equation}\label{eqn:5.12}
\begin{aligned}
&F_{w_jw_k}=\mu_j\mathrm{i}F_{w_k},\ \ 1\leq j<k\leq n,\\
&F_{w_\ell w_\ell}
=\sum_{j=1}^{\ell-1}\mu_j\mathrm{i}F_{w_j}
+\lambda_\ell\mathrm{i}F_{w_\ell}-F,\ \ 1\leq\ell\leq n.
\end{aligned}
\end{equation}

Since $\mu_1(\lambda_1-\mu_1)=-1$, the characteristic roots of the equation for
$F_{w_1w_1}$ are $\mathrm{i}\mu_1$ and $\mathrm{i}(\lambda_1-\mu_1)$. Hence
\begin{equation}\label{eqn:5.13}
F=\widetilde B_1(w_2,\ldots,w_n)
e^{\mathrm{i}(\lambda_1-\mu_1)w_1}
+B_2(w_2,\ldots,w_n)e^{\mathrm{i}\mu_1w_1}.
\end{equation}
Substituting \eqref{eqn:5.13} into the mixed equations gives
\begin{equation}\label{eqn:5.14}
0=F_{w_1w_k}-\mu_1\mathrm{i}F_{w_k}
=\mathrm{i}(\lambda_1-2\mu_1)
\tfrac{\partial\widetilde B_1}{\partial w_k}
e^{\mathrm{i}(\lambda_1-\mu_1)w_1},\ \ 2\leq k\leq n.
\end{equation}
Since $\lambda_1-2\mu_1=\sqrt{\lambda_1^2+4}>0$, we have $\partial\widetilde B_1
/\partial w_k=0$ for $2\leq k\leq n$. As a result, $\widetilde B_1$ is a constant
vector, which we denote by $A_1$, and
\begin{equation}\label{eqn:5.15}
F=A_1e^{\mathrm{i}(\lambda_1-\mu_1)w_1}
+B_2(w_2,\ldots,w_n)e^{\mathrm{i}\mu_1w_1}.
\end{equation}
We apply the Euclidean inner product $\langle\cdot,\cdot\rangle$ on $\mathbb
C^{n+1}\cong\mathbb R^{2n+2}$. Expanding $\langle F,F\rangle=1$ in \eqref{eqn:5.15},
we compute
\begin{equation}\label{eqn:5.16}
\begin{aligned}
1={}&\|A_1\|^2+\|B_2\|^2
+2\cos\bigl((\lambda_1-2\mu_1)w_1\bigr)\langle A_1,B_2\rangle\\
&-2\sin\bigl((\lambda_1-2\mu_1)w_1\bigr)
\langle A_1,\mathrm{i}B_2\rangle.
\end{aligned}
\end{equation}
For fixed $w_2,\ldots,w_n$, the vectors $A_1$ and $B_2$ are independent
of $w_1$. Since $\lambda_1-2\mu_1\ne0$, the coefficients of the sine and
cosine terms must vanish. Hence
\begin{equation}\label{eqn:5.17}
\langle A_1,B_2\rangle
=\langle A_1,\mathrm{i}B_2\rangle=0,\ \
\|A_1\|^2+\|B_2\|^2=1.
\end{equation}
Consequently, by the Legendrian condition, it can be checked that
\begin{equation}\label{eqn:5.18}
\begin{aligned}
0=\langle F_{w_1},\mathrm{i}F\rangle
=(\lambda_1-\mu_1)\|A_1\|^2+\mu_1\|B_2\|^2.
\end{aligned}
\end{equation}
Note from $\mu_1(\lambda_1-\mu_1)=-1$ that this system has the following solution:
\begin{equation}\label{eqn:5.19}
\|A_1\|^2=\tfrac{\mu_1^2}{q_1q_2},\ \
\|B_2\|^2=\tfrac{1}{q_2}.
\end{equation}

We now proceed by induction. Suppose that $2\leq k\leq n-1$ and
\begin{equation}\label{eqn:5.20}
\begin{aligned}
F=\sum_{\alpha=1}^{k-1}
A_\alpha e^{\mathrm{i}\sum_{j=1}^{\alpha-1}\mu_jw_j
+\mathrm{i}(\lambda_\alpha-\mu_\alpha)w_\alpha}
+B_k(w_k,\ldots,w_n)e^{\mathrm{i}\sum_{j=1}^{k-1}\mu_jw_j}.
\end{aligned}
\end{equation}
Assume also that $A_1,\ldots,A_{k-1},B_k$ and their images under multiplication
by $\mathrm{i}$ are mutually orthogonal and satisfy
\begin{equation}\label{eqn:5.21}
\|A_\alpha\|^2
=\tfrac{\mu_\alpha^2}{q_\alpha q_{\alpha+1}},
\ \ 1\leq\alpha\leq k-1,\ \
\|B_k\|^2=\tfrac{1}{q_k}.
\end{equation}
The equation for $F_{w_kw_k}$ in \eqref{eqn:5.12} can be written as
\begin{equation}\label{eqn:5.22}
\tfrac{\partial^2B_k}{\partial w_k^2}
-\lambda_k\mathrm{i}\tfrac{\partial B_k}{\partial w_k}
+q_kB_k=0.
\end{equation}
Based on \eqref{eqn:5.10}, its characteristic roots are $\mathrm{i}\mu_k$
and $\mathrm{i}(\lambda_k-\mu_k)$. Therefore,
\begin{equation}\label{eqn:5.23}
\begin{aligned}
B_k(w_k,\ldots,w_n)
=\widetilde B_k(w_{k+1},\ldots,w_n)
e^{\mathrm{i}(\lambda_k-\mu_k)w_k}
+B_{k+1}(w_{k+1},\ldots,w_n)e^{\mathrm{i}\mu_kw_k},
\end{aligned}
\end{equation}
and the mixed equations $F_{w_kw_\ell}=\mu_k\mathrm{i}F_{w_\ell}$ yield
\begin{equation}\label{eqn:5.24}
\begin{aligned}
0=\tfrac{\partial^2B_k}{\partial w_k\partial w_\ell}
-\mu_k\mathrm{i}\tfrac{\partial B_k}{\partial w_\ell}
=\mathrm{i}(\lambda_k-2\mu_k)
\tfrac{\partial\widetilde B_k}{\partial w_\ell}
e^{\mathrm{i}(\lambda_k-\mu_k)w_k},\ \ k<\ell\leq n.
\end{aligned}
\end{equation}
Since $\lambda_k-2\mu_k=\sqrt{\lambda_k^2+4q_k}>0$, it follows that
$\partial\widetilde B_k/\partial w_\ell=0$ for $k<\ell\leq n$.
Thus $\widetilde B_k$ is a constant vector, say $A_k$, and
\eqref{eqn:5.20} becomes
\begin{equation}\label{eqn:5.25}
\begin{aligned}
F=\sum_{\alpha=1}^{k}
A_\alpha e^{\mathrm{i}\sum_{j=1}^{\alpha-1}\mu_jw_j
+\mathrm{i}(\lambda_\alpha-\mu_\alpha)w_\alpha}
+B_{k+1}(w_{k+1},\ldots,w_n)
e^{\mathrm{i}\sum_{j=1}^{k}\mu_jw_j}.
\end{aligned}
\end{equation}
Expanding the norm $\|B_k\|^2=1/q_k$ in \eqref{eqn:5.23} as in \eqref{eqn:5.16},
and using $\lambda_k-2\mu_k\ne0$, we get
\begin{equation}\label{eqn:5.26}
\langle A_k,B_{k+1}\rangle
=\langle A_k,\mathrm{i}B_{k+1}\rangle=0.
\end{equation}
The norm identity and the Legendrian condition therefore give
\begin{equation}\label{eqn:5.27}
\begin{aligned}
\|A_k\|^2+\|B_{k+1}\|^2=\tfrac{1}{q_k},\ \
(\lambda_k-\mu_k)\|A_k\|^2+\mu_k\|B_{k+1}\|^2=0.
\end{aligned}
\end{equation}
The solution of \eqref{eqn:5.27} can be written in terms of $q_{k+1}=q_k
+\mu_k^2$ as
\begin{equation}\label{eqn:5.28}
\|A_k\|^2=\tfrac{\mu_k^2}{q_kq_{k+1}},\ \
\|B_{k+1}\|^2=\tfrac{1}{q_{k+1}}.
\end{equation}
For $1\leq\alpha<k$, the induction hypothesis and the constancy of $A_\alpha$
further imply that both $B_k$ and $\partial B_k/\partial w_k$ are orthogonal
to $A_\alpha$ and $\mathrm{i}A_\alpha$. Solving \eqref{eqn:5.23}, we have
\begin{equation}\label{eqn:5.29}
\begin{aligned}
&A_ke^{\mathrm{i}(\lambda_k-\mu_k)w_k}
=\tfrac{1}{\mathrm{i}(\lambda_k-2\mu_k)}
\bigl(\tfrac{\partial B_k}{\partial w_k}-\mathrm{i}\mu_kB_k\bigr),\\
&B_{k+1}e^{\mathrm{i}\mu_kw_k}
=\tfrac{1}{\mathrm{i}(\lambda_k-2\mu_k)}
\bigl(\mathrm{i}(\lambda_k-\mu_k)B_k
-\tfrac{\partial B_k}{\partial w_k}\bigr).
\end{aligned}
\end{equation}
Since $\lambda_k-2\mu_k\ne0$, both vectors are complex linear combinations
of $B_k$ and its derivative. Thus $A_k$ and $B_{k+1}$ are Hermitian orthogonal
to every $A_\alpha$ with $\alpha<k$. Together with \eqref{eqn:5.26}, this proves
that $A_1,\ldots,A_k,B_{k+1}$ and their images under multiplication by $\mathrm{i}$
are mutually orthogonal, completing the induction.

For $n=2$, the next formula follows directly from \eqref{eqn:5.15}. For $n\geq3$,
take $k=n-1$ in \eqref{eqn:5.25}. The remaining function $B_n(w_n)$ satisfies
\begin{equation}\label{eqn:5.30}
B_n''-\lambda_n\mathrm{i}B_n'+q_nB_n=0.
\end{equation}
Set
\begin{equation}\label{eqn:5.31}
\delta=\sqrt{\lambda_n^2+4q_n},\ \
r_\pm=\tfrac12(\lambda_n\pm\delta).
\end{equation}
Then $r_+>0>r_-$ and $r_+r_-=-q_n$. Solving \eqref{eqn:5.30}, we obtain
\begin{equation}\label{eqn:5.32}
B_n(w_n)=A_ne^{\mathrm{i}r_+w_n}
+A_{n+1}e^{\mathrm{i}r_-w_n},
\end{equation}
where $A_n,A_{n+1}\in\mathbb C^{n+1}$ are constant vectors. Since $\|B_n\|^2=1/q_n$
is constant and $r_+\ne r_-$, taking real and imaginary parts shows that $A_n$ and
$A_{n+1}$ are Hermitian orthogonal. Their squared norms satisfy the normalization
and Legendrian conditions:
\begin{equation}\label{eqn:5.33}
\|A_n\|^2+\|A_{n+1}\|^2=\tfrac{1}{q_n},\ \
r_+\|A_n\|^2+r_-\|A_{n+1}\|^2=0.
\end{equation}
Consequently,
\begin{equation}\label{eqn:5.34}
\|A_n\|^2=\tfrac{-r_-}{q_n(r_+-r_-)},\ \
\|A_{n+1}\|^2=\tfrac{r_+}{q_n(r_+-r_-)}.
\end{equation}
For $n=2$, \eqref{eqn:5.19} and \eqref{eqn:5.34} give the conclusion below. For
$n\geq3$, it follows from \eqref{eqn:5.21}, \eqref{eqn:5.28} and \eqref{eqn:5.34}.
It is clear that all $A_a$ are nonzero and mutually Hermitian orthogonal, and
$\sum_{a=1}^{n+1}\|A_a\|^2=1$. Set $\alpha_a:=\|A_a\|$ and denote the resulting
phase functions by $x_a$. Therefore, up to a constant unitary transformation,
\begin{equation}\label{eqn:5.35}
F=(\alpha_1e^{\mathrm{i}x_1},\ldots,
\alpha_{n+1}e^{\mathrm{i}x_{n+1}}).
\end{equation}
Here the phase functions are explicitly given by
\begin{equation}\label{eqn:5.36}
\begin{aligned}
x_a&=\sum_{j=1}^{a-1}\mu_jw_j+(\lambda_a-\mu_a)w_a,
\ \ 1\leq a\leq n-1,\\
x_n&=\sum_{j=1}^{n-1}\mu_jw_j+r_+w_n,\ \
x_{n+1}=\sum_{j=1}^{n-1}\mu_jw_j+r_-w_n.
\end{aligned}
\end{equation}
Note that each complex coordinate has constant modulus $\alpha_a$, as in the
weighted Clifford torus. For every tangent vector $X$, we have $\langle F_*X,
\mathrm{i}F\rangle=\sum_{a=1}^{n+1}\alpha_a^2X(x_a)$. As $\xi=-\mathrm{i}F$,
the Legendrian condition is therefore equivalent to $\sum_{a=1}^{n+1}\alpha_a^2
dx_a=0$, stating that $\sum_{a=1}^{n+1}\alpha_a^2x_a$ is constant. Denote this
constant by $c_0$. Since $\sum_{a=1}^{n+1}\alpha_a^2=1$, replacing each $x_a$
by $x_a-c_0$ shows that $\sum_{a=1}^{n+1}\alpha_a^2(x_a-c_0)=c_0-c_0=0$ and
replaces $F$ by $e^{-\mathrm{i}c_0}F$, a constant unitary transformation. We
may therefore assume that $\sum_{a=1}^{n+1}\alpha_a^2x_a=0$. Since $F$ is an
immersion, \eqref{eqn:5.35} implies that the map $(x_1,\ldots,x_{n+1})$ has
rank $n$ and is a local diffeomorphism onto an open subset of $V_\alpha$.
Hence $F$ is locally congruent to $\Psi_\alpha$ in Example \ref{exa:1.2},
completing the proof of Theorem \ref{thm:5.1}.
\end{proof}

\subsection{Completion of the proof of Theorem \ref{thm:1.1}}\label{sect:5.1}~

Let $c$ denote the constant sectional curvature of $M^n$. Assume first that
$c>0$. Since all terms in \eqref{eqn:4.29} are nonnegative, Lemma \ref{lem:4.2}
gives $\tilde K=S=0$. It then follows from \eqref{eqn:4.8} that $\|T\|$ is constant
and hence the second identity in \eqref{eqn:4.27} reduces to $0=2\rho T^\flat$. If
$T$ were nonzero at some point, then $\rho=0$ on a neighborhood where $T\ne0$. The
first identity in \eqref{eqn:4.27} implies that $0=d\rho=-cT^\flat$, a contradiction.
Thus $T=0$ and $K=0$ by the definition of $\tilde K$, which also gives $\rho=0$. The
Gauss equation yields $c=1$, and $M^n$ is locally a totally geodesic Legendrian sphere.

Let $c=0$ and fix $p\in M^n$. Choose the orthonormal basis $\{e_i\}$ in Lemma
\ref{lem:4.3}, for which \eqref{eqn:4.56} holds at $p$. Denote by $\pi_i$ the
orthogonal projection onto $\operatorname{span}\{e_i\}$:
\begin{equation}\label{eqn:5.37}
\pi_i(X)=g(X,e_i)e_i,\ \  X\in T_pM^n.
\end{equation}
Suppose that $\rho(p)\ne0$. According to the normal form \eqref{eqn:4.56},
it satisfies
\begin{equation}\label{eqn:5.38}
P_{e_ie_i}=n\rho(p)\pi_i,\ \
P_{e_ie_j}=0,\ \  \forall\,i\ne j.
\end{equation}
Using $U=X=e_i$ and $Y=e_j$ in \eqref{eqn:4.53}, we further obtain
\begin{equation}\label{eqn:5.39}
[\pi_i,K_{e_j}]=0,\ \ \forall\,i\ne j.
\end{equation}
Fixing $j$, we conclude that $K_{e_j}$ preserves each
$\operatorname{span}\{e_i\}$ with $i\ne j$, so
$K_{e_j}e_i=a_{ij}e_i$ for some $a_{ij}\in\mathbb R$. Since
$K_{e_j}$ is self-adjoint,
\begin{equation}\label{eqn:5.40}
g(K_{e_j}e_j,e_i)=g(e_j,K_{e_j}e_i)=0,\ \ \forall\,i\ne j.
\end{equation}
Consequently, $K_{e_j}$ is diagonal in this same basis for every $j$. By linearity,
$[K_X,K_Y]=0$ for all $X,Y\in T_pM^n$. This contradicts the Gauss equation $[K_X,K_Y]
=-X\wedge Y$, and hence $\rho=0$. This together with \eqref{eqn:4.48} implies that
$\nabla K=0$ and Theorem \ref{thm:5.1} shows that $M^n$ is locally
congruent to a weighted Clifford Legendrian immersion $\Psi_\alpha$
in Example \ref{exa:1.2}.

Finally, the case $c<0$ is excluded by Proposition \ref{pro:4.3}. Conversely,
the totally geodesic Legendrian sphere has $K=0$, while the weighted Clifford
immersions are flat and have $\nabla K=0$ by Proposition \ref{pro:3.1}. Hence
all the stated submanifolds have conformal Maslov form and constant sectional
curvature. This completes the proof of Theorem \ref{thm:1.1}.\qed

\subsection{Completion of the proof of Theorem \ref{thm:1.2}}\label{sect:5.2}~

The tensor $\tilde K$ is parallel by Proposition \ref{pro:4.4}, since $\sec_g\geq0$.
With additional $\sec_g\leq1$, Lemma \ref{lem:4.6} guarantees that $\nabla K=0$.
Thus the second fundamental form is $C$-parallel by \eqref{eqn:2.28} and Lemma
\ref{lem:4.7} shows that $M^n$ is totally geodesic, equivalently $K=0$, or flat.

If $K=0$, the embedded submanifold $M^n$ is open in a totally geodesic Legendrian
sphere. Compactness also makes it closed, so it is the whole sphere.

If $M^n$ is flat, completeness identifies its universal cover with Euclidean space.
A global parallel frame extends the constant coefficient integration in the proof
of Theorem \ref{thm:5.1} to this cover, giving a single representation $\Psi_\alpha$
up to a constant unitary transformation. Every deck transformation preserves each
exponential coordinate of $\Psi_\alpha$. Its phase displacements are continuous $2
\pi\mathbb Z$-valued functions and hence constant, so it is a translation by a period.
Conversely, since $M^n$ is embedded, two points of the universal cover have the same
image under $\Psi_\alpha$ if and only if they lie in the same deck orbit. Thus every
translation by a period is a deck transformation, and the deck group is exactly
$P_\alpha$. Compactness implies that $P_\alpha$ has rank $n$. By Remark \ref{rem:3.1},
$M^n$ is therefore congruent to the closed embedded torus $F_\gamma(T^n_\gamma)$ in
Example \ref{exa:1.3} for some primitive positive integer vector $\gamma$. Finally,
the totally geodesic sphere has $K=T=0$, while Proposition \ref{pro:3.1} and Remark
\ref{rem:3.1} verify the converse for these embedded tori. Taking the trace of
$\nabla K=0$ gives $\nabla T=0$. This completes the proof of Theorem \ref{thm:1.2}.\qed

\vskip3mm\noindent
{\bf Statements and Declarations}

\vskip2mm
\noindent{\bf Competing interests}
The authors declare no conflicts of interest.

\vskip2mm\noindent
{\bf Data availability}
No datasets were generated or analysed during the current study.



\begin{thebibliography}{99}

\bibitem{AL15}
Andrews, B., Li, H.:
Embedded constant mean curvature tori in the three-sphere.
J. Differential Geom. \textbf{99}, 169--189 (2015)

\bibitem{BBK95}
Baikoussis, C., Blair, D.E., Koufogiorgos, T.:
Integral submanifolds of Sasakian space forms $\overline M^7(k)$.
Results Math. \textbf{27}, 207--226 (1995)

\bibitem{Bes87}
Besse, A.L.:
Einstein Manifolds.
Springer-Verlag, Berlin (1987)

\bibitem{Bla10}
Blair, D.E.:
Riemannian Geometry of Contact and Symplectic Manifolds, 2nd edn.
Birkh\"{a}user, Boston (2010)

\bibitem{But09}
Butscher, A.:
Equivariant gluing constructions of contact stationary Legendrian
submanifolds in $\mathbb S^{2n+1}$.
Calc. Var. Partial Differential Equations \textbf{35}, 57--102 (2009)

\bibitem{Bre13}
Brendle, S.:
Embedded minimal tori in $\mathbb S^3$ and the Lawson conjecture.
Acta Math. \textbf{211}, 177--190 (2013)

\bibitem{Car38}
Cartan, \'{E}.:
Familles de surfaces isoparam\'etriques dans les espaces
\`a courbure constante.
Ann. Mat. Pura Appl. (4) \textbf{17}, 177--191 (1938)

\bibitem{Cas96}
Castro, I.:
Lagrangian surfaces with conformal Maslov form. In: Geometry and Topology of Submanifolds, VIII
(Brussels, 1995/Nordfjordeid, 1995), pp. 46--55. World Sci. Publishing, River Edge, NJ (1996)

\bibitem{CMU01}
Castro, I., Montealegre, C.R., Urbano, F.:
Closed conformal vector fields and Lagrangian submanifolds in complex space forms.
Pacific J. Math. \textbf{199}, 269--302 (2001)

\bibitem{CU93}
Castro, I., Urbano, F.:
Lagrangian surfaces in the complex Euclidean plane with conformal Maslov form.
Tohoku Math. J. (2) \textbf{45}, 565--582 (1993)

\bibitem{CD12}
Chao, X., Dong, Y.:
Rigidity theorems for Lagrangian submanifolds of complex space forms
with conformal Maslov form. In: Recent Developments in Geometry and Analysis,
Adv. Lect. Math. (ALM), vol. 23, pp. 17--25. Int. Press, Somerville, MA (2012)

\bibitem{CHH21}
Cheng, X., He, H., Hu, Z.:
$C$-totally real submanifolds with constant sectional curvature in the
Sasakian space forms.
Results Math. \textbf{76}, Article No. 144, 16 pp. (2021)

\bibitem{CH23}
Cheng, X., Hu, Z.:
On $C$-totally real submanifolds of $\mathbb S^{2n+1}(1)$ with
non-negative sectional curvature.
Kodai Math. J. \textbf{46}, 184--206 (2023)

\bibitem{CHY19}
Cheng, X., Hu, Z., Yao, Z.:
A rigidity theorem for centroaffine Chebyshev hyperovaloids.
Colloq. Math. \textbf{157}, 133--141 (2019)

\bibitem{CLH26}
Cheng, X., Li, Q., Hu, Z.:
Legendrian submanifolds of $\mathbb{S}^{2n+1}(\widetilde c)$ with constant sectional
curvature and conformal Maslov form.
Acta Math. Sin. (Engl. Ser.) \textbf{42}, 1899--1920 (2026)

\bibitem{CDK70}
Chern, S.S., do Carmo, M., Kobayashi, S.:
Minimal submanifolds of a sphere with second fundamental form of constant length.
In: Browder, F.E. (ed.) Functional Analysis and Related Fields,
pp. 59--75. Springer, Berlin (1970)

\bibitem{CCJ07}
Cecil, T.E., Chi, Q.S., Jensen, G.R.:
Isoparametric hypersurfaces with four principal curvatures.
Ann. of Math. (2) \textbf{166}, 1--76 (2007)

\bibitem{DV90}
Dillen, F., Vrancken, L.:
$C$-totally real submanifolds of Sasakian space forms.
J. Math. Pures Appl. (9) \textbf{69}, 85--93 (1990)

\bibitem{HLWZ20}
Hu, Y., Li, H., Wei, Y., Zhou, T.:
Contraction of surfaces in hyperbolic space and in sphere.
Calc. Var. Partial Differential Equations \textbf{59},
Article No. 172, 32 pp. (2020)

\bibitem{HLX22}
Hu, Z., Li, M., Xing, C.:
On $C$-totally real minimal submanifolds of the Sasakian space forms
with parallel Ricci tensor.
Rev. R. Acad. Cienc. Exactas F\'is. Nat. Ser. A Mat. RACSAM \textbf{116},
Article No. 163, 25 pp. (2022)

\bibitem{Imm08}
Immervoll, S.:
On the classification of isoparametric hypersurfaces with four
distinct principal curvatures in spheres.
Ann. of Math. (2) \textbf{168}, 1011--1024 (2008)

\bibitem{Law69}
Lawson, H.B.:
Local rigidity theorems for minimal hypersurfaces.
Ann. of Math. (2) \textbf{89}, 187--197 (1969)

\bibitem{LW01}
L\^e, H.V., Wang, G.:
A characterization of minimal Legendrian submanifolds in $\mathbb S^{2n+1}$.
Compositio Math. \textbf{129}, 87--93 (2001)

\bibitem{LLV20}
Lee, J.W., Lee, C.W., V\^{\i}lcu, G.-E.:
Classification of Casorati ideal Legendrian submanifolds in Sasakian space forms.
J. Geom. Phys. \textbf{155}, Article No. 103768, 13 pp. (2020)

\bibitem{LXY26}
Li, C., Xing, C., Yin, J.:
On conformally flat minimal Legendrian submanifolds in the unit sphere.
Proc. Roy. Soc. Edinburgh Sect. A \textbf{156}, 284--313 (2026)

\bibitem{Luo17}
Luo, Y.:
On Willmore Legendrian surfaces in $\mathbb S^5$ and the contact
stationary Legendrian Willmore surfaces.
Calc. Var. Partial Differential Equations \textbf{56},
Article No. 86, 19 pp. (2017)

\bibitem{Luo18}
Luo, Y.:
Contact stationary Legendrian surfaces in $\mathbb S^5$.
Pacific J. Math. \textbf{293}, 101--120 (2018)

\bibitem{LS22}
Luo, Y., Sun, L.:
Rigidity of closed CSL submanifolds in the unit sphere.
Ann. Inst. H. Poincar\'e C Anal. Non Lin\'eaire {\bf40}, 531--555 (2023)

\bibitem{LSY22}
Luo, Y., Sun, L., Yin, J.:
An optimal pinching theorem of minimal Legendrian submanifolds in the unit sphere.
Calc. Var. Partial Differential Equations {\bf61}, Paper No. 192, 18 pp. (2022)

\bibitem{Mih17}
Mihai, I.:
On the generalized Wintgen inequality for Legendrian submanifolds
in Sasakian space forms.
Tohoku Math. J. (2) \textbf{69}, 43--53 (2017)

\bibitem{MN14}
Marques, F.C., Neves, A.:
Min-max theory and the Willmore conjecture.
Ann. of Math. (2) \textbf{179}, 683--782 (2014)

\bibitem{Pit05}
Piti\c{s}, G.:
Integral submanifolds with closed conformal vector field in Sasakian manifolds.
New York J. Math. \textbf{11}, 157--170 (2005)

\bibitem{Ros85}
Ros, A.:
A characterization of seven compact Kaehler submanifolds by
holomorphic pinching.
Ann. of Math. (2) \textbf{121}, 377--382 (1985)

\bibitem{Sas14}
Sasahara, T.:
A class of biminimal Legendrian submanifolds in Sasakian space forms.
Math. Nachr. \textbf{287}, 79--90 (2014)

\bibitem{Sim68}
Simons, J.:
Minimal varieties in Riemannian manifolds.
Ann. of Math. (2) \textbf{88}, 62--105 (1968)

\bibitem{SS20}
Sun, J., Sun, L.:
Sphere theorems for Lagrangian and Legendrian submanifolds.
Calc. Var. Partial Differential Equations \textbf{59},
Article No. 125, 29 pp. (2020)

\bibitem{Urb86}
Urbano, F.:
Nonnegatively curved totally real submanifolds.
Math. Ann. \textbf{273}, 345--348 (1986)

\bibitem{WX26}
Wang, X., Xing, C.:
Conformally flat Lagrangian submanifolds of complex projective space
with parallel mean curvature vector.
Submitted

\bibitem{YKI76}
Yamaguchi, S., Kon, M., Ikawa, T.:
$C$-totally real submanifolds.
J. Differential Geom. \textbf{11}, 59--64 (1976)

\bibitem{Yan57}
Yano, K.:
The Theory of Lie Derivatives and its Applications.
North-Holland, Amsterdam (1957)

\bibitem{YQ22}
Yin, J., Qi, X.:
Sharp estimates for the first eigenvalue of Schr\"odinger operator in the unit sphere.
Proc. Amer. Math. Soc. \textbf{150}, 3087--3101 (2022)

\end{thebibliography}
\end{document}